\newtheorem{theorem}{Theorem}[section]
\newtheorem{lemma}[theorem]{Lemma}
\newtheorem{cor}[theorem]{Corollary}
\numberwithin{equation}{section}
\begin{document}

\title{Self-similar groups of type $FP_n$}

\author{Dessislava H. Kochloukova, Said N. Sidki }

\address{State University of Campinas (UNICAMP), Campinas, Brazil; Federal University of Bras\'ilia (unB), Bras\'ilia, Brazil} 
\email{}

\thanks{The first names author was supported by  "bolsa de produtividade em pesquisa" CNPq 303350/2013-0, Brazil and by grant "projeto regular" FAPESP 2016/05678-3, Brazil. The second named author was supported by  grant FAPESP 2016/05271-0, Brazil for a month visit to UNICAMP in September, 2016.}


\date{}

\keywords{}

\begin{abstract} We construct new classes of self-similar groups :  $S$-aritmetic groups, affine groups and metabelian groups. Most of the soluble ones are finitely presented and of type $FP_n$ for appropriate $n$. 
\end{abstract}

\maketitle

\section{Introduction} In this paper we construct new classes of self-similar groups : $S$-aritmetic groups, affine groups and metabelian groups. 
To fix the terminology,  a group $G$ is self-similar if  it is a state closed  subgroup of the automorphism group of an infinite regular one-rooted  $m$-tree ${\mathcal T}_m$ (i.e. every vertex has precisely $m$ descendents) that preserves the levels of the tree. And if the action of $G$ on the first level of ${\mathcal T}_m$ is transitive we say that $G$ is a transitive self-similar group . Obviously if $m = 2$ every nontrivial self-similar group is a transitive self-similar group but if $m \geq 3$ there are self-similar groups that are not transitive self-similar. A group acting on the tree ${\mathcal T}_m$ is finite-state provided each of its elements has a finite number of states. An  automata group is a finitely generated self-similar finite-state group. We say that $G$ is a transitive automata group if it is transitive as a self-similar group. We use virtual
endomorphisms introduced by Nekrashevych and Sidki in \cite{Nekra}, \cite{N-S} to establish the
classes of groups we consider are transitive self-similar

There is an extensive literature on self-similar groups and the subclass of automata groups (i.e. finitely generated, finite state, self-similar groups) including the Grigorchuk group \cite{Gri}, the Gupta-Sidki group \cite{Gu-S}, abelian groups \cite{BruSid}, finitely generated nilpotent groups \cite{Ber-Sidki}, soluble groups \cite{BarthSunik} and
 aritmetic groups \cite{Kapo}. For a good survey on this topic the reader is referred to the book  \cite{Nekr}.

 Most of the self-similar groups considered in the literature are not finitely presented, while in this paper most of the new classes of transitive self-similar groups we construct  are finitely presented and of homological type $FP_n$ for appropriate $n$. Examples of  metabelian self-similar groups with higher finiteness homological properties  were first  introduced by Bartholdi, Neuhauser and Woess in \cite{BNW}. These  examples  now belong to the class
 of S-aritmetic nilpotent-by-abelian groups defined in
   Theorem A. 

Recall from homological group theory that a group $G$ has a homological type $FP_n$ if the trivial $\mathbb{Z} G$-module $\mathbb{Z}$ has a projective resolution with all projectives finitely generated in dimension $\leq n$. A group is of type $FP_1$ if and only if it is finitely generated. Every finitely presented group is of type $FP_2$ but as shown in \cite{BB} the converse does not hold. 
It is easy to see that the property $FP_n$ is invariant under going up and down by finite index, this and more properties  of groups of type $FP_n$ can be found in  \cite{Bieri}, \cite{Brown}.  We rely on already existing
results in the literature on higher finiteness homological properties to check the groups in question are of type $FP_n$  and finitely presented.

In Section \ref{S-arithmetic} we construct a class of $S$-aritmetic  self-similar groups. Given a commutative algebra $A$ we denote  the  upper triangular $m \times m$ matrix group, or Borel subgroup, with coefficients in $A$ by $U(m, A)$ and its projective quotient by $PU(m, A)$ which is nilpotent-by-abelian.

\medskip
{\bf Theorem A} {\it Let $n \geq 1$ be an integer, $p$ be a prime number and $$A = \mathbb{F}_p[x^{\pm 1}, {1 \over {f_1}}, \ldots , {1 \over {f_{n-1}}}]$$
the subring of ${ \mathbb F}_p(x)$, where $f_0 = x, f_1, \ldots , f_{n-1} \in \mathbb{F}_p[x] \setminus \mathbb{F}_p (x-1)$ are pairwise different, monic, irreducible polynomials.  Then $PU(m,A)$ is 
transitive, finite-state and state-closed of degree $p^{l}$, where $l$ is a cubic polynomial of $m$. 
}

\medskip
The group $G = PU(m,A)$ considered in Theorem A is of type $F_{n}$ (i.e. is finitely presented if $n \geq 2$ and of homological type $FP_{n}$) but not of type $FP_{n+1}$. This follows from the fact that  $U(m, A)$ has these properties as  shown by Bux in \cite{Bux} using the theory of buildings and  by the fact that the center of $U(m,A)$ is a finitely generated abelian group, so is of type $FP_{\infty}$ ( i.e. is $FP_s$ for every $s$).

  In Section \ref{sectionN-S} we introduce a positive characteristic version of the affine linear groups $\mathbb{Z}^n \rtimes B(n, \mathbb{Z})$ considered by Nekrashevych and Sidki in \cite{N-S}, where
$B(n, \mathbb{Z})$ is the (pre-Borel) subgroup of finite index in  $GL_n(\mathbb{Z})$ consisting of  the matrices whose entries above the main diagonal are even. They showed that these affine groups are realizable as transitive automata groups acting on the binary tree. We define here similarly
$B(n, \mathbb{F}_p[x])$ as the subgroup of $GL_n(\mathbb{F}_p[x])$ consisting of  the matrices whose entries above the main diagonal belong to the ideal  $(x-1) \mathbb{F}_p[x]$.

\medskip
{\bf Theorem B} {\it Let $n \geq 3$ be an integer, $p$ a prime number and $G$ the affine group  $ \mathbb{F}_p[x]^n \rtimes B(n, \mathbb{F}_p[x])$.
Then $G$ is transitive, finite-state and state-closed of degree $p$.}

\medskip
 For $n = 2$ the group $G$ in Theorem B is state closed of degree $p$ but not finitely generated since 
by a result of Nagao \cite{Nag} $SL_2(\mathbb{F}_p[x])$ is not finitely generated.  The fact that $GL_n( F_p[x])$ is finitely generated for $n \geq 3$ follows from work of Behr in \cite{Behr} and  this implies that $G$ is finitely generated for $n \geq 3$.  
As for the higher finiteness homological properties, Abramenko proved in \cite{Abr} that $SL_n(\mathbb{F}_q[x])$ is $FP_{n-2}$ if ${\mathbb F}_q$ is the finite field of order $q = p^s$ for sufficiently large $s$. The restriction on $s$ was removed by Bux, K\"ohl, Witzel in \cite{Bux-K-W}. 

Note that in both Theorem A and Theorem B the polynomial $x-1$ can be substituted with an irreducible, monic polynomial $g \in \mathbb{F}_p[x]$. In Theorem A we need the additional condition that $g$ be coprime to the polynomials $f_0, \ldots, f_n$. In this case the degree of the representation is $p^l$, where $l$ is a cubic polynomial in $m$ and whose coefficients depend on $deg(g)$. Furthermore, the degree of the representation of $G$ in Theorem B becomes $p^{deg(g)}$. 

The rest of the paper is devoted  to the class of metabelian groups. It is worth noting that not every finitely generated metabelian group is transitive self-similar. In \cite{Alex-Said2} Dantas and Sidki showed that  $\mathbb{Z} \wr \mathbb{Z}$ is not a transitive self-similar group.
In Section \ref{Construction} we  make  explicit calculations about the states of some generating elements of a metabelian self-similar group that has finite index in $PU(2, A)$ from Theorem A. 

In Section \ref{Krull-dim} we take up the study of self-similar representations of metabelian groups $B \rtimes Q$, where $B$ and $Q$ are abelian and  $B$ has 
 Krull dimension 1 as $\mathbb{Z} Q$-module via conjugation. 

\medskip
{\bf Theorem C} {\it 
Let $Q$ be a finitely generated abelian group and  $B$ be a finitely generated, right $\mathbb{Z} Q$-module of Krull dimension 1  such that $C_Q(B) = \{ q \in Q \mid B(q-1) = 0 \} = 1$. Then $G = B \rtimes Q$ is a transitive self-similar group. }

\medskip
 The proof of Theorem C uses methods from commutative algebra. The general strategy of the proof 
goes as follows: given $\delta$ in $\mathbb{Z} Q$, we may form the subgroup $H=(\delta B) \rtimes Q$.  We show the existence of $\delta$ in $\mathbb{Z} Q$ such that $\delta B$ is of finite index in $B$ and such that the map $f(\delta b) = b$ for all $b$ in $B$ together with  $f|_Q=id_Q$ defines a simple virtual endomorphism $f : H \to G$ of $G$. With this, we have the extra nice property that $f$ is an isomorphism between the subgroup $H$ of finite index in $G$ and $G$ itself.
Note that by the main result of Dantas and Sidki \cite{Alex-Said2}, Theorem C does not hold for metabelian groups, where the Krull dimension of $B$ is bigger than 1.

Under certain additional conditions the group $G$ from Theorem C is finitely presented and of type $FP_m$.  In Section \ref{section-meta} we recall the Bieri-Strebel  theory of $m$-tame modules and the  $FP_m$-Conjecture for metabelian groups.
If in Theorem C, the subgroup $B$ is $m$-tame as $\mathbb{Z}Q$-module and is either  ${\mathbb Z}$-torsion free or of finite exponent, then $G$ is a transitive self-similar metabelian group of type $FP_m$, since in both cases the $FP_m$-Conjecture for metabelian groups holds by \cite{Aberg} and \cite{Desi}. 

Both Theorem A and Theorem C generalize the automata group considered by Bartholdi, Neuhauser and Woess in
 \cite[p.~16]{BNW}. The Cayley graph of this group is a Diestel-Leader graph i.e.  is the horocyclic product of several homogeneous trees. We note that the idea of horocyclic product  of trees was used earlier in \AA{}berg's paper on metabelian groups of 1986  with different terminology  \cite{Aberg}.

In Section \ref{local}  using localization technique from commutative algebra we embed  the transitive self-similar group $G = C_p \wr \mathbb{Z}^d$, constructed by  Dantas and Sidki in  \cite{Alex-Said}, in a transitive self-similar group $\widetilde{G}$, whose definition depends on a choice of a polynomial $g$.   This new class  of metabelian transitive self-similar groups  $\widetilde{G}$ is of the form $\widetilde{A} \rtimes \widetilde{Q}$ and the Krull dimension of $\widetilde{A}$ is $d$, so it can be arbitrary large, contrary to the class of groups considered in Theorem C, where the Krull dimension is 1.

\medskip
{\bf Theorem D} {\it Let $p$ be a prime number, $g \in \mathbb{F}_p[x] \setminus ( \cup_{j \geq 0} \mathbb{F}_p x^j  \cup (x-1) \mathbb{F}_p[x])$ and $\widetilde{G}$ be the group $  \widetilde{A} \rtimes \widetilde{Q}$, where  $$\widetilde{A} = \mathbb{F}_p[x_1^{\pm 1}, {1 \over g(x_1)}] \otimes_{\mathbb{F}_p} \ldots \otimes_{\mathbb{F}_p} \mathbb{F}_p[x_d^{\pm 1}, {1 \over g(x_d)}]$$   and $$\widetilde{Q} = \mathbb{Z}^{2d} = \langle x_1, \ldots, x_d, y_1, \ldots, y_d \rangle$$ and $y_i$ and $x_i$ act on $\widetilde{A}$ (via conjugation) as multiplication by $g(x_i)$ and $x_i$ respectively. Then $\widetilde{G}$ is a transitive self-similar group. }

\medskip
Note that for special choices of the polynomial $g$, for example irreducible and coprime to $x$, the group $\widetilde{G}$ from Theorem D is finitely presented; this follows from the Bieri-Strebel classification of finitely presented metabelian
groups in \cite{BS}, see Theorem \ref{cond-met}.  But since $\widetilde{A}$ is never 3-tame as $\mathbb{Z} \widetilde{Q}$-module, the group $\widetilde{G}$ is never of type $FP_3$.

\section{Preliminaries}

  \subsection{Preliminaries on transitive self-similar groups and virtual endomorphisms} 

Given a homomorphism $f : H \to G$ from a subgroup $H$ of finite index in $G$, we call $f$ a virtual endomorphism. 
  Now we explain how the existence of a simple virtual endomorphism $$f : H \to G$$ produces a faithful, state-closed action of $G$ on the infinite regular one-rooted  $m$-tree ${\mathcal T}_m$, where $m = [G : H]$ and this action is transitive on the first level of the tree.  The endomorphism is simple if every normal subgroup $K$ of $G$ contained in $H$ and such that $f(K) \subseteq K$ is trivial. This idea was first used in \cite{N-S} and later on was applied several times to construct transitive self-similar groups in \cite{Ber-Sidki}, \cite{Alex-Said}, \cite{Alex-Said2}.
  
  Let $T = \{ t_0, \ldots, t_{m-1} \} $ be a right transversal  of $H$ in $G$ i.e. $G$ is the disjoint union $\cup_{t \in T} H t$. Note that $G$ acts  on the right coset  classes of $H$ in $G$ by right multiplication.  Given $g \in G$ we can define recursively the action of $g$ on ${\mathcal T}_m$ as
\begin{equation} \label{form}
g = (g_0, \ldots, g_{m-1}) \sigma \in G \wr S_m,
\end{equation}
where $\sigma$ is a permutation of the set $\{ 0, 1, \ldots, m-1 \}$ defined by the permutation action of $g$ on the coset classes i.e.  $H t_i g = H t_{(i)\sigma}$  and 
 we call $$t_0 g t_{(0)\sigma}^{-1}, \ldots,  t_{m-1} g t_{(m-1)\sigma}^{-1}$$ the cofactors of $g$.
The states $g_0, \ldots, g_{m-1}$ are defined  by $$g_i = f( t_i g t_{(i)\sigma}^{-1}) \in G, \hbox{ for every }i \in X = \{ 0, \ldots, m-1 \}.$$ 
This is a recursive definition and we can repeat this process with each state $g_0, \ldots, g_{m-1}$.
The product of elements written as in (\ref{form}) is given by the following formula
\begin{equation} \label{product}
(g_0, \ldots, g_{m-1}) \sigma (\tilde{g}_0, \ldots, \tilde{g}_{m-1}) \tilde{\sigma} = (g_0 \tilde{g}_{(0) \sigma}, g_1 \tilde{g}_{(1) \sigma}, \ldots, g_{m-1} \tilde{g}_{(m-1)\sigma})  \sigma \tilde{\sigma},
\end{equation}
where $\sigma, \tilde{\sigma} \in Perm(X)$. The degree of this presentation of $G$ as a self-similar group is the index $[G : H]$.

The converse is also true. If $G$ is a transitive self-similar group i.e.  it has a faithful, state-closed action  on the tree ${\mathcal T}_m$  and the action of $G$ on the vertices that are descendants of the root i.e. the set $X = \{0, 1, \ldots, m-1 \}$ is transitive, then we can define a subgroup $H$ of finite index in $G$  as the the stabilizer of $0 \in X$ in $G$. As before there is a recursive description  of the elements $g$ of $G$
\begin{equation} \label{recursive23} 
g = (g_0, g_1, \ldots, g_{m-1}) \sigma,
\end{equation}
where $\sigma$ is a permutation from $S_m$ and $g_0, \ldots, g_{m-1}$ are elements of $G$, with each $g_i$ acting on the subtree of ${\mathcal T}_m$  headed by the vertex $i \in X$. Thus $g \in H$ precisely when $(0)\sigma = 0$. Then the virtual endomorphism  $f : H \to G$ can be defined by  $f(g) = g_0$. 

Finally for $g$ as in (\ref{recursive23}) we define recursively the set of states  ${\mathcal Q}(g)$ of the elements $g$ as  the  subset of $Aut({\mathcal T}_m)$ given by
$$
{\mathcal Q}(g) =  \{ g \} \cup \cup_{0 \leq i \leq m-1} {\mathcal Q}(g_i).
$$
 A self-similar group  $G$ i.e. a state closed subgroup of $Aut({\mathcal T}_m)$ is an automata group   (or a finite state self-similar group) if there is a finite generating set $X$ of $G$ such that for every $g \in X$ we have that ${\mathcal Q}(g)$ is finite. If furthermore in the definition of automata group $G$ is transitive self-similar, we say that $G$ is a transitive automata group.  
\subsection{Preliminaries on metabelian groups of type $FP_m$} \label{section-meta}

Several of our examples are metabelian groups i.e. there is a short exact sequence $A \to G \to Q$ such that  $G$ is finitely generated and $A$ and $Q$ are abelian. Then we can view $A$ as a right $\mathbb{Z} Q$-module via conjugation and $A$ is finitely generated as $\mathbb{Z} Q$-module since $Q$ is finitely presented. The set $Hom(Q, \mathbb{R}) \setminus \{ 0 \}$ has an equivalence relation $\sim$ given by multiplication with a positive real number i.e. for $\chi_1, \chi_2 \in Hom (Q, \mathbb{R}) \setminus \{ 0 \} $ we have $\chi_1 \sim \chi_2$ if and only if there is a positive real number $r$ such that $\chi_1 = r \chi_2$. 
Define
$$
S(Q) =   Hom(Q, \mathbb{R}) \setminus \{ 0 \} / \sim.
$$ 
Note that for the torsion free rank $n$ of $Q$ we have that $Hom (Q, \mathbb{R}) \simeq \mathbb{R}^n$, hence 
$$
S(Q)  \simeq {S^{n-1}},
$$
where $S^{n-1}$ is the unit sphere in ${\mathbb{R}}^n$. We write $[\chi]$ for the equivalence class of $\chi \in Hom(Q, \mathbb{R}) \setminus \{ 0 \}$ and define the monoid
$$
Q_{\chi} = \{ q \in Q \mid \chi(q) \geq 0 \}.
$$
The Bieri-Strebel invariant defined in \cite{BS} is
$$
\Sigma_A(Q) = \{ [\chi] \mid A \hbox{ is finitely generated as } \mathbb{Z} Q_{\chi}\hbox{-module} \}
$$
and was used in the classification of finitely presented metabelian groups in \cite{BS}.

\begin{theorem} \cite{BS} \label{cond-met}
Let $A \to G \to Q$ be a short exact sequence of groups with $G$ finitely generated, $A$ and $Q$ abelian. Then the following are equivalent :

1) $G$ is finitely presented;

2) $G$ is of homological type $FP_2$;

3) $\Sigma_A(Q) \cup - \Sigma_A(Q) = S(Q)$.
\end{theorem}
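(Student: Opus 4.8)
We outline a proof; this is the theorem of Bieri and Strebel \cite{BS}, and the plan is to recover their argument, organised around the invariant $\Sigma_A(Q)$. The implication $(1)\Rightarrow(2)$ is formal, since every finitely presented group is $FP_2$. Before the two substantive implications one makes the usual reductions: writing $Q=\mathbb Z^n\times T$ with $T$ finite and $\pi\colon G\to Q$ the quotient map, replace $G$ by the finite-index subgroup $\pi^{-1}(\mathbb Z^n)$, so that one may assume $Q=\mathbb Z^n$ is free abelian of rank $n$. This is legitimate because $FP_2$ and finite presentability pass up and down finite-index subgroups, while $S(Q)$ and $\Sigma_A(Q)$ are unaffected --- $T$ contributes nothing to $\mathrm{Hom}(Q,\mathbb R)$, and $\mathbb ZQ_\chi$ is free of rank $\abs{T}$ over $\mathbb Z(\mathbb Z^n)_\chi$, so $A$ is finitely generated over the one ring iff over the other. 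A further reduction lets one work, up to a bounded amount of extra bookkeeping coming from the extension class in $H^2(Q,A)$, in the split case $G=A\rtimes Q$. It remains to prove $(2)\Rightarrow(3)$ and $(3)\Rightarrow(1)$.

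For $(3)\Rightarrow(1)$ I would induct on $n=\mathrm{rk}\,Q$. When $n=1$, write $Q=\langle t\rangle$; condition $(3)$ gives $[\chi]\in\Sigma_A(Q)$ for $\chi$ or $-\chi$, i.e.\ $A$ is finitely generated over $\mathbb ZQ_\chi\cong\mathbb Z[t]$, hence $A$ is generated as a group by a finitely generated --- so, being abelian, finitely presented --- subgroup $A_0$ together with $t$, with conjugation by $t$ mapping $A_0$ into itself. Thus $G=\langle A_0,t\rangle$ is an ascending HNN extension of the finitely presented group $A_0$, so is finitely presented. For the inductive step one uses condition $(3)$, the openness of $\Sigma_A(Q)$ and the compactness of the sphere $S^{n-1}$ to choose a discrete character $\chi\colon Q\twoheadrightarrow\mathbb Z$ with kernel $Q_1$ of rank $n-1$, together with a finitely generated subgroup $N_1=A_1\rtimes Q_1$ --- $A_1$ a suitable finitely generated $\mathbb ZQ_1$-submodule of $A$ --- such that $G$ is an ascending HNN extension, or more generally an amalgam of such pieces, over $N_1$, and such that the hypothesis of the theorem again holds for the pair $(A_1,Q_1)$. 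By induction $N_1$ is finitely presented; since an HNN extension, or an amalgamated free product, of finitely presented groups over finitely generated subgroups is finitely presented, so is $G$.

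For $(2)\Rightarrow(3)$ I would argue contrapositively: suppose $[\chi],[-\chi]\notin\Sigma_A(Q)$, so $A$ is finitely generated over neither $\mathbb ZQ_\chi$ nor $\mathbb ZQ_{-\chi}$. Being $FP_2$ is equivalent to the relation module $\overline R=R/[R,R]$ of a presentation $1\to R\to F\to G\to1$ (with $F$ finitely generated free) being finitely generated as a $\mathbb ZG$-module. For metabelian $G=A\rtimes Q$, the Lyndon--Hochschild--Serre machinery produces an exact sequence of $\mathbb ZQ$-modules relating the $A$-coinvariants of $\overline R$ to the augmentation ideal of $A$ and to the exterior square $\Lambda^2_{\mathbb ZQ}A$ --- the $G$-action on these abelian subquotients factoring through $Q$ --- so finite generation of $\overline R$ over $\mathbb ZG$ forces a corresponding finiteness over $\mathbb ZQ$. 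Pulling everything back along a discrete character approximating $\chi$ reduces matters to a $\mathbb Z[t^{\pm1}]$-module finitely generated over neither $\mathbb Z[t]$ nor $\mathbb Z[t^{-1}]$, and one then exhibits inside $\overline R$ a strictly increasing chain of $\mathbb ZG$-submodules, contradicting finite generation. Morally, this says that $\mathbb ZQ$ is Noetherian only in the directions coming from $\Sigma_A(Q)$.

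The main obstacle is the inductive step of $(3)\Rightarrow(1)$: producing the right graph-of-groups decomposition and, above all, arranging that hypothesis $(3)$ descends along it to a hyperplane datum $(A_1,Q_1)$ for a well-chosen discrete character $\chi$. This is considerably harder than the rank-one base case, and is precisely where Bieri and Strebel's valuation calculus --- and the compactness of the sphere, used to upgrade a pointwise statement to a global one --- does its work; one must also check that the associated (edge) subgroups of the resulting HNN extensions and amalgams are finitely generated, not merely that $A_1$ is finitely generated over $\mathbb ZQ_1$. The homological identification in $(2)\Rightarrow(3)$ --- pinning down $\overline R$ and the relevant subquotients precisely enough to locate the ascending chain --- is a secondary difficulty, though comparatively routine once the correct exact sequence is in place.
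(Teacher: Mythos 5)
The paper does not prove this theorem: it is quoted from Bieri and Strebel \cite{BS} and used throughout as a black box, so there is no in-paper proof to compare against. Judged on its own, your sketch gets the easy reductions and the rank-one base case right, but both substantive implications contain genuine gaps, and neither matches the actual route in \cite{BS}.

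The main gap is the inductive step of $(3)\Rightarrow(1)$. You want a corank-one subgroup $Q_1\le Q$ and a finitely generated $\mathbb ZQ_1$-module $A_1\le A$ such that the $2$-tameness hypothesis descends to $(A_1,Q_1)$, but there is no general mechanism for this: $\Sigma_{A_1}(Q_1)$ is not the restriction of $\Sigma_A(Q)$ along $Q_1\hookrightarrow Q$, and it is not even clear which $A_1$ to take nor that it is finitely generated over $\mathbb ZQ_1$. Bieri and Strebel do not induct on $\mathrm{rk}\,Q$; instead they cover $S(Q)$ by finitely many open half-spheres centred at rational points of $\Sigma_A(Q)$, use that finite cover to write down a concrete finite presentation (generators a finite $\mathbb ZQ_{\chi_i}$-generating set of $A$ for each of the finitely many $\chi_i$, together with $Q$, plus conjugation relations), and then verify the presentation by a valuation argument; the compactness of $S(Q)$ buys precisely the finite cover, not an induction. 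Your sketch of $(2)\Rightarrow(3)$ is also not the route in \cite{BS}: they fix a discrete character $\chi$, pass to the corank-one quotient $G/\ker(\chi\circ\pi)\cong\mathbb Z$, and invoke their HNN structure theorem for finitely presented groups mapping onto $\mathbb Z$ with no nonabelian free subgroup, which forces an ascending HNN decomposition in one of the two directions; this is exactly $[\chi]\in\Sigma_A(Q)$ or $[-\chi]\in\Sigma_A(Q)$, and density of rational characters plus openness of $\Sigma_A(Q)$ extends the conclusion to all of $S(Q)$. Your relation-module argument might be salvageable, but as written the crucial exact sequence and the location of the ascending chain are unspecified, so this remains an idea rather than a proof.
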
  

We denote the complement $S(Q) \setminus \Sigma_A(Q)$ as $\Sigma_A^c(Q)$ and say that $A$ is $m$-tame as $\mathbb{Z} Q$-module if for every $[\chi_1], \ldots, [\chi_m] \in \Sigma_A^c(Q)$, not necessary different, we have $\chi_1 + \ldots + \chi_m \not= 0$. 
The last condition of Theorem \ref{cond-met} is equivalent to $A$ being 2-tame as $\mathbb{Z} Q$-module. The following conjecture suggests when a metabelian group is of type $FP_m$.

\medskip
{\bf The $FP_m$-Conjecture for metabelian groups}
{\it Let $A \to G \to Q$ be a short exact sequence of groups with $G$ finitely generated, $A$ and $Q$ abelian. Then $G$ is of type $FP_m$ if and only if $A$ is $m$-tame as $\mathbb{Z} Q$-module.}

\medskip
We state below cases when the $FP_m$-Conjecture  holds.
Recall that a group $G$ is of finite Pr\"ufer rank if there is an integer $d$ such that every finitely generated subgroup can be generated by at most $d$ elements. If $G$ is an extension of $A$ by $Q$ with both $A$ and $Q$ abelian and $G$ finitely generated it is easy to see that $G$ is of finite Pr\"ufer rank precisely when $dim_{\mathbb{Q}} (A \otimes_{\mathbb{Z}} \mathbb{Q}) < \infty$ and the torsion part $tor(A)$ of $A$ is finite.    

 \begin{theorem} \label{fpm-met} The $FP_m$-Conjecture holds when

\medskip
1. $G$ is of finite Pr\"ufer rank;

or 

2. $A$ is of finite exponent and of Krull dimension 1.
\end{theorem}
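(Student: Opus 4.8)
The statement is not proved here from scratch: part~(1) is \AA{}berg's theorem from \cite{Aberg}, part~(2) is proved by the first author in \cite{Desi}, and the role of Theorem~\ref{fpm-met} is only to isolate the two instances of the $FP_m$-Conjecture that we shall invoke. So the plan is to recall the shape of these two arguments. In each case the implication that matters is ``$A$ is $m$-tame as $\mathbb Z Q$-module $\Rightarrow$ $G$ is of type $FP_m$'', the reverse implication being the direction of the conjecture that holds without extra hypotheses, via the Bieri--Strebel and Bieri--Renz theory of homological $\Sigma$-invariants. In each case one produces a projective resolution of the trivial module, equivalently a contractible complex on which $G$ acts, whose finiteness in dimensions $\le m$ is governed precisely by $m$-tameness, in the spirit of the Bieri--Strebel proof of Theorem~\ref{cond-met}.

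For~(1), one uses that $G$ of finite Pr\"ufer rank is equivalent to $\dim_{\mathbb Q}(A \otimes_{\mathbb Z} \mathbb Q) < \infty$ together with $tor(A)$ finite, so that $A$ embeds in a finite-dimensional module over a localisation of $\mathbb Z Q$ at finitely many rational primes. To each $[\chi] \in \Sigma_A^c(Q)$ one attaches a discrete valuation and an associated tree, and one forms the horocyclic product of a finite family of such trees --- the Diestel--Leader type complex recalled in the Introduction, which in \cite{Aberg} appears in different terminology. The group $G$ acts on this complex cocompactly with cell stabilisers of type $FP_\infty$, and the $m$-tameness of $A$ is exactly the condition that makes the complex $(m-1)$-acyclic; a standard equivariant spectral sequence argument \cite{Brown} then yields $FP_m$.

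For~(2), finite exponent allows one to localise at a single prime $p$ and regard $A$ as a module over the group algebra $\mathbb F_p Q$ of the free abelian group $Q$; Krull dimension $1$ means that, after passing to a submodule of finite index and factoring out a finite submodule, $A$ is finitely generated over a one-variable polynomial subring $\mathbb F_p[t] \subseteq \mathbb F_p Q$, hence (as $\mathbb F_p[t]$ is a principal ideal domain) has a very explicit module structure. This normal form is used in \cite{Desi} to build by hand a projective $\mathbb F_p G$-resolution of $\mathbb F_p$ --- or, equivalently, to run a Bieri--Stallings/Brown filtration --- in which $m$-tameness is what keeps the modules occurring in dimensions $\le m$ finitely generated; passing from coefficients $\mathbb F_p$ back to $\mathbb Z$ is routine since $A$ has finite exponent.

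The technical core, and the step at which I expect one would get stuck in reconstructing either proof, is the conversion of $m$-tameness into a connectivity or finite-generation statement. In case~(1) this is the proof that the horocyclic product complex is $(m-1)$-connected precisely when $A$ is $m$-tame --- a delicate combinatorial estimate on the valuation data, already the crux of Theorem~\ref{cond-met} for $m=2$. In case~(2) it is the module-theoretic analogue: establishing the structure theorem for Krull-dimension-$1$ modules over $\mathbb F_p Q$ and showing that $m$-tameness bounds the finite-generation defect of the homology modules along the chosen filtration.
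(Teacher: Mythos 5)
Your proposal correctly identifies that this theorem is established in the paper purely by citation, exactly as the paper does: part (1) is \AA{}berg \cite{Aberg}, part (2) is \cite{Desi}, and no proof is given in the text. The additional sketch you supply of the two underlying arguments is consistent with the background the paper itself records in Subsection~\ref{trees} (valuation trees, horocyclic product, Brown's criterion), and although a couple of your heuristics are loose (e.g.\ finite exponent need not be a single prime power, so one first filters by the primary components, and ``finite over a one-variable polynomial subring'' is a Noether-normalisation shorthand rather than a literal statement), none of this matters for matching the paper's proof, which is only the citation.
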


Part 1  of Theorem \ref{fpm-met}  was proved in \cite{Aberg} and part 2 was proved in \cite{Desi}.

\subsection{Horocyclic product of trees and \AA{}berg's construction} \label{trees}

Here we discuss the link between the construction of horocyclic product of trees and \AA{}berg's construction. The first example  of a self-similar metabelian group of type $FP_n$ was given by Bartholdi, Neuhauser, Woess in \cite{BNW}. The group is
$$
\mathbb{F}_q[x, {1 \over x}, {1 \over {x+1}}, \ldots , {1 \over {x+ q-1}}] \rtimes Q
$$
where $Q = \langle x_0, \ldots , x_{q-1} \rangle \simeq \mathbb{Z}^q$, $x_i$ acts by conjugation via multiplication with $f_i = x+ i$, $q = p_1 \ldots p_r$ is the decomposition as product of powers of different primes, so each $p_i$ is a power of a prime and $\mathbb{F}_q = {\mathbb F}_{p_1} \times \ldots \times {\mathbb F}_{p_r}$, where ${\mathbb F}_{p_i}$ is the field of $p_i$ elements.  
By \cite{BNW} the Cayley graph of $G$ is the horocyclic product of $d = q+1$ homogeneous trees i.e. each tree comes with a height function, the horocyclic product is the set of $d$-tuples in the direct product of the trees whose coordenate heights sum up to zero.

In this subsection we  explain the connection between the idea of horocyclic product of trees for metabelian groups and \AA{}berg's paper \cite{Aberg}, where the $FP_m$-Conjecture for metabelian groups was proved for groups $G$ of finite Pr\"ufer rank.  The main idea of \cite{Aberg} is to consider finitely many trees on which the metabelian group $G$ acts. Each tree is associated to a valuation and the trees are Serre's valuation trees. Each tree is equipped with a height  function and inside the direct product of these trees  is considered the subcomplex $Y$ of points whose height functions sum to  0. It was shown in \cite{Aberg} that  the action of $G$ on $Y$ has the following properties :

  1. the action is combinatorial in the sense that if $g \in G$ fixes a cell of $Y$ then it fixes the cell pointwise;
  
  2. the cell stabilizers are all polycyclic;
  
  3. $G$ acts cocompactly on $Y$ in each dimension i.e. there are finitely many $G$-orbits  of cells in each dimension;
  
  4. if the Bieri-Strebel invariant $\Sigma_A(Q)$ is $m$-tame, then $Y$ is $(m-1)$-connected.
  
   Then by the Brown criterion \cite{Brown1} $G$ is of type $FP_m$.  Indeed, the conditions in the Brown criterion require that the action is combinatorial,  stabilizers of cells of dimension $i$ are of type $FP_{m-i}$ for $0 \leq i \leq m-1$, $G$ acts cocompactly on the $m$-skeleton of $Y$ and the complex $Y$  is $m-1$ connected. Note that the horocyclic product of the trees in the \AA{}berg construction is exactly the 1-skeleton of the complex $Y$.

\subsection{Preliminaries from commutative algebra} \label{comm}

Recall some basic facts from commutative algebra, for more details see \cite[Chapter~3]{eisenbud} or \cite{algebrabook}. 
Let $S$ be a commutative Noetherian ring with unit.
Then $S$ has finitely many minimal prime ideals and its intersection is the nilpotent radical $\sqrt{0} = \{ s \in S \mid s^n = 0 \hbox{ for some } n \geq 1 \}$. Hence for every ideal $I$ of $S$ we have that
$$\sqrt{I} = \{ s \in S \mid s^n \in I \hbox{ for some } n \geq 1 \} = \widetilde{P}_1 \cap \ldots \cap \widetilde{P}_j,
$$ 
where $\widetilde{P}_1, \ldots, \widetilde{P}_j$ are the minimal prime ideals of $S$ above $I$.

 Let $V$ be a finitely generated, right $S$-module. An 
{{associated prime}} of $V$ is a prime ideal $P$ of $S$ such that 
 $P=ann_S(m)$ for some element  $m\in V$.  The set of associated primes of $V$ is always finite and is denote by $Ass(V)$. The minimal primes among the prime ideals containing $ann_S(V)$ are associated primes of $V$.
For the elements $s \in S$ such that for some $v \in V \setminus \{ 0 \}$, $v s = 0$, we have
$$
s \in \cup_{P \in Ass(V)} P.
$$
There is a composition series
$$
V_0 = 0 \subseteq V_1 \subseteq \ldots \subseteq V_{s-1} \subseteq V_s = V,
$$
such that  each  factor $V_{i+1} / V_i \simeq  S/ P_i$ for some prime ideal $P_i$ of $S$ and furthermore
$$
Ass(V) \subseteq \{ P_0, \ldots, P_{s-1} \} \subseteq Supp (V),
$$
where $Supp(V)$ is the set of all prime ideals  $P$ of $S$ such that  the localization module $V(S \setminus P)^{-1} \not= 0$.
Furthermore the minimal elements of  the three sets of primes considered above $
Ass(V)$, $ \{ P_0, \ldots, P_{s-1} \} $ and $ Supp (V)
$, are the same.

  \section{Nilpotent-by-abelian $S$-aritmetic self-similar groups : proof of Theorem A} \label{S-arithmetic}
  
  Recall that a group $G$ is of homotopical type $F_s$ if there is a $K(G,1)$-complex with finite $s$-skeleton. Thus type $F_1$ is finite generation and $F_2$ is finite presentability. For $s \geq 2$ type $F_s$ is equivalent to type $FP_s$ together with finite presentability.  
A matrix $B = (b_{i,j}) \in M_n(R)$ is upper triangular (or upper Borel), where $R$ is a ring, if $b_{i,j} = 0$ for $i > j$. 
  
\begin{theorem} \label{ThmBux} \cite{Bux} Let ${\mathcal G}$ be a Chevalley group, ${\mathcal B}$ its Borel subgroup, $K$ a finite field extension of $k(x)$, where $k$ is a finite field and $S$ a finite set of places. Let ${\mathcal O}_S = \{ t \in K \mid v(t) \geq 0 \hbox{ for } v \notin S \}$. Then ${\mathcal B}({\mathcal O}_S)$ has type $F_{d-1}$ but not $FP_d$, where $d = | S |$.
\end{theorem}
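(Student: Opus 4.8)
Write $d=\abs{S}$, and assume $d\ge1$. The plan is to read off both assertions from the action of $G:=\mathcal{B}(\mathcal{O}_S)$ on the contractible CAT(0) complex $Y=\prod_{v\in S}X_v$, where $X_v$ is the Bruhat--Tits building of $\mathcal{G}(K_v)$, a Euclidean building of dimension $\ell=\mathrm{rank}\,\mathcal{G}$ on which the totally disconnected group $\mathcal{G}(K_v)$ acts with compact open cell stabilisers. The one structural fact about Borel subgroups I use is that $\mathcal{B}$ is the full stabiliser of a chamber $\sigma_v$ at infinity of $X_v$; since the flag variety $\mathcal{G}/\mathcal{B}$ is complete, $\mathcal{B}(K_v)$ — hence $\mathcal{B}(\mathbb{A}_S)=\prod_{v\in S}\mathcal{B}(K_v)$ — acts \emph{cocompactly} on $X_v$, resp.\ on $Y$. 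Writing $\mathcal{B}=\mathcal{T}\ltimes\mathcal{U}$: the unipotent radical $\mathcal{U}(K_v)$ preserves every horosphere centred at the barycentre $\xi_v$ of $\sigma_v$, while $\mathcal{T}(K_v)$ translates the associated Busemann function $X_v\to\BR$ through a character of $\mathcal{T}$; letting this character range over a $\BQ$-basis $\lambda_1,\dots,\lambda_\ell$ of $X^*(\mathcal{T})\otimes\BQ$ consisting of positive multiples of the fundamental weights, and normalising the $v$-th Busemann function by $\deg(v)$, assembles Harder's reduction map $\mathbf h=(\mathbf h_1,\dots,\mathbf h_\ell)\colon Y\to\BR^{\ell}$.

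The arithmetic heart is that $\mathbf h$ is $G$-invariant. For $g=tu\in G$ with $t\in\mathcal{T}(\mathcal{O}_S)$ and $u\in\mathcal{U}(\mathcal{O}_S)$, the element $g$ shifts $\mathbf h_i$ by $\sum_{v\in S}\deg(v)\,v(\lambda_i(t))$; since $\lambda_i$ is a character of $\mathcal{T}$ we have $\gamma_i:=\lambda_i(t)\in\mathcal{O}_S^{\times}$, so $v(\gamma_i)=0$ for $v\notin S$ and the product formula $\sum_{\text{all }v}\deg(v)\,v(\gamma_i)=0$ forces the shift to vanish. Hence $G$ acts on the exhausting filtration of $Y$ by the $G$-invariant subcomplexes $Y_r:=\mathbf h^{-1}([-r,r]^{\ell})$. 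Harder's reduction theory for Borel subgroups over $\mathcal{O}_S$ shows $G$ acts \emph{cocompactly} on every $Y_r$; and since $G$ is discrete in $\mathcal{B}(\mathbb{A}_S)$ while the $X_v$-stabilisers are compact open, the cell stabilisers of the $G$-action on $Y$ are finite (the base vertex has stabiliser $\mathcal{B}(k')$, with $k'$ the field of constants of $K$). By Brown's criterion \cite{Brown1} the finiteness length of $G$ now equals the essential connectivity of the filtration $(Y_r)_r$: $G$ is of type $F_m$ iff for every $r$ there is $R\gg r$ with $\pi_i(Y_r)\to\pi_i(Y_R)$ trivial for all $i\le m-1$.

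To compute that essential connectivity I would apply Bestvina--Brady Morse theory to $\mathbf h$ (filtering by sublevel sets of a generic positive, resp.\ negative, linear combination of the $\mathbf h_i$, using $G$-invariance): passing a critical value glues a cone onto a descending link, and the product structure $Y=\prod_{v\in S}X_v$ together with the additive form of each $\mathbf h_i$ identifies every such descending link with a join $\ast_{v\in S}L_v$ of horospherical links $L_v\subseteq X_v$. Building combinatorics — retractions towards the chamber $\sigma_v$ at infinity and the Solomon--Tits theorem for residues — show each $L_v$ is homotopy equivalent to a nonempty, disconnected complex (for $\mathcal{G}=\mathrm{SL}_2$, literally a finite set of at least two points), so the $d$-fold join is $(d-2)$-connected with nonzero reduced homology in degree $d-1$; reduction theory is what controls which directions are non-cocompact and of what codimension, so that this link estimate actually governs the filtration. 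One concludes $(Y_r)_r$ is essentially $(d-2)$-connected, hence $G$ is of type $F_{d-1}$. For the negative statement one shows that $H_{d-1}$ of a cocompact slab contains an infinitely generated $\BZ G$-submodule that persists in the direct limit — concretely, the single positive relation $\sum_{v\in S}\chi_v=0$ among the $d$ ``horospherical valuation characters'' $\chi_v=\deg(v)\cdot v$ (the product formula again) creates a genuine degree-$(d-1)$ obstruction — so $G$ is not of type $FP_{d}$. The main obstacle of the whole proof is exactly this last step: proving the \emph{sharp} connectivity of the joined horospherical links, coupling it to reduction theory on the non-cocompact space $Y$, and producing an honestly infinitely generated top-degree homology module.

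The mechanism is most transparent when $\mathcal{G}$ has rank one, so that $G$ is metabelian; then, after passing to a finite-index subgroup, $G=A\rtimes Q$ with $A=\mathcal{O}_S$ and $Q=\langle u_1,\dots,u_{d-1}\rangle$ free abelian acting by multiplication with $\BZ$-independent $S$-units (by the Dirichlet unit theorem, $d-1=\mathrm{rk}\,\mathcal{O}_S^{\times}$), and $A$ is a finitely generated $\BZ Q$-module of exponent $p$ and of Krull dimension $1$ over $\BZ Q$. A direct computation identifies $\Sigma_A^{c}(Q)$ with the $d$ classes $[\chi_v]$, $v\in S$, of the normalised valuations restricted to $Q$: the product formula gives $\sum_{v\in S}\chi_v=0$, while any $d-1$ of the $\chi_v$ are $\BR$-linearly independent, so $A$ is $(d-1)$-tame but not $d$-tame. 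Then Theorem \ref{fpm-met}(2) (and Theorem \ref{cond-met} when $d=2$) yields that $G$ is of type $F_{d-1}$ but not $FP_{d}$, exhibiting $d-1$ as nothing but the $S$-unit rank — and, for $d=1$ (where $\mathcal{O}_S^{\times}$ is finite and $G=\mathcal{O}_S\rtimes\mathcal{O}_S^{\times}$), as the elementary observation that $G$ is not finitely generated. For higher-rank $\mathcal{G}$ the group $G$ is only nilpotent-by-abelian, and the building argument sketched above is the one that applies.
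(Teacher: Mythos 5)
Theorem~\ref{ThmBux} is not proved in the paper --- it is quoted verbatim from \cite{Bux}, applied as a black box in Section~\ref{S-arithmetic}, and the paper itself remarks that for the metabelian case $m=2$ the same homological conclusions can be recovered without the theory of buildings (Theorem~\ref{metabelian0}, via the $\Sigma$-invariant machinery of Theorems~\ref{cond-met} and~\ref{fpm-met}); that remark is essentially your closing rank-one paragraph. So there is no in-paper proof to compare against, and what can be assessed is whether your sketch would, if fleshed out, prove the theorem.

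The architecture you lay out is the correct one and is indeed Bux's: the action of $G=\mathcal B(\mathcal O_S)$ on $Y=\prod_{v\in S}X_v$, Busemann functions assembled into a $G$-invariant map via the product formula, Brown's criterion applied to the cocompact filtration $Y_r=\mathbf h^{-1}([-r,r]^{\ell})$, and Morse theory reducing essential connectivity to descending links that decompose as $|S|$-fold joins $\ast_{v\in S}L_v$. But the argument has genuine gaps, exactly where you flag them. First, the claim that each $L_v$ is nonempty and disconnected is the crux: for $\mathcal G=\mathrm{SL}_2$ the $L_v$ sit inside a horosphere of a tree and are discrete, but for higher rank $L_v$ is a positive-dimensional subcomplex of a spherical building and showing it is homotopy equivalent to a nonempty disconnected complex is a real retraction/Solomon--Tits computation, not an observation. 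Second, even granting the single-link connectivity, one needs (a) that the Morse function really has $\ast_{v\in S}L_v$ as its descending links and that $G$ acts cocompactly on each $Y_r$ --- which is where Harder's reduction theory does the work you only cite --- and (b) for the negative direction, that the directed system $\{\widetilde H_{d-1}(Y_r)\}_r$ is not essentially trivial, which is strictly more than ``the join has nonzero $\widetilde H_{d-1}$'': one must show these classes do not die under the inclusions $Y_r\hookrightarrow Y_R$. You name both obstacles yourself; unestablished, they constitute the bulk of \cite{Bux}.

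The rank-one $\Sigma$-invariant computation at the end is correct and elementary, and it is indeed the route the paper itself takes for $m=2$; but it only covers $\mathrm{rank}\,\mathcal G=1$ and so cannot replace the buildings argument in the generality of the statement.
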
 

In this section we consider applications of Theorem \ref{ThmBux} 
 for ${\mathcal G} = GL_m$. Then ${\mathcal B} = UT$, where $T$ is the group of diagonal  matrices in ${\mathcal G}$  and $U$ are the upper triangular matrices with  1 on the main diagonal. Fix a prime $p$, $k = \mathbb{F}_p$ the finite field with $p$ elements and $K = k(x)$. Let $$f_0 = x, f_1, \ldots, f_{n-1}$$ be pair-wise different, monic, irreducible polynomials in $\mathbb{F}_p[x] \setminus \mathbb{F}_p (x-1)$. For an irreducible polynomial $f \in \mathbb{F}_p[x] \setminus  \mathbb{F}_p$ define $v_f$ the $f$-adic valuation i.e. $v_f(g) = s$ for $g \in \mathbb{F}_p[x]$  if $s \geq 0$ and $f^s$ divides $g$ but $f^{s+1}$ does not divide $g$. Let $v_0$ be the  valorization defined by $v_0(g) = - deg(g)$. Set $$S = \{ v_0, v_{f_i} \mid 0 \leq i \leq n-1 \}.$$ Then $$A : = {\mathcal O}_S = \mathbb{F}_p[x^{\pm 1}, {1 \over f_1}, \ldots, {1 \over f_{n-1}}]$$ and  
$$G_0 = {\mathcal B}({\mathcal O}_S) = N \rtimes Q,$$
where $N = U(A)$, $Q = ( \mathbb{F}_p^* \times Q_0) \times \ldots \times (\mathbb{F}_p^* \times Q_0) = ( \mathbb{F}_p^* \times Q_0 )^m$, $\mathbb{F}_p^* =\mathbb{F}_p \setminus \{ 0 \}$  and $Q_0$ is the subgroup of $A \setminus \{ 0 \}$ generated by $f_0, \ldots, f_{n-1}$. Then 
$Z(G_0)$ is the group of diagonal matrices $\lambda I_m$, where $\lambda \in \mathbb{F}_p^* \times Q_0$. Set
$$
G = G_0/ Z(G_0).
$$

By Theorem \ref{ThmBux} since $|S| = n+1$ we obtain that $G_0$ is of type $F_n$ but not of type $FP_{n+1}$. Since $Z(G_0) \simeq \mathbb{F}_p^* \times Q_0 $ is a finitely generated abelian group, so is of type $FP_{\infty}$ and is finitely presented, we deduce the following result.

\begin{cor} For  $n \geq 2$, $G$ is $F_n$ but not $FP_{n+1}$. In particular $G$ is finitely presented.  \end{cor}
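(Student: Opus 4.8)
The plan is to transfer the finiteness properties of $G_0$ recorded above through the central extension $1 \to Z(G_0) \to G_0 \to G \to 1$, using that the kernel $Z(G_0) \simeq \mathbb{F}_p^* \times Q_0$ is a finitely generated abelian group, hence of type $FP_\infty$ and finitely presented. By Theorem \ref{ThmBux} applied with $|S| = n+1$, the group $G_0$ is of type $F_n$ and not of type $FP_{n+1}$; this is the only external input required.

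I would first deal with the homological side. For any short exact sequence of groups whose kernel is of type $FP_\infty$, the middle group is of type $FP_k$ if and only if the quotient is of type $FP_k$: this combines the two standard extension results of \cite{Bieri} — that $FP_k$ of kernel and quotient forces $FP_k$ of the middle group, and that $FP_k$ of the middle group together with $FP_{k-1}$ of the kernel forces $FP_k$ of the quotient — both implications being applicable here because $Z(G_0)$ is $FP_\infty$. Applying this with $k = n$ gives that $G$ is of type $FP_n$; applying it with $k = n+1$ shows that $G$ is not of type $FP_{n+1}$, since otherwise $G_0$ would be, contradicting Theorem \ref{ThmBux}.

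It then remains to upgrade $FP_n$ to $F_n$ when $n \geq 2$, for which, by the equivalence recalled at the start of the section, it suffices to check that $G$ is finitely presented. Here I would use that $G_0$ is finitely presented (being $F_n$ with $n \geq 2$) and that $Z(G_0)$, being central and generated as a group by finitely many elements, is the normal closure in $G_0$ of that finite set; adjoining those finitely many elements as relators to a finite presentation of $G_0$ yields a finite presentation of $G$. Hence $G$ is of type $F_n$, and in particular finitely presented, which is the last assertion.

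I do not expect any real obstacle: the argument is just a careful bookkeeping of the index shifts in the standard extension lemmas for the properties $FP_k$, together with the elementary remark that a finitely generated central subgroup is finitely normally generated, so that finite presentability descends to the quotient. Using an $FP_\infty$ kernel makes every requirement on $Z(G_0)$ automatic, so there is nothing delicate to verify.
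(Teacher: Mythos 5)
Your proposal is correct and follows the same route as the paper: transfer of finiteness properties through the central extension $1 \to Z(G_0) \to G_0 \to G \to 1$, using Theorem \ref{ThmBux} for $G_0$ and the fact that the finitely generated abelian kernel $Z(G_0)$ is of type $FP_\infty$ and finitely presented. The paper leaves the extension lemmas implicit with ``we deduce,'' while you spell out the bookkeeping for $FP_k$ and the finite normal generation of the central kernel, but the argument is the same.
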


Define $I = (x - 1)A$ an ideal in $A$.

\begin{lemma} $I \not= A$ and $A = I \oplus {\mathbb F}_p$ as vector space.
\end{lemma}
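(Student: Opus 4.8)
The plan is to exhibit the inclusion $\mathbb{F}_p \hookrightarrow A$ of the constant subring as a vector-space (indeed ring-theoretic) section of the quotient map $A \twoheadrightarrow A/I$, by producing an explicit ring retraction $\varepsilon\colon A \to \mathbb{F}_p$ whose kernel is exactly $I$.

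First I would build the augmentation $\varepsilon$. Recall $A$ is the localization $S^{-1}\mathbb{F}_p[x]$, where $S$ is the multiplicative monoid generated by $f_0 = x, f_1,\dots,f_{n-1}$. Starting from the evaluation homomorphism $\mathrm{ev}_1\colon \mathbb{F}_p[x]\to\mathbb{F}_p$, $g\mapsto g(1)$, note that each $f_i$ ($0\le i\le n-1$) is monic, irreducible and, by the hypothesis $f_i\notin\mathbb{F}_p(x-1)$ (and $f_0=x\ne x-1$), is not a scalar multiple of the irreducible polynomial $x-1$; hence $\gcd(f_i,x-1)=1$, so $f_i(1)\neq 0$, i.e. $\mathrm{ev}_1(f_i)\in\mathbb{F}_p^\times$. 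By the universal property of localization, $\mathrm{ev}_1$ extends uniquely to a ring homomorphism $\varepsilon\colon A\to\mathbb{F}_p$. Since $\varepsilon$ is the identity on the subring $\mathbb{F}_p\subseteq A$, it is surjective and is a retraction of the inclusion; and since $\varepsilon(x-1)=0$, we get $I=(x-1)A\subseteq\ker\varepsilon$. In particular $I\neq A$, because otherwise $\mathbb{F}_p=A/\ker\varepsilon$ would be a quotient of $A/I=0$.

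Next I would verify $\ker\varepsilon = I$, equivalently $A/I\cong\mathbb{F}_p$. Localization commutes with passage to quotients, so
$$
A/(x-1)A \;\cong\; \overline{S}^{-1}\bigl(\mathbb{F}_p[x]/(x-1)\mathbb{F}_p[x]\bigr),
$$
where $\overline{S}$ is the image of $S$ in $\mathbb{F}_p[x]/(x-1)\cong\mathbb{F}_p$. Under this identification $\overline{S}$ consists of the elements $f_i(1)\in\mathbb{F}_p^\times$ and their products, which are all units; localizing a ring at a subset of its units does nothing, so $A/(x-1)A\cong\mathbb{F}_p$. Comparing this with the surjection $A/I\to\mathbb{F}_p$ induced by $\varepsilon$ forces $\ker\varepsilon=I$.

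The direct-sum statement is then formal: $\mathbb{F}_p\cap I=\ker(\varepsilon|_{\mathbb{F}_p})=0$, while for any $a\in A$ we have $\varepsilon(a)\in\mathbb{F}_p\subseteq A$ and $a-\varepsilon(a)\in\ker\varepsilon=I$, so $a\in\mathbb{F}_p+I$; hence $A=\mathbb{F}_p\oplus I$ as $\mathbb{F}_p$-vector spaces. I do not expect a genuine obstacle here; the one point that must not be overlooked is the coprimality of the inverted polynomials $f_0,\dots,f_{n-1}$ with $x-1$, which is precisely the hypothesis $f_i\notin\mathbb{F}_p(x-1)$ and is exactly what makes $\varepsilon$ well defined and keeps $x-1$ a non-unit. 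An equivalent elementary route avoiding localization language is to write an arbitrary element of $A$ as $h(x)/s(x)$ with $s\in S$, observe $s(1)\in\mathbb{F}_p^\times$, and reduce $h/s$ modulo $(x-1)$ directly using only $s(1)\neq 0$.
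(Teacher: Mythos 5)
Your proposal is correct and is essentially the paper's argument: the key observation in both is that the inverted polynomials $f_i$ remain nonzero, hence units, modulo $(x-1)$, so that $A/I \cong \mathbb{F}_p[x]/(x-1) \cong \mathbb{F}_p$. You package this as an explicit augmentation $\varepsilon\colon A\to\mathbb{F}_p$ with kernel $I$, while the paper just computes $A/I$ directly and leaves the direct-sum decomposition implicit; the substance is the same.
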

\begin{proof}
Consider the ring $B = {\mathbb{F}_p}[x]/(x-1) \simeq \mathbb{F}_p $.   The images $g_0, g_1, \ldots, g_{n-1}$ of $f_0 = x, f_1, \ldots , f_{n-1}$ in $B$  are all non-zero. Hence
$$
A/ I \simeq B[{1 \over {g_0}}, \ldots, {1 \over {g_{n-1}}} ]  = B \not= 0.
$$ 
Note that $\mid B \mid = p < \infty$.
\end{proof}
  
    Define $N_0$ the subgroup of the nilpotent group $N$ that contains the matrices $(a_{i,j})$ such that $a_{i,j} \in  I^{j-i}$. Define the group homomorphism
$$
\theta : N_0 \to N
$$
given  by $$\theta(M)= \widetilde{M}, \hbox{ where }M = (a_{i,j}),  \widetilde{M} = ( \widetilde{a}_{i,j}) \hbox{ and } \widetilde{a}_{i,j} = { a_{i,j} \over (x-1)^{j-i}}.$$ 
The rule of multiplication of matrices implies that $\theta$ is a homomorphism.
Set $H = N_0 \rtimes Q$ a subgroup of $G_0$ and 
 define 
$$ 
f : H \to G_0,
$$
where the restriction of $f$ on $Q$ is the identity and the restriction of $f$ on $N_0$ is $\theta$.
\begin{lemma} $f$ is a virtual endomorphism of groups and $Z(G_0)$ is the maximal normal and $f$-invariant subgroup $K$ of $G_0$ which  is contained in  $H$.
Hence $G = G_0/ Z(G_0)$ is a transitive self-similar group of degree $p^l$, where $l$ is a cubic polynomial of $m$.
\end{lemma}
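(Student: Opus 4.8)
The plan is to verify three things in order: (i) $H = N_0 \rtimes Q$ has finite index in $G_0$; (ii) the map $f$ is a well-defined group homomorphism; (iii) $Z(G_0)$ is precisely the largest normal $f$-invariant subgroup of $G_0$ inside $H$. The index computation and degree formula will follow from the vector-space decomposition $A = I \oplus \mathbb{F}_p$ established in the preceding lemma. Indeed, $N/N_0$ is finite: an element of $N$ is determined by its strictly-upper-triangular entries $a_{i,j}$ ($i<j$), and $N_0$ is cut out by the conditions $a_{i,j} \in I^{j-i}$; since $A/I^{k}$ is a finite $\mathbb{F}_p$-vector space of dimension $k$ (using $A = I \oplus \mathbb{F}_p$, so $\dim_{\mathbb{F}_p} A/I^k = k$), the index $[N:N_0] = \prod_{i<j} |A/I^{j-i}| = \prod_{i<j} p^{j-i}$, and the exponent $l = \sum_{i<j}(j-i) = \sum_{k=1}^{m-1} k(m-k)$ is a cubic polynomial in $m$. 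Since $Q$ centralizes nothing problematic here and $H \cap N = N_0$ while $H \supseteq Q$, we get $[G_0 : H] = [N : N_0] = p^l$, which will also be the degree of the self-similar representation of $G$ once (iii) is done.

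For (ii), the key point is that $\theta : N_0 \to N$ is a homomorphism. Write a matrix in $N_0$ as $M = I_m + P$ with $P = (a_{i,j})$, $a_{i,j} \in I^{j-i}$ for $i<j$ and zero otherwise. The conjugation/dilation $\widetilde{M}$ replaces $a_{i,j}$ by $a_{i,j}/(x-1)^{j-i}$, which lies in $A$ precisely because $I^{j-i} = (x-1)^{j-i}A$. Concretely, $\theta$ is conjugation by the diagonal matrix $D = \mathrm{diag}(1, (x-1), (x-1)^2, \ldots, (x-1)^{m-1})$ inside $GL_m$ of the field of fractions: one checks $(D M D^{-1})_{i,j} = (x-1)^{i-j} a_{i,j}$, which is exactly $\widetilde{a}_{i,j}$. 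Since conjugation is always a homomorphism, $\theta$ is one, and it is injective because $D$ is invertible; that it lands in $N = U(A)$ (not just $U(\mathbb{F}_p(x))$) is the divisibility statement just noted. Then $f = \theta$ on $N_0$ and $f = \mathrm{id}$ on $Q$; to see $f$ is a homomorphism on all of $H = N_0 \rtimes Q$ one must check compatibility with the $Q$-action, i.e.\ $\theta(M^q) = \theta(M)^q$ for $q \in Q$. This holds because $q$ acts as conjugation by a diagonal matrix $\mathrm{diag}(\lambda_1, \ldots, \lambda_m)$ with $\lambda_i \in \mathbb{F}_p^* \times Q_0$, which commutes with $D$ (diagonal matrices commute); hence $f$ is a well-defined virtual endomorphism.

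The main obstacle is (iii): identifying the maximal normal $f$-invariant subgroup $K \le G_0$ with $K \subseteq H$. That $Z(G_0)$ works is easy — $Z(G_0)$ consists of scalars $\lambda I_m$ with $\lambda \in \mathbb{F}_p^* \times Q_0 \subseteq Q$, so $Z(G_0) \subseteq Q \subseteq H$, and $f$ fixes $Q$ pointwise so $f(Z(G_0)) = Z(G_0)$. The content is the reverse: if $K \trianglelefteq G_0$, $K \subseteq H$, and $f(K) \subseteq K$, then $K \subseteq Z(G_0)$. I would argue that $K \cap N$ must be contained in $Z(N) \cap N_0$ and in fact trivial, via an iteration: take $M = I_m + P \in K \cap N_0$, not central; normality under the diagonal torus $Q$ forces, after taking suitable commutators with torus elements, that individual "superdiagonal stripes" $a_{i,j}$ lie in $K$; then $f$-invariance repeatedly divides the top nonzero stripe by $(x-1)$, and since any nonzero element of $A$ has only finitely many factors of $(x-1)$ (as $A \subseteq \mathbb{F}_p(x)$ and $v_{x-1}$ is a valuation not in $S$, so $v_{x-1} \ge 0$ on $A$ and is finite), the stripe is forced to $0$ — contradiction unless $M = I_m$. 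This shows $K \cap N = 1$, so $K$ embeds in $Q = G_0/N$; but a normal subgroup of $G_0$ mapping injectively into the abelian quotient $Q$ and meeting $N$ trivially must centralize $N$ (commutators $[K,N] \subseteq K \cap N = 1$), hence $K$ centralizes $N$; combined with $K \subseteq Q$ abelian, $K$ is central in $G_0 = N \rtimes Q$, i.e.\ $K \subseteq Z(G_0)$. The delicate bookkeeping is in the commutator/induction argument extracting the individual entries and running the valuation descent; once that is in place, Proposition/Lemma on virtual endomorphisms from the preliminaries gives a faithful transitive state-closed action of $G = G_0/Z(G_0)$ on $\mathcal{T}_{p^l}$, completing the proof.
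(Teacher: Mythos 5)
Your steps (i) and (ii) are correct, and the reformulation of $\theta$ as conjugation by $D=\mathrm{diag}(1,(x-1),\dots,(x-1)^{m-1})$ is a clean way to make explicit what the paper compresses into ``the rule of multiplication of matrices implies that $f$ is a homomorphism''; the index count and the cubic formula $l=\sum_{k=1}^{m-1}k(m-k)$ match the paper. The argument that $K\cap N=1$ via the $(x-1)$-adic valuation is also sound, though the ``stripe isolation'' via commutators with torus elements is unnecessary: since $K\cap N\subseteq H\cap N=N_0$ and is $f$-invariant, iterating $f$ directly shows every entry $a_{i,j}$ of any matrix in $K\cap N$ lies in $\bigcap_{s\ge 1}I^{s}=0$, with no need to first separate entries.

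The gap is in the last step, where you pass from ``$K$ centralizes $N$'' to ``$K\subseteq Z(G_0)$.'' You write that $K$ embeds in $Q=G_0/N$ and then treat this as $K\subseteq Q$; these are not the same thing, and a normal subgroup meeting $N$ trivially need not be contained in the complement $Q$. What $[K,N]\subseteq K\cap N=1$ actually gives is $K\subseteq C_{G_0}(N)$, and $C_{G_0}(N)$ is strictly larger than $Z(G_0)$: it equals $Z(G_0)\times Z(N)$, where $Z(N)$ consists of the unitriangular matrices supported on the $(1,m)$-entry. So at this point $K$ could still have a nontrivial $Z(N)$-component, and ``$K$ centralizes $N$ and is abelian'' does not force centrality in $G_0$ because $K$ need not centralize $Q$. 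The paper closes this by writing any $k\in K$ as $k=(\alpha,\beta)\in Z(G_0)\times Z(N)$, noting $f(\alpha,\beta)=(\alpha,\beta/(x-1)^{m-1})$, and then running the same $f$-iteration/valuation argument a second time on the $Z(N)$-coordinate $\beta$ (which lies in $N_0$ since $\alpha\in Q\subseteq H$) to conclude $\beta=1$, hence $K\subseteq Z(G_0)$. Your proof is missing this second application of $f$-invariance.
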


\begin{proof}
Note first that $[G_0 : H] = p^{m-1} p^{2(m-2)} \ldots p^{i(m-i)} \ldots p^{m-1} = p^l < \infty$, where $l = \sum_{1 \leq i \leq m} i(m-i)$ is a cubic polynomial of $m$. The rule of multiplication of matrices implies that $f$ is a homomorphism.

Suppose that $K$ is a normal subgroup of $G_0$ contained in $H$  such that $f(K) \subseteq K$. Set $K_0= K \cap N$. Then $f(K_0) \subseteq K_0$ and for $A = (a_{i,j}) \in K_0$ we have that $A \in U$ and   $a_{i,j} \in \cap_{s \geq 1} I^s = 0$ for $i < j$. Thus $A$ is the identity matrix $I_m$ and $K_0$ is the trivial group.
 
Since $N$ intersects $K$ trivially and both are normal in $G$ we deduce that $N K \simeq N \times K$, so $K \subseteq C_{G_0}(N) = \{ g \in G_0 \mid [g, N] = 1 \} = \langle Z(G_0), Z(N) \rangle = Z(G_0) \times Z(N)$. Since for  $(\alpha, \beta) \in Z(G_0) \times Z(N)$ we have $f(\alpha, \beta) = (\alpha, \beta / (x-1)^{m-1})$ we deduce that for $(\alpha, \beta) \in K$ we have $\beta = 1_{G_0}$. Hence $K \subseteq Z(G_0)$ as required.

Finally since $Z(G_0)$ are the scalar matrices $\lambda I_m$, where $\lambda \in \mathbb{F}_p^*  \times Q_0$ and $I_m$ is the identity matrix, we have that $Z(G_0)  \subseteq Q$. Hence $f(Z(G_0)) = Z(G_0)$.
 \end{proof}

Note that we have constructed a self-similar $S$-arithmetic nilpotent-by-abelian group $G = G_0/ Z(G_0)$. The following result proves  Theorem A.

\begin{theorem} The group $G$ is a transitive, automata group.
\end{theorem}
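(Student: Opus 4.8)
The plan is to verify that $G$ is finitely generated and finite-state; combined with the previous lemma, which already exhibits $G$ as a transitive self-similar group of degree $p^{l}$, this gives that $G$ is a transitive automata group.
\emph{Finite generation.} By Theorem~\ref{ThmBux}, $G_0 = \mathcal{B}(\mathcal{O}_S)$ is of type $F_n$, in particular of type $F_1$, hence finitely generated; since $Z(G_0)\simeq \mathbb{F}_p^{*}\times Q_0$ is finitely generated abelian, $G=G_0/Z(G_0)$ is finitely generated. It will be convenient to use the explicit finite generating set of $G_0$ formed by the elementary matrices $e_{i,j}(1)=I_m+E_{i,j}$ ($1\le i<j\le m$), the diagonal matrices carrying a single $f_k$ in position $i$ ($0\le k\le n-1$, $1\le i\le m$), and the diagonal matrices carrying a single generator $\zeta$ of $\mathbb{F}_p^{*}$ in position $i$: conjugating the $e_{i,i+1}(1)$ by the listed diagonal generators produces all $e_{i,i+1}(\lambda)$ with $\lambda\in\mathbb{F}_p^{*}\times Q_0$, and since the characteristic is $p$ and $A$ is the $\mathbb{F}_p$-span of $Q_0=\langle f_0,\dots,f_{n-1}\rangle$, one obtains all $e_{i,i+1}(a)$ ($a\in A$), and hence all of $N=U(A)$ via the relations $[e_{i,k}(a),e_{k,j}(b)]=e_{i,j}(ab)$. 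We take the images of these elements in $G$.

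\emph{A formula for the states.} Working in $G_0$, since $A/(x-1)^s A\simeq\mathbb{F}_p[t]/(t^s)$ with $t=x-1$, the finite set $T$ of unipotent upper triangular matrices whose $(i,j)$-entry is a polynomial in $x-1$ of degree $<j-i$ over $\mathbb{F}_p$ is a right transversal of $N_0$ in $N$, hence of $H=N_0\rtimes Q$ in $G_0$, and $|T|=p^{l}$. Given $g=uq$ with $u\in N$, $q\in Q$, and $t\in T$, let $\tilde t\in T$ be the transversal representative of the coset $N_0\bigl(q^{-1}(tu)q\bigr)$; a short computation with matrices, using that $Q$ normalises $N_0$ and that $\theta$ commutes with conjugation by $Q$, shows that the cofactor $tg\tilde t^{-1}$ lies in $H$ and that the corresponding state equals $\theta\bigl(tu\,({}^{q}\tilde t)^{-1}\bigr)\,q$, where ${}^{q}\tilde t=q\tilde t q^{-1}$ and $tu\,({}^{q}\tilde t)^{-1}\in N_0$. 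Two consequences follow at once: first, the $Q$-part of every state of $g$ equals that of $g$, so the whole state set $\mathcal{Q}(g)$ of a generator consists of elements with one and the same $Q$-part; second, for $g=e_{i,j}(1)$ one has $t\,e_{i,j}(1)\in T$ for every $t\in T$, so $\tilde t=t\,e_{i,j}(1)$ and $tu\,({}^{q}\tilde t)^{-1}=I_m$, whence all states of $e_{i,j}(1)$ are trivial and $\mathcal{Q}(e_{i,j}(1))=\{e_{i,j}(1),I_m\}$ is finite.

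\emph{Finiteness of the state sets of the diagonal generators, and conclusion.} Let $g=d$ be a diagonal generator with a single $f_k$ in position $p$ (the case of a scalar $\zeta$ being identical with $\deg f_k$ replaced by $0$). By the above all elements of $\mathcal{Q}(d)$ have $Q$-part $d$, and conjugation by $d$ multiplies the $(a,b)$-entry of a matrix by $d_a/d_b\in\{1,f_k,f_k^{-1}\}$. I would show that the $N$-parts occurring in $\mathcal{Q}(d)$ all lie in the finite set $V$ of unipotent matrices over $A$ whose $(i,j)$-entry has $x$-degree at most $(\deg f_k)\cdot j$ and, at each finite place of $S$, a pole of order at most $1$ occurring only in column $p$. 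Indeed $V$ contains $I_m$, and if the $N$-part $u$ lies in $V$ then in $\theta\bigl(tu\,({}^{d}\tilde t)^{-1}\bigr)$ the matrix $({}^{d}\tilde t)^{-1}$ has polynomial entries except for at most a simple pole at $f_k$ in column $p$; the division of the $(i,j)$-entry by $(x-1)^{j-i}$ in $\theta$ absorbs exactly the degree contributed by $t$ and by $({}^{d}\tilde t)^{-1}$, while any simple pole at $f_k$ carried by $u$ in column $p$ cancels against the matching zero of $({}^{d}\tilde t)^{-1}$, so $\theta\bigl(tu\,({}^{d}\tilde t)^{-1}\bigr)\in V$ again. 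Therefore $\mathcal{Q}(d)\subseteq\{vd:v\in V\}\cup\{d\}$ is finite, $G_0$ is finite-state on the chosen generating set, and since the state set of $\bar g\in G=G_0/Z(G_0)$ is the image of that of $g$, the group $G$ is finite-state; with the finite generation above, $G$ is a transitive automata group.

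The step I expect to be the main obstacle is the verification that $V$ is state-closed. A uniform bound on the $x$-degrees of the entries does not suffice, since conjugation by $d$ injects a factor of $f_k$ whose degree would otherwise accumulate; one must work with the column-weighted bound $(\deg f_k)\cdot j$ and exploit the precise cancellations — no carries in characteristic $p$ when reducing $tu$ modulo $N_0$, the exact match between the division by $(x-1)^{j-i}$ and the degrees of the transversal entries, and the cancellation of the $f_k$-poles between $u$ and $({}^{d}\tilde t)^{-1}$ — to see that the complexity of the $N$-parts really remains bounded along the state orbit of each generator.
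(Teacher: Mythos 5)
Your overall plan coincides with the paper's: combine the previous lemma (which already shows $G$ is a transitive self-similar group of degree $p^{l}$) with finite generation and a finite-state verification on an explicit generating set. The treatment of the unipotent generators $e_{i,j}(1)$ is essentially the paper's computation for the $u_i$ (the paper uses only the $u_i=e_{i,i+1}(1)$, but the same argument $Te_{i,j}(1)=T$ works for all $i<j$).

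The gap is exactly where you flag it — the state-closure of $V$ for the diagonal generators — and the source of the difficulty is your decision to split each state into its $N$-part and its $Q$-part and track the $N$-part alone. Once you peel off $d$ on the right, the $N$-part $\theta\bigl(t\,v\,({}^{d}\tilde t)^{-1}\bigr)$ does acquire poles at $f_k$ in column $p$ (because $({}^{d}\tilde t)^{-1}=d\,\tilde t^{-1}d^{-1}$ has a factor $d_j^{-1}=f_k^{-1}$ in that column), and you are then led to the column-weighted degree bound $(\deg f_k)\cdot j$ and to a delicate pole-cancellation claim which you leave unverified. The paper sidesteps all of this by not separating the $Q$-part: for $\delta\in\Delta_k^{(s)}$ (a matrix over $\mathbb{F}_p[x]$ with diagonal $(1,\dots,f_s,\dots,1)$ built in) the state at $t_1$ is computed directly as the matrix $(e_{i,j})$ with $e_{i,j}=d_{i,j}/(x-1)^{j-i}$ and $(d_{i,j})=t_1\delta\,t_2^{-1}$. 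Since $t_1,\delta,t_2^{-1}$ are all polynomial matrices and the cofactor lies in $H$, each $d_{i,j}$ is divisible by $(x-1)^{j-i}$, so $(e_{i,j})$ is again a matrix over $\mathbb{F}_p[x]$ — no poles ever appear. The degree estimate is then the clean one $\deg(d_{i,j})\le(r-i)+\deg(f_s)+(j-u)\le\deg(f_s)+(j-i)$, giving the \emph{uniform} bound $\deg(e_{i,j})\le\deg(f_s)$ rather than your weaker column-dependent bound. The one structural fact this computation rests on — which your argument uses silently when invoking $\tilde t^{-1}$ — is that the chosen transversal $T$ is closed under inversion with the same degree-defect bounds (the paper's Claim~1, proved by induction on $m$ by comparing the $(m-1)\times(m-1)$ upper-left and lower-right corners and then handling the $(1,m)$ entry). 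You should either prove this inversion-stability of $T$, or at least record that it is needed. With that and with the switch from the $N$-part to the full polynomial matrix, the "main obstacle" you identify dissolves and the proof closes.
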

\begin{proof}
For $1 \leq i < j \leq m$ let $x_i^{(j)}$ be the diagonal matrix $diag(1, \ldots, 1, f_j, 1, \ldots, 1)$, where $f_j$ is on the $i$th place and let  $u_{i}$ be the matrix with entries 1 on the main diagonal, 0 below the main diagonal and above the main diagonal  all entries 0 except on the $i,i+1$-th place, where the entry is 1. 
 Note that the ideal  $I^j = (x-1)^j A$ of $A$ has a transversal $T_j$ in $A$ defined  by
 $$
 T_j  = \{ g \in \mathbb{F}_p[x] \mid deg(g) \leq j-1 \hbox{ or } g = 0\}.
 $$
 We fix the following right transversal $T$ of $H$ in $G$ 
 $$
 T = \{ (a_{i,j}) \in M_m(\mathbb{F}_p[x])  \mid$$ $$ a_{i,j} = 0 \hbox{ if }1 \leq j < i \leq m, a_{i,i} = 1 \hbox{ for } 1 \leq i \leq m, a_{i,j} \in T_{j-i} \hbox{ if } 1 \leq i < j \leq m \}.
 $$  
 Thus $G_0$ is the disjoint union $\cup_{t \in T} H t$.
 Note that
 $$Tu_i = T \hbox{ for every } 1 \leq i \leq m-1.
 $$ 
 Hence the set of states $\mathcal Q(u_i) $ contains only the trivial group.
 
 \medskip
 {\bf Claim 1} Let $t \in T$. Then $t^{-1} \in T $. 
 
 \medskip
{\bf Proof of Claim 1} We use induction on $m$.
 Set
 $
 t = A = (a_{i,j}) \in T
 $
 a matrix $m \times m$.
 Thus $a_{i,i} = 1$ for all $1 \leq i \leq m$, $a_{i,j} = 0$ for $1 \leq j < i \leq m$ and $deg(a_{i,j}) \leq j -i -1$ for $1 \leq i < j \leq m$ if $a_{i,j} \not= 0$.
 
 Let $B = A^{-1} = (b_{i,j})$. Claim 1 is equivalent to $b_{i,i} = 1$ for all $1 \leq i \leq m$, $b_{i,j} = 0$ for $1 \leq j < i \leq m$ and $deg(b_{i,j}) \leq j -i -1$ for $1 \leq i < j \leq m$ if $b_{i,j} \not= 0$. The first two properties obviously hold since $A$ is an upper triangular matrix.
Set $M = (m_{i,j})$ a matrix $(m-1) \times (m-1)$, where $m_{i,j} = a_{i+1, j+1}$ for $ 1 \leq i,j \leq  m-1$. Then since $A$ is an upper triangular matrix for $\widetilde{M} = (\widetilde{m}_{i,j})$ a matrix $(m-1) \times (m-1)$ defined by $\widetilde{m}_{i,j} = b_{1+i, 1+j}$ we have that $\widetilde{M} = M^{-1}$. 
By induction on $m$ we have that 
\begin{equation} \label{deg1} deg(\widetilde{m}_{i,j}) \leq j-i-1 \hbox{ if }1 \leq i<j \leq m-1 \hbox{ and }\widetilde{m}_{i,j} \not= 0.
\end{equation}

Similarly we can define a $(m-1) \times (m-1)$  matrix $S = (s_{i,j})$ such that $s_{i,j} = a_{i,j}$ and a $(m-1) \times (m-1)$  matrix $\widetilde{S} = (\widetilde{s}_{i,j})$ such that $\widetilde{s}_{i,j} = b_{i,j}$, where $ 1 \leq i,j \leq m-1$. Then again since $A$ is an upper triangular matrix we have that $\widetilde{S} = S^{-1}$ and by induction on $m$ we have that 
\begin{equation} \label{deg2} deg(\widetilde{s}_{i,j}) \leq j-i-1 \hbox{ if } 1 \leq i<j \leq m-1 \hbox{ and }\widetilde{s}_{i,j} \not= 0.
\end{equation} 
 By (\ref{deg1}) and (\ref{deg2}) we obtain that
 \begin{equation} \label{deg3} 
 deg(b_{i,j}) \leq j-i-1 \hbox{ for } i < j, (i,j) \not= (1,m) \hbox{ and } b_{i,j} \not= 0.
 \end{equation}  
 Finally it remains to consider the coefficient $b_{1,m}$. Note that since $A B = I_m$ we have
 \begin{equation} \label{deg4}
 0 = b_{1,m} + a_{1,2} b_{2,m} + \ldots + a_{1,m-1} b_{m-1,m} + a_{1,m}. 
 \end{equation}
 If $a_{1,i} b_{i,m} \not= 0$ and $ 2 \leq i \leq m-1$ we have
 \begin{equation} \label{deg5}
 deg(a_{1,i} b_{i,m}) = deg(a_{1,i}) + deg(b_{i,m}) \leq i-2 + m-i-1 = m -3 < m-2.
 \end{equation}
 Finally  (\ref{deg4}) and (\ref{deg5}) together with $deg(a_{1,m}) \leq  m-2$ if $a_{1,m} \not= 0$ imply that
 $$
 def(b_{1,m}) \leq m-2 \hbox{ if } b_{1,m} \not= 0.
 $$
 This completes the proof of Claim 1.
 
 \medskip
 Consider the finite set
 $$
 \Delta_{k}^{(s)} = \{ (a_{i,j}) \in M_m(\mathbb{F}_p[x])  \mid a_{i,j} = 0 \hbox{ if }i > j, a_{i,i} = 1 \hbox{ for } 1 \leq i \not= k \leq m, $$ $$ a_{k,k} = f_s, deg(a_{i,j}) \leq deg(f_s) \hbox{ or } a_{i,j} = 0 \hbox{ if } i < j  \}
 $$ 
 
 \medskip
 {\bf Claim 2} The set of states ${\mathcal Q}(x_k^{(s)}) \subseteq \Delta_{k}^{(s)}$. In particular  ${\mathcal Q}(x_k^{(s)})$ is finite.
 
 \medskip
 {\bf Proof of Claim 2}
 Since $x_k^{(s)} \in \Delta_{k}^{(s)}$ to complete the proof of the claim it suffices to show that
 $$\cup_{\delta \in \Delta_{k}^{(s)}} {\mathcal Q}( \delta) \subseteq \Delta_{k}^{(s)}.$$
Let $\delta \in \Delta_{k}^{(s)}$. We will calculate $\delta_1, \ldots, \delta_d$, where $d = [G_0 : H]$, such that the recursive description of $\delta$ is
$$
\delta = (\delta_1, \ldots, \delta_d) \sigma$$
for some $\sigma \in S_d$. We aim to show that $\delta_r \in \Delta_k^{(s)}$ for $1 \leq r \leq d$. 

 Suppose $t_1 \in T$. Then there is an unique $t_2 \in T$  such that $t_1 \delta t_2^{-1} \in H$. Write
 $$
 t_1 = (a_{i,j}), \ \delta = ( b_{i,j}), \ t_2^{-1}  = ( c_{i,j}), t_1 \delta t_2^{-1} = (d_{i,j}).
 $$ 
By Claim 1  $t_2^{-1}$ is an upper triangular matrix, hence all three matrices $t_1, \delta$ and $t_2^{-1}$ are upper triangular.
 Then
 $$d_{i,j} = \sum_{i \leq r \leq u \leq j} a_{i,r} b_{r,u} c_{u,j} \hbox{  for } i \leq j
 $$
 and $d_{i,j} = 0$ for $i > j$.
 Then the states $\delta_1, \ldots, \delta_d$ of $\delta$ are
 $$
 f( t_1 \delta t_2^{-1}) =  ( e_{i,j}).
 $$
 Thus by the definition of the virtual endomorphism $f$
 we have
 $$
 e_{i,j} = d_{i,j}/ (x-1)^{j-i} \hbox{ for } i \leq j
 $$
 and $e_{i,j} = 0$ otherwise. We claim that 
 \begin{equation} \label{delta-inclusion} 
 (e_{i,j}) \in \Delta_{k}^{(s)}.
 \end{equation}
 Note that the diagonal of $(e_{i,j})$ is the same as the diagonal of $(d_{i,j})$, and is equal to $(1, \ldots, 1, f_s, 1, \ldots, 1)$, where $f_s$ is on the $k$th place. Finally we find an upper limit for $deg(e_{i,j})$ when $i < j$ and this will imply (\ref{delta-inclusion}).
 Note that if $e_{i,j} \not= 0 $
 \begin{equation} \label{deg01} deg(e_{i,j}) = deg (d_{i,j}) - (j-i) 
 \end{equation}
 and
 \begin{equation} \label{deg02}
 deg(d_{i,j}) \leq max_{i \leq r \leq u \leq j;  a_{i,r} b_{r,u}  c_{u,j}\not=0} \{ deg (a_{i,r}) + deg(b_{r,u}) + deg(c_{u,j}) 
 \}. 
 \end{equation}
 Observe that $deg(b_{r,u} ) \leq deg(f_s) $ if $b_{r,u} \not= 0$, then
 \begin{equation} \label{deg03}
 deg (a_{i,r}) + deg(b_{r,u}) + deg(c_{u,j}) \leq (r - i) + deg(f_s) + j - u \leq deg(f_s) + (j-i).
 \end{equation}
Then  by (\ref{deg01}), (\ref{deg02}) and (\ref{deg03})
$$
deg(e_{i,j}) \leq  deg(f_s) \hbox{ or } e_{i,j} = 0 \hbox{ for } i < j, \hbox{so the matrix  } (e_{i,j}) \in  \Delta_k^{(s)}.
$$
This completes the proof of Claim 2.
\end{proof}

\section{A positive characteristic version of the Nekrashevych-Sidki example : proof of Theorem B} \label{sectionN-S}

1. We recall first the Nekrashevych-Sidki example from \cite{N-S}. Let $W = \mathbb{Z}^n$. $B(n, \mathbb{Z})$ is the subgroup of $GL_n(\mathbb{Z})$ containing the matrices containing only even elements above the main diagonal. The elements of $W$ are considered as columns and we have
$$
G = W \rtimes B(n , \mathbb{Z}),
$$
where for $b_1, b_2 \in B(n , \mathbb{Z}), w_1, w_2 \in W$ we have $$(w_1 b_1) (w_2 b_2) = (w_1 + b_1 (w_2)) (b_1 b_2)$$  and $$ b_1(w_2) \hbox{ is the product of matrices }b_1 w_2.$$
Then for
$$H = (2\mathbb{Z}   \times \mathbb{Z}^{n-1}) \rtimes B(n, \mathbb{Z})$$ Nekrashevych and Sidki defined in \cite{N-S} a virtual endomorphism $$f : H \to G$$ given by 
$$
f (wb) = A (w) Ab A^{-1} \hbox{ for } w \in 2 \mathbb{Z} \times {\mathbb{Z}^{n-1}}, b \in B(n, \mathbb{Z}),
$$				
where $A$ is the matrix 
\[ A = \begin{pmatrix} 0  &  1  & 0&\cdots & 0 \\
                    0 &0&1& \cdots &0\\
																				\cdots & \cdots  & \cdots & \cdots   & \cdots  \\
                     0 & 0 &   \cdots & 0&  1 \\
                    {1 \over 2}& 0 & \cdots &0 &0 \end{pmatrix}\]
and $A(w)$ is the product of matrices $A w$. By \cite{N-S} $G$ is a finite state self-similar group i.e. an automata group.

2. Let $p$ be a prime number and ${\mathbb{F}}_p$ be the field with $p$ elements.
Define $B(n, \mathbb{F}_p[x])$ as the subgroup of matrices of $GL_n( \mathbb{F}_p[x])$ whose elements on $(i,j)$th place are inside the ideal  $I = (x-1) \mathbb{F}_p[x]$ of $\mathbb{F}_p[x]$, whenever $i < j$. 
Now define $V = (\mathbb{F}_p[x])^n$ and think of the elements of $V$ as columns. We define the group $G$ as
$$
G = V \rtimes B(n, \mathbb{F}_p[x])
$$
with multiplication given by
\begin{equation} \label{product010}
(v_1, b_1) (v_2, b_2) = (v_1 + b_1(v_2), b_1 b_2 ),
\end{equation}
where $b_1, b_2 \in B(n, \mathbb{F}_p[x])$, $v_1, v_2 \in V$ and $b_1(v_2)$ is the product of matrices $b_1 v_2$.
Define $$
V_0 = (x-1) \mathbb{F}_p[x] \times \mathbb{F}_p[x]^{n-1}
$$ and 
$$
H = V_0 \rtimes B(n, \mathbb{F}_p[x]).
$$
Thus $H$ is a subgroup of $G$ of index $p$. Define a virtual endomorphism
$$
f : H \to G
$$
given by
$$
f(v,b) = (A(v), Ab A^{-1}),
$$
where 
\[ A = \begin{pmatrix} 0  &  1  & 0&\cdots & 0 \\
                    0 &0&1& \cdots &0\\
																				\cdots & \cdots  & \cdots & \cdots   & \cdots  \\
                     0 & 0 &   \cdots & 0&  1 \\
                    {1 \over {x-1}}& 0 & \cdots &0 &0 \end{pmatrix}\]
                 and $A(v)$ is the product of matrices $Av$.
								
From now on for simplicity we write $B$ for $B(n, \mathbb{F}_p[x])$.

\begin{lemma} $f$ is a simple endomorphism.
\end{lemma}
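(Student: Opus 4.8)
The plan is to follow the pattern of the proof of Theorem A (and of the original Nekrashevych--Sidki argument): first note that $f$ is a genuine homomorphism --- immediate from the matrix rules, since $(Ab_1A^{-1})(Ab_2A^{-1})=Ab_1b_2A^{-1}$ and $(Ab_1A^{-1})(A(v_2))=A(b_1(v_2))$, so $f$ is multiplicative on both the $V$-part and the $B$-part and hence on $H$; one should also check it is well defined, i.e. that $A$ maps $V_0$ into $V$, which holds because the only denominator appearing in $A$ is $x-1$ and it is applied exactly to the first coordinate, which is divisible by $x-1$ for elements of $V_0$. Then let $K\trianglelefteq G$ with $K\subseteq H$ and $f(K)\subseteq K$; the goal is $K=1$, and I would reach it in two stages, first controlling $K\cap V$ and then deducing $K\subseteq V$.

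\emph{Stage 1: $K\cap V=1$.} Put $K_0=K\cap V$. As an intersection of normal subgroups $K_0\trianglelefteq G$; since $K_0\subseteq K\subseteq H$ and $H\cap V=V_0$, in fact $K_0\subseteq V_0$; and since $f$ restricted to $V_0$ is $v\mapsto A(v)\in V$, we get $f(K_0)\subseteq f(K)\cap V\subseteq K\cap V=K_0$, so $K_0$ is an $f$-invariant subgroup of $V_0$. Identifying $V$ with $\mathbb{F}_p[x]^n$, the restriction of $f$ to $V$ is left multiplication by the matrix $A$, and a direct computation of powers of $A$ gives $A^n=\tfrac{1}{x-1}I_n$. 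Hence for any $w\in K_0$ the vectors $A^{jn}(w)=(x-1)^{-j}w$ lie in $K_0\subseteq \mathbb{F}_p[x]^n$ for every $j\ge 0$, i.e. $w\in\bigcap_{j\ge 0}(x-1)^j\mathbb{F}_p[x]^n=0$. Therefore $K_0=K\cap V=1$.

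\emph{Stage 2: $K\subseteq V$, hence $K=1$.} Since $K$ and $V$ are normal with $K\cap V=1$ we have $[K,V]\subseteq K\cap V=1$, so $K$ centralizes $V$. Computing in the semidirect product, for $g=(v,b)$ and $u\in V$ one has $g(u,1)g^{-1}=(b(u),1)$, so $g\in C_G(V)$ iff $b$ fixes every vector of $\mathbb{F}_p[x]^n$, i.e. iff $b=I_n$; thus $C_G(V)=V$. Consequently $K\subseteq C_G(V)=V$, and combining with Stage 1, $K=K\cap V=1$. This shows every normal $f$-invariant subgroup of $G$ inside $H$ is trivial, i.e. $f$ is simple.

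The main (and really only) delicate point is the interaction in Stage 1: $f$-invariance forces \emph{all} iterates $A^j(w)$ of an element of $K_0$ to remain inside the integral lattice $V=\mathbb{F}_p[x]^n$, and via $A^n=\tfrac{1}{x-1}I_n$ this produces divisibility of $w$ by arbitrarily high powers of $x-1$, forcing $w=0$. Everything else --- the homomorphism check, the identification $H\cap V=V_0$, and the computation $C_G(V)=V$ --- is routine bookkeeping in the semidirect product $V\rtimes B$.
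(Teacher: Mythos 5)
Your argument is correct and follows essentially the same route as the paper: you show $K\cap V$ is killed by iterating $f$ (the paper applies $f^n$ coordinate-wise to get $m_i/(x-1)$ in each slot, while you use the identity $A^n=\tfrac{1}{x-1}I_n$, which is the same computation), and you then pass to $K=1$ via the fact that no nontrivial element of $B$ centralizes $V$. Your Stage 2 just spells out more explicitly the step the paper summarizes in one sentence, and your well-definedness/homomorphism check at the start is a reasonable addition that the paper leaves implicit.
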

\begin{proof}
Let $K$ be a normal subgroup of $G$ contained in $H$ such that $f(K) \subseteq K$.  Let $M = K \cap V_0$, thus $M$ is a $\mathbb{Z} B$-submodule of $V_0$  such that
$$
f(M) \subseteq M.
$$
Let $m = (m_1, \ldots, m_n)^t$ be a matrix column from $M$, where upper index $t$ denotes matrix transpose. Then by the definition of $f$ we have that
$
f(m) = A(m) =  (m_2, \ldots, m_n, m_1/ (x-1))^.
$
Then $$f^n(m) = (m_1/ (x-1), \ldots, m_n/(x-1))^t,$$
hence $m_i \in \cap_j (x-1)^j \mathbb{F}_p[x] = 0$, so $M = 0$. Then since $B$ does not contain non-trivial elements that centralize $V$ we  deduce that $K$ is the trivial subgroup of $G$ i.e. $f$ is simple.
\end{proof}

We fix $e_i = (0, \ldots, 0, 1, 0 \ldots, 0)^t$ the matrix column that has unique non-zero entry 1 on the $i$th position.

\begin{lemma} Let $n \geq 3$ and $Y_0$ be a finite generating set of $B$. Then for every $g \in \{ e_1, \ldots, e_n \} \cup Y_0$ we have that the set of states ${\mathcal Q}(g)$ is finite. In particular  $G = V \rtimes B$ is a transitive, automata group.
\end{lemma}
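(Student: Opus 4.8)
The plan is to produce an explicit \emph{finite} subset $\mathcal{S}\subseteq G$ which contains all the listed generators and is closed under passage to states; then $\mathcal{Q}(g)\subseteq\mathcal{S}$ is finite for each such $g$, and since $\{e_1,\dots,e_n\}\cup Y_0$ generates $G$ and the action arising from the simple virtual endomorphism $f$ (simple by the previous lemma) is automatically faithful, state-closed and transitive on the first level, $G$ is a transitive automata group.

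First I would set up the degree-$p$ machinery. Since $[G:H]=p$, take the right transversal $T=\{t_\lambda=(\lambda e_1,I_n):\lambda\in\mathbb{F}_p\}$; this really is a transversal because evaluation at $x=1$ carries $B$ into the group of invertible lower triangular matrices over $\mathbb{F}_p$, so $b_{1,1}(1)\ne 0$ for every $b\in B$. A direct computation with the multiplication rule (\ref{product010}) then shows that, for $g=(v,b)$, the coset permutation sends $\lambda$ to $\mu=b_{1,1}(1)^{-1}(\lambda+v_1(1))$, the associated cofactor is $c_\lambda=(\lambda e_1+v-\mu\,b(e_1),\,b)\in H$, and hence the state of $g$ at $\lambda$ is
\[
g_\lambda \;=\; f(c_\lambda)\;=\;\bigl(A(\lambda e_1+v-\mu\,b(e_1)),\ AbA^{-1}\bigr).
\]
Two structural facts drive everything else. (i) The $B$-component of every state of $(v,b)$ equals $AbA^{-1}$; since $Ae_1=\tfrac{1}{x-1}e_n$ and $Ae_j=e_{j-1}$ for $j\ge 2$, one checks $A^n=\tfrac{1}{x-1}I_n$, a central matrix, so $b\mapsto AbA^{-1}$ has order dividing $n$ on $B$. (ii) Acting on a column $w\in V_0$, $A$ outputs $(w_2,w_3,\dots,w_n,\,w_1/(x-1))^t$, a polynomial vector (as $(x-1)\mid w_1$), and this never increases the maximal degree of the coordinates.

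Now I would build the candidate set. Put $\mathcal{B}_0=\{I_n\}\cup Y_0\subseteq B$ and let $\mathcal{B}=\{A^kbA^{-k}:b\in\mathcal{B}_0,\ 0\le k<n\}$; by (i) this is a \emph{finite} subset of $B$ closed under $b\mapsto AbA^{-1}$. Let $D$ be the maximum of $0$ and of all degrees of entries of matrices in $\mathcal{B}$, and set
\[
\mathcal{S}=\{(v,b)\in G:\ b\in\mathcal{B}\ \text{and}\ \deg v_i\le D\ \text{for all } i\},
\]
a finite set (of at most $|\mathcal{B}|\cdot p^{(D+1)n}$ elements). Clearly $(e_i,I_n)\in\mathcal{S}$ and $(0,b_0)\in\mathcal{S}$ for $b_0\in Y_0$. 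For closure, take $(v,b)\in\mathcal{S}$: in the displayed formula $AbA^{-1}\in\mathcal{B}$, and $w=\lambda e_1+v-\mu\,b(e_1)$ has $\deg w_i\le\max(0,\deg v_i,\deg b_{i,1})\le D$ for all $i$, because the first column of $b\in\mathcal{B}$ consists of polynomials of degree $\le D$; hence by (ii) the new $V$-component $A(w)$ again has all coordinate degrees $\le D$, so the state lies in $\mathcal{S}$. Thus $\mathcal{Q}(g)\subseteq\mathcal{S}$ for every generator $g$. Finally $\langle e_1,\dots,e_n,Y_0\rangle=G$: it contains $B=\langle Y_0\rangle$, and conjugating the $e_i$ by the elementary matrices $I_n+fE_{j,1}$ ($j\ge 2$, $f\in\mathbb{F}_p[x]$) and $I_n+(x-1)hE_{1,j}$ ($h\in\mathbb{F}_p[x]$), all of which lie in $B$, together with the $e_i$ themselves produces all of $V=\mathbb{F}_p[x]^n$ (using $\mathbb{F}_p[x]=\mathbb{F}_p+(x-1)\mathbb{F}_p[x]$ to reach $\mathbb{F}_p[x]e_1$). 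Combining with the transitivity of the first-level action, $G$ is a transitive automata group.

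The only genuine difficulty is the degree bookkeeping in (ii) and the closure step: a priori the correction term $-\mu\,b(e_1)$ and the replacement $b\rightsquigarrow AbA^{-1}$ might inflate degrees without bound, and the point is that they cannot — the $B$-components stay inside the finite set $\mathcal{B}$, which is finite precisely because $A^n=\tfrac{1}{x-1}I_n$ is central — so one uniform bound $D$ governs all states simultaneously.
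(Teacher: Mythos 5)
Your proof is correct and follows essentially the same strategy as the paper: both constructions exploit that $A^n=\tfrac{1}{x-1}I_n$ is central (so the $B$-components of states stay in a finite set) and that $w\mapsto A(w)$ does not increase the maximal coordinate degree, yielding a finite state-closed set containing the generators. The only cosmetic difference is that you keep the $B$-components inside the explicit finite orbit $\mathcal{B}$ of $\{I_n\}\cup Y_0$ under $A$-conjugation, whereas the paper instead defines the state-closed set $\Delta_k$ via the uniform degree bound $\rho(A^jbA^{-j})\le k$ for $0\le j\le n-1$; both encode the same boundedness.
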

 
\begin{proof} Observe that $\{ e_1 \} \cup B$ generates $G$. Indeed if $v \in V$ there are $v_1, \ldots, v_n \in V$ such that the $i$th coordinate of $v_i$ is 1. Then there is $b_i \in B$ such that the first column of $b_i$ is $v_i$ and so $\sum_{1 \leq i \leq n} b_i(e_1) = \sum_{1 \leq i \leq n} v_i = v$. Thus $v$ belongs to the subgroup of $G$ generated by $e_1$ and $B$.

Recall that by \cite{Behr} $B$ is finitely generated for $n \geq 3$.
Hence $G$ is generated by  $\{ e_1\} \cup Y_0$, where $Y_0$ is a finite  generating set of $B$.

Define for any non-zero matrix $C = (c_{i,j})$ with entries in $\mathbb{F}_p[x]$ the non-negative integer
$$
\rho(C) = max \{ deg(c_{i,j}) \mid c_{i,j} \not= 0 \}.
$$    
If $C$ is the zero matrix we define $\rho(C) = - \infty$.
Using the definition of $\rho$ we define
$$
\Delta_k = \{ (v,b) \in V \rtimes B \mid 
\rho(A^j b A^{-j}) \leq k \hbox{ for } 0 \leq j \leq n-1 \hbox{ and  } \rho(v)  \leq k \}.
$$ 
Observe that $A^n = {1 \over {x-1}} I_n$, where $I_n$ is the identity $n \times n$-matrix, hence $A^n b A^{-n} = b$ and the condition on  $A^j b A^{-j}$ in the definition of $\Delta_k$ holds for all $j \in \mathbb{Z}$.

 Define a right transversal $T$ of $H$ in $G$ by
$$
T = \{ e_1^{\alpha}  \mid \alpha \in \mathbb{F}_p \}.
$$

\medskip
{\bf Claim} For every $g \in \Delta_k$ the set of states ${\mathcal Q}(g)$ is a subset of $\Delta_k$. In particular, since $\Delta_k$ is finite, ${\mathcal Q}(g)$ is finite.

\medskip
{\it Proof of the Claim} Let $g = (v,b) \in \Delta_k$ and $t \in T$. Then $Htg  = H \tilde{t}$ for some $\widetilde{t} \in T$.
Then the cofactor is $tg \widetilde{t}^{-1} = tvb \widetilde{t}^{-1} = (tvb(\widetilde{t})^{-1}, b) \in V_0 \rtimes B$. Applying the virtual endomorphism $f$ we obtain the element
$$
g_t = (A(tvb(\widetilde{t})^{-1}), Ab A^{-1}) \in V \rtimes B.
$$
We claim that $g_t \in \Delta_k$. It remains to show that $\rho(A(tvb(\widetilde{t})^{-1})) \leq k$. Note that  by the definition of $A$ for any non-zero matrix column $w$ we have
$
\rho(A(w)) \leq \rho(w)$, in particular
$$
\rho(A(tvb(\widetilde{t})^{-1}) ) \leq \rho(tvb(\widetilde{t})^{-1}). 
$$
Furthermore
$$
\rho(tvb(\widetilde{t})^{-1}) \leq max \{ \rho(t), \rho(v), \rho(b(\widetilde{t})^{-1}) \} $$
and $\rho(t) = 0, \rho(v) \leq k$. Finally
$$
\rho(b(\widetilde{t})^{-1})  = \rho(b(\widetilde{t})) \leq \rho(b) \leq k.
$$

\end{proof}

\section{Explicit calculations of the states in the metabelian case } \label{Construction}

In this section  we consider in more details  the metabelian case of Theorem A i.e. $m = 2$. Here we denote by $Q$ the group that was denoted by $Q_0$ in section \ref{S-arithmetic} and note that we actually consider a subgroup of finite index of the original group from Theorem A.  All actions and modules are right ones and as before $p$ is a prime number.
Let  $f_0 = x, f_1, \ldots, f_{n-1} \in  {\mathbb{F}}_p[x] \setminus \mathbb{F}_p(x-1)$ be pair-wise different, irreducible, monic polynomials. To simplify the calculations we assume further that
\begin{equation} \label{1-condition}
f_i(1) = 1 \hbox{ for } 1 \leq i \leq n-1.
\end{equation}
Consider the group
$$
G = A \rtimes Q, \hbox{ where } A = {\mathbb{F}}_p[x^{\pm 1}, {1 \over {f_1}}, \ldots, {1 \over {f_{n-1}}} ] \hbox{ and } Q = \langle x_0, \ldots, x_{n-1} \rangle \simeq {\mathbb{Z}}^n,
$$
where  $x_i$ acts on $A$ via multiplication with $f_i$ for $0 \leq i \leq n-1$.

The homological finiteness properties in this case follow from earlier results in  \cite{Bux} , \cite{Desi} and do not involve the theory of buildings.

 \begin{theorem} \label{metabelian0} For $n \geq 2$, 
 $G$ is finitely presented, of type $FP_n$ but not of type $FP_{n+1}$.
 \end{theorem}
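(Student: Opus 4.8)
The plan is to realize $G = A \rtimes Q$ as a metabelian group of the form $A \to G \to Q$ with $A$, $Q$ abelian and $G$ finitely generated, and then to invoke the Bieri--Strebel classification (Theorem \ref{cond-met}) together with the $FP_m$-Conjecture cases recorded in Theorem \ref{fpm-met}. Finite generation of $G$ is clear: $Q \simeq \mathbb{Z}^n$ is finitely generated, and $A$ is generated as a $\mathbb{Z}Q$-module by $1 \in A$ since $Q_0$-action inverts $x = f_0$ and the $f_i$, so $A = \mathbb{F}_p[x^{\pm 1}, f_1^{-1}, \dots, f_{n-1}^{-1}]$ is obtained from $\mathbb{Z}[1]$ by applying the generators $x_0^{\pm 1}, \dots, x_{n-1}^{\pm 1}$. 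Thus $G$ is a finitely generated metabelian group and the Bieri--Strebel machinery applies.

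The core of the argument is a computation of the Bieri--Strebel invariant $\Sigma_A^c(Q) \subseteq S(Q) \simeq S^{n-1}$ and a verification of $n$-tameness versus $(n+1)$-tameness. First I would identify $\mathrm{Hom}(Q, \mathbb{R})$ with $\mathbb{R}^n$ using the basis dual to $x_0, \dots, x_{n-1}$. For an irreducible monic $f \in \mathbb{F}_p[x]$ the $f$-adic valuation $v_f$ and the degree valuation $v_0 = -\deg$ each induce a character of $Q$ via $q \mapsto v(q)$ (extending $v$ multiplicatively to the subgroup $Q_0 \leq A^\times$); these are precisely the characters $\chi$ for which $A$ fails to be finitely generated over $\mathbb{Z}Q_\chi$, because localizing $A$ forces one to invert elements of unbounded $v$-value in exactly the direction where the corresponding monomial shrinks. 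Concretely, one shows $\Sigma_A^c(Q) = \{[\chi_{v_0}], [\chi_{v_{f_1}}], \dots, [\chi_{v_{f_{n-1}}}]\}$ together possibly with the characters coming from valuations at primes $f$ that were \emph{not} inverted (here $f \neq f_0, \dots, f_{n-1}$): for such $f$, $v_f$ is nonnegative on $A$ only if $\chi$ is in the appropriate half-space, so these contribute a whole family. I would organize this via the standard description (as in \cite{BS}) of $\Sigma_A(Q)$ for $A$ a finitely generated $\mathbb{Z}Q$-module that is a localization of a polynomial-type ring: $[\chi] \in \Sigma_A^c(Q)$ iff $\chi$ arises from a discrete valuation of the field of fractions that is nonnegative on the generators but not on all of $A$. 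The key arithmetic relation is that $\sum_{i=0}^{n-1} v_{f_i}(q) \cdot \deg(f_i) + v_0(q) = 0$ for all $q \in Q_0$ (product formula for $\mathbb{F}_p(x)$ restricted to $Q_0$ -- using $f_i(1)=1$ is a harmless normalization), which translates into a single linear relation $\lambda_0 \chi_{v_0} + \sum_i \lambda_i \chi_{v_{f_i}} = 0$ with all $\lambda_i > 0$ among these $n$ distinguished characters.

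From that linear relation everything follows. The relation with positive coefficients means there is a way to write $0$ as a positive combination of $n$ elements of $\Sigma_A^c(Q)$ (with multiplicity -- take $\lambda_i$ copies after clearing denominators, or observe $n$-tameness is about $n$ summands from $\Sigma_A^c(Q)$, not necessarily distinct, being nonzero); so $A$ is \emph{not} $(n+1)$-tame, hence by the $FP_m$-Conjecture (which holds here: $A$ has finite exponent $p$ and, as a localization of $\mathbb{F}_p[x]$, Krull dimension $1$, so Theorem \ref{fpm-met}(2) applies) $G$ is not of type $FP_{n+1}$. For the positive direction, I would check that no relation $\sum_{j=1}^{n} \mu_j \chi_j = 0$ with $\mu_j > 0$ exists among \emph{fewer} than all $n$ distinguished characters nor involving the half-space families -- because any $n-1$ of the $\chi_{v_{f_i}}, \chi_{v_0}$ are linearly independent in $\mathbb{R}^n$ (the valuation matrix is, after the single product relation, of full rank $n-1$), and the extra half-space directions cannot cancel a positive combination of the distinguished ones. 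Hence $A$ is $n$-tame, so $G$ is of type $FP_n$; since $n \geq 2$ this also gives $FP_2$, and Theorem \ref{cond-met} upgrades $FP_2$ to finite presentability.

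The main obstacle I anticipate is the precise identification of $\Sigma_A^c(Q)$ -- in particular controlling the contribution of valuations $v_f$ at primes $f$ not among $f_0, \dots, f_{n-1}$ and checking that these do not spoil $n$-tameness. This is where a careful application of the Bieri--Strebel description of $\Sigma$ for localizations (and the finiteness of $S$) is needed; once the geometry of $\Sigma_A^c(Q)$ -- a finite set of $n$ points satisfying exactly one positive linear relation, plus closed half-spheres in the remaining directions all avoiding antipodal-pair cancellation beyond what the product formula forces -- is nailed down, the tameness bookkeeping is routine linear algebra. An alternative, cleaner route avoiding $\Sigma$-computations altogether is to observe that the group here is a finite-index subgroup of the group $PU(2,A)$ of Theorem A with $m=2$, for which type $F_n$ but not $FP_{n+1}$ was already established via Bux's theorem and the center argument; since $FP_n$ and finite presentability are invariant under finite index, Theorem \ref{metabelian0} follows immediately. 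I would present this finite-index reduction as the primary proof and relegate the $\Sigma$-computation to a remark, since it both shortens the argument and makes transparent why the characteristic-$p$ hypothesis and the genuine need for \cite{Bux}, \cite{Desi} enter only through the ambient group.
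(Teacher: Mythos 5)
Your main route (reduce via Theorems \ref{cond-met} and \ref{fpm-met} to checking that $A$ is $n$-tame but not $(n+1)$-tame, then identify $\Sigma_A^c(Q)$ through the Bieri--Groves link to valuation theory and invoke the product formula) is exactly the paper's argument, so the general strategy is sound. However, two points in your execution need correction.

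First, you list $\Sigma_A^c(Q) = \{[\chi_{v_0}], [\chi_{v_{f_1}}], \dots, [\chi_{v_{f_{n-1}}}]\}$, i.e.\ $n$ characters, and later speak of ``$n$ distinguished characters'' with a relation of ``rank $n-1$.'' You have dropped $\chi_{v_{f_0}} = \chi_{v_x}$: the correct set has $n+1$ elements, namely $\chi_0, \dots, \chi_{n-1}$ (dual to the generators $x_0, \dots, x_{n-1}$, coming from $v_{f_0}, \dots, v_{f_{n-1}}$) and $\chi_n$ (from $v_0 = -\deg$). The product formula $\sum_{i=0}^{n-1} \deg(f_i)\, v_{f_i}(q) + v_0(q) = 0$ has $n+1$ terms and it is exactly this $(n+1)$-term positive relation that rules out $(n+1)$-tameness; with only $n$ characters your bookkeeping would produce the wrong exponent. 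With $n+1$ characters spanning $\mathbb{R}^n$ and the product formula as the unique linear relation (rank $n$, not $n-1$), any $n$ of them with repetition are positively independent, which gives $n$-tameness.

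Second, the ``whole family of characters from uninverted primes $f$'' that you flag as the main obstacle is a red herring. For irreducible $f$ outside $\{f_0, \dots, f_{n-1}\}$, $v_f(f_i) = 0$ for every generator of $Q \simeq Q_0 = \langle f_0, \dots, f_{n-1}\rangle$, so $v_f$ restricts to the zero character and contributes no point of $S(Q) = \bigl(\mathrm{Hom}(Q,\mathbb{R}) \setminus \{0\}\bigr)/\!\sim$ at all. There is no half-space family to control; $\Sigma_A^c(Q)$ is precisely the $n+1$ isolated points.

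Your proposed ``primary'' route -- observing that the Section \ref{Construction} group is a finite-index subgroup of $PU(2,A)$ from Theorem A, and transporting $F_n$ / not $FP_{n+1}$ by finite-index invariance -- is logically correct (the paper itself remarks that this group has finite index in the one from Theorem A). But it does not ``make transparent the genuine need for \cite{Bux}'': it makes the proof depend on Bux's building-theoretic Theorem \ref{ThmBux}, whereas the point of the $\Sigma$-invariant computation, as the paper states explicitly, is precisely to give an argument for the metabelian case that does not invoke the theory of buildings and instead rests only on the Bieri--Strebel classification and \cite{Desi}. So the two routes are not interchangeable in spirit, even though both are valid.
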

 
  \begin{proof} 
 By Theorem \ref{cond-met} and Theorem  \ref{fpm-met}, Theorem \ref{metabelian0}  is equivalent to $A$ is $n$-tame but not $(n+1)$-tame as $\mathbb{Z} Q$-module.
  In our case , because of the link between $\Sigma^c$ and valuation-theory described in \cite{BG}, $\Sigma^c_A(Q) = \{ [\chi_i] \}_{0 \leq i \leq n}$, where 
  
  a) $\chi_i(x_j) = \delta_{i,j}$ for $0 \leq i,j \leq n-1$, where $\delta_{i,j}$ is the Kronecker symbol;
  
  b) $\chi_n(x_j) = - deg (f_j)$ for $0 \leq j \leq n-1$.
    
Thus every $n$ elements of $\Sigma_A^c(Q)$ lie in an open semi-sphere of $S(Q)$ but the whole set $\Sigma_A^c(Q)$ does not lie in an open semi-sphere of $S(Q)$, so $A$ is $n$-tame but not $(n+1)$-tame as $\mathbb{Z} Q$-module.    
  \end{proof}
\noindent
We move from additive to multiplicative notation.
 Let $
\dot{A}$ be the multiplicative group formed by $u^{r}$ where $r\in A$ and 
$$
u^{p} =1 \hbox{ and }
u^{r_{1}}u^{r_{2}} =u^{r_{1}+r_{2}}\text{.}
$$
Thus $G \simeq \dot{A}\rtimes Q$, where $Q$ acts on $\dot{A}$
as 
\[
x_{i}:u^{r}\rightarrow u^{r.x_{i}}\hbox{ for } 0\leq i\leq n-1. 
\]
From now on we identify $G$ with  $\dot{A}\rtimes Q$.
Note that  $Q$ acts faithfully on $\dot{A}$ and $G$ is a metabelian group
generated by $\left\{ u,x_{0},x_{1},...,x_{n-1}\right\} $.

 Consider the ideal $ I = (x - 1)A$ of $A$.  Recall that $A=I+\mathbb{F}_{p}$.
Let $\dot{I}$ be the corresponding subgroup of $\dot{A}$ formed by elements $
u^{r}$ such that $r\in I$. Define the subgroup $H$ of $G$ 
\[
H=\dot{I}\rtimes Q\text{.} 
\]
Then $H$ is a subgroup of $G\,$\ of index $p$ with transversal $
T=\left\{ e,u,u^{2},...,u^{p-1}\right\} $ which we will re-write later as $
\left\{ 0,1,...,p-1\right\} $. Elements of $G$ have the unique form $u^{r}q$
and elements of $H$ have the unique form $u^{\left( x-1\right) r}q$, where $
r\in A,q\in Q$. By (\ref{1-condition}) it is easy to deduce that $H$ is normal in $G$.

Define the virtual endomorphism 
\begin{eqnarray*}
f &:&H\rightarrow G,\text{ } \\
u^{\left( x-1\right) r}q &\rightarrow &u^{r}q\text{ for all }r\in A\text{.}
\end{eqnarray*}

In the rest of the section we make explicit calculation of the states of the generators of $G$. Write $e$ for the identity element.

\textbf{The permutational representation of }$\sigma :G\rightarrow
Perm\left( \left\{ 0,1,...,p-1\right\} \right) $ \textbf{on the transversal }
$T$.

\medskip
\noindent
(i) $u:Hu^{i}\rightarrow Hu^{i+1}$ for all $0\leq i\leq p-2$ and $
Hu^{p-1}\rightarrow Hu^{p}=He=H$; therefore, 
\[
\sigma \left( u\right) =\left( 0,1,...,p-1\right)  \in S_p; 
\]
(ii) $x_{j}:Hu^{i}\rightarrow Hu^{i}x_{j}=H\left( x_{j}\right)
^{u^{-i}}u^{i}=Hu^{i}$; therefore, 
\[
\sigma \left( x_{j}\right) = e.
\]

\textbf{The vector of co-factors}\newline
(1) of $u$ is
\[
\left( e\right) _{0 \leq i \leq p-1}; 
\]
(2) of $x_{j}$ is \[
\left( u^{i\left( 1-f_{j}^{-1}\right) }x_{j}\right) _{0 \leq i \leq p-1}, 
\] since $u^{i}x_{j}u^{-i}=\left( u^{i}\left( u^{-i}\right)
^{x_{j}^{-1}}\right) x_{j}= u^{i\left( 1-f_{j}^{-1}\right) }x_{j}. $

We apply $f$ to the entries of the above vectors to obtain
\textbf{the image of the generators of }$G$ \textbf{ in } $Aut({\mathcal A}_p)$:
\begin{equation} \label{pintura1}
u=\left( 0,1,...,p-1\right) \in S_p ; 
\end{equation}
\begin{equation} \label{pintura2} 
x_{j} =\left( u^{-i\left( \frac{f_{j}^{-1}-1}{x-1}\right) }x_{j}\right)
_{0 \leq i \leq p-1}, 
\end{equation}
\begin{equation} \label{pintura300}
x_{j}^{-1} =\left( x_{j}^{-1}u^{i\left( \frac{f_{j}^{-1}-1}{x-1}\right)
}\right) _{0 \leq i \leq p-1}=\left( u^{-i\left( \frac{f_{j}-1}{x-1}\right)
}x_{j}^{-1}\right) _{0 \leq i \leq p-1}\text{.}
\end{equation}
Thus
$$
	x_0^{-1} = \left( u^{-i}
x_{0}^{-1}\right) _{0 \leq i \leq p-1}.
$$
Note that for $p=2$,
$
x_{0}^{-1}=\left( x_{0}^{-1},u^{-1}x_{0}^{-1}\right) 
$
is just the classical representation of the lamplighter group generated by $x_0$ and $u$.

For $g \in G$ we write $(g)^{(1)}$ for $(g_0, \ldots, g_{p-1})$, where $g_0 = g_1 = \ldots = g_{p-1} = g$.

\begin{lemma} \label{power} For every positive integer $i$ we have
$$
u^{x^{i}} = (u^{x^{i-1}} u^{x^{i-2}} \ldots u^x u)^{(1)} u. 
$$
Hence for $\lambda \in \mathbb{F}_p[x]$ we have
$$
u^{\lambda} = ( u^{{\lambda - \lambda(1)} \over {x -1} })^{(1)} u^{\lambda(1)}.
$$
\end{lemma}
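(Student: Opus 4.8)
The plan is to establish a single recursion valid for every $r\in A$ and then read off both displayed identities by specialization. By the decomposition $A=I\oplus\mathbb{F}_p$ we have $A/I\cong\mathbb{F}_p$; write $\bar r\in\mathbb{F}_p$ for the image of $r$ under the projection $A\twoheadrightarrow A/I$, so that $\bar r=r(1)$ whenever $r\in\mathbb{F}_p[x]$, and $r-\bar r\in I=(x-1)A$, hence $\tfrac{r-\bar r}{x-1}\in A$. The claim is that
$$
u^{r}=\bigl(u^{\frac{r-\bar r}{x-1}}\bigr)^{(1)}\,u^{\bar r}\qquad\text{for every }r\in A .
$$
Granting this, the first identity of the lemma is the case $r=x^{i}$: there $\bar r=x^{i}(1)=1$ and $\tfrac{x^{i}-1}{x-1}=x^{i-1}+\dots+x+1$, and since $\dot A$ is abelian $u^{x^{i-1}+\dots+x+1}=u^{x^{i-1}}u^{x^{i-2}}\cdots u^{x}u$. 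The second identity is the same claim for $r=\lambda\in\mathbb{F}_p[x]$, where $\bar\lambda=\lambda(1)$.

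To prove the claim I would run the general recipe (\ref{form})--(\ref{product}) with the transversal $T=\{e,u,\dots,u^{p-1}\}$, just as in the computation yielding (\ref{pintura1}), (\ref{pintura2}) and (\ref{pintura300}). Since $\dot A$ is abelian, $Hu^{\iota}u^{r}=Hu^{\iota+r}$, and $u^{\iota+r}\in Hu^{j}$ precisely when $\iota+r-j\in I$, i.e. when $j\equiv\iota+\bar r\pmod p$; hence $\sigma(u^{r})$ is the cyclic shift $\tau^{\bar r}$, where $\tau=(0,1,\dots,p-1)=\sigma(u)$. For the states, fix $\iota\in\{0,\dots,p-1\}$ and set $j=(\iota+\bar r)\bmod p$; the corresponding cofactor is
$$
t_{\iota}\,u^{r}\,t_{j}^{-1}=u^{\iota}\,u^{r}\,u^{-j}=u^{\,r+(\iota-j)\cdot 1_A}=u^{\,r-\bar r},
$$
the last step because $\iota-j\equiv-\bar r\pmod p$ while $p\cdot 1_A=0$, so the integer ``carry'' arising when the shift wraps around vanishes in characteristic $p$. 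As $r-\bar r\in I$, this cofactor lies in $\dot I\subseteq H$, so $f$ sends it to the state $f(u^{r-\bar r})=u^{\frac{r-\bar r}{x-1}}$, \emph{the same for every} $\iota$. Hence $u^{r}=\bigl(u^{\frac{r-\bar r}{x-1}}\bigr)^{(1)}\tau^{\bar r}$; and since (\ref{pintura1}) and (\ref{product}) give $u^{\bar r}=(e)^{(1)}\tau^{\bar r}$ and the first factor has trivial permutation, this equals $\bigl(u^{\frac{r-\bar r}{x-1}}\bigr)^{(1)}u^{\bar r}$, as claimed.

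I do not expect a genuine obstacle: the argument is a direct unwinding of the definition of the self-similar action attached to $f$. The one place calling for care is the bookkeeping with the transversal when the shift $\tau^{\bar r}$ wraps around, i.e. when $(\iota+\bar r)\bmod p<\iota$; the extra term produced there is a multiple of $p\cdot 1_A$, hence $0$ in $A$, and this is exactly what forces all $p$ states of $u^{r}$ to coincide. Alternatively, once the case $r=x^{i}$ is in hand one may obtain the $\lambda$-case by multiplicativity: for $\lambda=\sum_{k}a_{k}x^{k}$ one has $u^{\lambda}=\prod_{k}(u^{x^{k}})^{a_{k}}$, and a product of wreath elements of the shape $(h)^{(1)}\tau^{a}$ equals $\bigl(\prod_{k}h_{k}\bigr)^{(1)}\tau^{\sum_{k}a_{k}}$ by (\ref{product}), because within each factor all coordinates coincide.
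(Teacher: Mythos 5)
Your proposal is correct, but it takes a genuinely different and cleaner route than the paper's proof. The paper proves the first identity by induction on $i$: the base case $u^{x}=x_{0}^{-1}ux_{0}$ is computed explicitly by multiplying the state-representations (\ref{pintura1}), (\ref{pintura2}), (\ref{pintura300}) via (\ref{product}), and the inductive step repeats this triple-product computation; the second identity is then deduced by a second induction, on $\deg\lambda$. You instead prove a single uniform recursion $u^{r}=\bigl(u^{(r-\bar r)/(x-1)}\bigr)^{(1)}u^{\bar r}$, valid for all $r\in A$, by directly unwinding the definition of the self-similar action attached to $f$: with the transversal $T=\{u^{\iota}\}_{0\le\iota\le p-1}$ you observe that $\sigma(u^{r})=\tau^{\bar r}$ and that every cofactor $t_{\iota}u^{r}t_{(\iota)\sigma}^{-1}$ equals $u^{r-\bar r}$ because the integer ``carry'' $\iota-(\iota)\sigma+\bar r$ is a multiple of $p$ and hence maps to $0$ in $A$. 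Both stated identities then drop out as specializations ($r=x^{i}$ and $r=\lambda$). Your argument is more general (it covers any $r\in A$, not only $\mathbb{F}_p[x]$, which is tacitly needed later since exponents such as $(f_{j}^{-1}-1)/(x-1)$ in (\ref{pintura2}) are not polynomials) and dispenses with both inductions; what the paper's computational route buys is tighter continuity with the explicit state formulas it has just derived, which it reuses verbatim in Lemma \ref{degree}. The one bookkeeping point you correctly flag — the vanishing of the wrap-around carry in characteristic $p$ — is exactly what forces all $p$ states of $u^{r}$ to coincide, and your treatment of it is sound.
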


\begin{proof} To prove the first equality we induct on $i$.
We calculate
 first $u^x = x_0^{-1} u x_0 $. Using (\ref{product}), (\ref{pintura1}), (\ref{pintura2}) and (\ref{pintura300}) we obtain
\noindent
\begin{eqnarray*}
\text{ }u^{x} &&=\left( x_{0}^{-1},\text{ }u^{-1} x_{0}^{-1},\ldots,\text{ }u^{1-p}x_{0}^{-1}\right) \left(0, 1,2, \ldots, p-1\right) \left( x_{0},\text{ }x_{0} u ,\ldots,\text{ }x_0 u^{p-1} \right) \\
&&= \left( x_{0}^{-1},\text{ }u^{-1} x_{0}^{-1},\ldots,\text{ }u^{1-p}x_{0}^{-1}\right) \left( \text{ }x_{0} u ,\ldots,\text{ }x_0 u^{p-1}, x_0 \right) \left(0, 1,2, \ldots, p-1\right) \\
&&= (u, \ldots, u) u = (u)^{(1)} u.
\end{eqnarray*}

For the inductive step we have
 \begin{eqnarray*}
\text{ }u^{x^i} &&=\left( x_{0}^{-1},\text{ }u^{-1} x_{0}^{-1},\ldots,\text{ }u^{1-p}x_{0}^{-1}\right) u^{x^{i-1}} \left( x_{0},\text{ }x_{0} u ,\ldots,\text{ }x_0 u^{p-1} \right) \\
&&=\left( x_{0}^{-1},\text{ }u^{-1} x_{0}^{-1},\ldots,\text{ }u^{1-p}x_{0}^{-1}\right) (u^{x^{i-2}} \ldots u^x u)^{(1)} u \left( x_{0},\text{ }x_{0} u ,\ldots,\text{ }x_0 u^{p-1} \right) \\
&&=\left( x_{0}^{-1},\text{ }u^{-1} x_{0}^{-1},\ldots,\text{ }u^{1-p}x_{0}^{-1}\right) (u^{x^{i-2}} \ldots u^x u)^{(1)}  \left( \text{ }x_{0} u ,\ldots,\text{ }x_0 u^{p-1}, x_0 \right) u \\
&&= (u^{x^{i-1}} \ldots u^x u, \ldots, u^{x^{i-1}} \ldots u^x u) u = (u^{x^{i-1}} \ldots u^x u)^{(1)} u
\end{eqnarray*}
To prove the second equality we induct on $deg(\lambda)$ if $\lambda \not= 0$. The case $deg(\lambda) = 0$ is obvious. Suppose that $deg(\lambda) = m > 0$, $\lambda = h + \alpha x^m$ for $h \in \mathbb{F}_p[x]$, $deg(h) < m$ and $\alpha \in {\mathbb{F}}_p \setminus \{ 0 \}$. Then by induction
 \begin{eqnarray*}
\text{ }u^{\lambda} &&=u^h  (u^{x^m})^{\alpha}  =  ( u^{{h - h(1)} \over {x -1} })^{(1)} u^{h(1)} \left( ( u^{{x^m - 1} \over {x -1} })^{(1)} u\right)^{\alpha}\\
&&= ( u^{{h - h(1)} \over {x -1} }  u^{\alpha {{x^m - 1} \over {x -1} }})^{(1)} u^{h(1)+ \alpha} = ( u^{{\lambda - \lambda(1)} \over {x -1} })^{(1)} u^{\lambda(1)}.  \\
\end{eqnarray*}
\end{proof}

\begin{lemma} \label{degree} Let $j \in \{ 0,1,2, \ldots, n-1 \}$.
The subset $$Y_j = \{   u^{\lambda} x_j^{-1} \mid \lambda \in \mathbb{F}_p[x], deg (\lambda) \leq deg(f_j) \hbox{ or } \lambda = 0 \}$$ of $G$ is state closed i.e. for ever $g \in Y_j$ we have a recursive presentation $g = (g_0, g_1, g_2, \ldots, g_{p-1}) \pi$, where $\pi = \sigma(g)$ is a permutation and $g_0,g_1, g_2, \ldots,g_{p-1} \in Y_j$. In particular $G$ is a transitive automata group  i.e. a finite state, transitive self-similar group.
\end{lemma}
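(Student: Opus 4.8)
The plan is to prove the state-closure of each $Y_j$ by a direct computation with the wreath recursion, using Lemma~\ref{power} to handle the $u$-part and (\ref{pintura300}) to handle the $x_j^{-1}$-part, and then to deduce the two consequences (automata, transitive) at once.

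First I would reduce the assertion ``$G$ is a transitive automata group'' to the state-closure of the sets $Y_j$. Since $\{u,x_0,\dots,x_{n-1}\}$ generates $G$, so does $\{u,x_0^{-1},\dots,x_{n-1}^{-1}\}$. By (\ref{pintura1}) the element $u$ equals the $p$-cycle $c=(0,1,\dots,p-1)$ with all states trivial, so $\mathcal Q(u)=\{u,e\}$ is finite and, as $u$ acts as a $p$-cycle on the first level, $G$ acts transitively there. Because $x_j^{-1}\in Y_j$ and $Y_j$ is finite (it is parametrised by the finitely many $\lambda\in\mathbb F_p[x]$ with $\deg\lambda\le\deg f_j$ or $\lambda=0$), once $Y_j$ is shown to be state-closed we get $\mathcal Q(x_j^{-1})\subseteq Y_j$ finite, and then $G$ is a transitive automata group on the generating set $\{u,x_0^{-1},\dots,x_{n-1}^{-1}\}$.

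For the core claim, fix $g=u^{\lambda}x_j^{-1}\in Y_j$ with $\lambda=0$ or $\deg\lambda\le\deg f_j$. By Lemma~\ref{power}, in $G\wr S_p$ one has $u^{\lambda}=(h,\dots,h)\,c^{\lambda(1)}$ with $h=u^{(\lambda-\lambda(1))/(x-1)}$, while (\ref{pintura300}) gives $x_j^{-1}=\bigl(u^{-i\nu_j}x_j^{-1}\bigr)_{0\le i\le p-1}$ with trivial permutation, where $\nu_j:=(f_j-1)/(x-1)$ is a genuine polynomial because $f_j(1)=1$ (this is (\ref{1-condition}), and is automatic for $f_0=x$). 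Multiplying by the product rule (\ref{product}) and using $(i)c^{\lambda(1)}\equiv i+\lambda(1)\pmod p$ yields
$$
g=\bigl(u^{\mu_0}x_j^{-1},\,\dots,\,u^{\mu_{p-1}}x_j^{-1}\bigr)\,c^{\lambda(1)},\qquad
\mu_i=\frac{\lambda-\lambda(1)}{x-1}-\bigl(i+\lambda(1)\bigr)\nu_j\ \in\ \mathbb F_p[x].
$$
The degree bookkeeping then finishes it: $\deg\bigl((\lambda-\lambda(1))/(x-1)\bigr)\le\deg\lambda-1\le\deg f_j-1$ (or that summand vanishes), and $\deg\nu_j=\deg f_j-1$ since $f_j$ is monic, so each $\mu_i$ is either $0$ or of degree $\le\deg f_j$; hence every state $u^{\mu_i}x_j^{-1}$ again lies in $Y_j$.

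The only delicate point is keeping the permutational twist straight when multiplying the two wreath tuples: the factor $c^{\lambda(1)}$ coming from $u^{\lambda}$ shifts the index of the $x_j^{-1}$-tuple, which is exactly why the coefficient $i+\lambda(1)$ (rather than $i$) enters $\mu_i$. One must then check that this bounded shift, combined with the degree drop produced by dividing by $x-1$, keeps $\deg\mu_i\le\deg f_j$. Everything else is a routine degree count resting on $f_j(1)=1$.
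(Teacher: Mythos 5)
Your proof is correct and follows essentially the same route as the paper's: decompose $u^\lambda$ via Lemma~\ref{power}, multiply by $x_j^{-1}$ from (\ref{pintura300}) using the wreath product rule (\ref{product}), track the index shift caused by $c^{\lambda(1)}$, and bound the degree of the resulting exponents $\mu_i = \widetilde{\lambda} - (i+\lambda(1))\nu_j$ by $\deg f_j - 1 < \deg f_j$. The only cosmetic difference is that you spell out the reduction of the ``in particular'' clause and the reason $\nu_j$ is a polynomial (condition (\ref{1-condition})), which the paper leaves implicit.
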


\begin{proof} Assume that $\lambda \not= 0$ and $deg(\lambda) \leq deg(f_j)$. By the Euclidian algorithm 
$
\lambda = (x-1) \widetilde{\lambda} + \lambda(1), $ where  $deg(\widetilde{\lambda} ) < deg (\lambda)$ if $\widetilde{\lambda} \not= 0$. 
Recall that by Lemma \ref{power}, (\ref{pintura2}) and (\ref{pintura300}) 
$$
u^{\lambda} = ( u^{{\lambda - \lambda(1)} \over {x -1} })^{(1)} u^{\lambda(1)}
\hbox{  and  }
x_{j}^{-1} =\left( u^{-i\left( \frac{f_{j}-1}{x-1}\right)
}x_{j}^{-1}\right) _{0 \leq i \leq p-1}\text{.}$$
Then since $u = (0,1,2, \ldots, p-1)$
\begin{eqnarray*}
u^{\lambda} x_{j}^{-1} &&=(u^{\widetilde{\lambda}})^{(1)} u^{\lambda(1)} \left( u^{-i\left( \frac{f_{j}-1}{x-1}\right)
}x_{j}^{-1}\right) _{0 \leq i \leq p-1}\\
&&= (u^{\widetilde{\lambda}})^{(1)}  \left( u^{- \left((i)u^{\lambda(1)} \right)\left( \frac{f_{j}-1}{x-1}\right)
}x_{j}^{-1}\right) _{0 \leq i \leq p-1}  u^{\lambda(1)}
\\ && =  \left( u^{\widetilde{\lambda} - \left((i)u^{\lambda(1)} \right)\left( \frac{f_{j}-1}{x-1}\right)
}x_{j}^{-1}\right) _{0 \leq i \leq p-1}  u^{\lambda(1)} \\
\end{eqnarray*}%
has states $\{ u^{\widetilde{\lambda} - \left((i) u^{\lambda(1)} \right) \left( \frac{f_{j}-1}{x-1}\right)
}x_{j}^{-1} \}_{0 \leq i \leq p-1} $. Note that $\widetilde{\lambda} - \left( (i) u^{\lambda(1)}\right)\left( \frac{f_{j}-1}{x-1}\right)
$ is a polynomial in $\mathbb{F}_p[x]$ of degree smaller than the degree of $f_j$.
\end{proof}

\section{Commutative algebra approach} \label{Krull-dim}

In this section we show that the metabelian example considered in Section  \ref{Construction} generalizes significantly. What makes the example in Section \ref{Construction} work is that $A$ is of Krull dimension 1. 
We observe that similar results will not hold if A has Krull dimension 2. Recently Sidki and Dantas showed that the metabelian group $A \rtimes Q = \mathbb{Z} \wr \mathbb{Z}$ is not transitive self-similar  \cite{Alex-Said2}, note that in this example $A$ is a domain of Krull dimension 2.

\begin{lemma} \label{Krull1domain} Let $Q$ be a finitely generated abelian group and $P$ be a prime ideal in $\mathbb{Z} Q$ such that  $B = \mathbb{Z} Q / P$ has Krull dimension 1.
 Then every non-zero ideal $I$ of $B$ has finite index i.e. $B/ I$ is a finite ring.
\end{lemma}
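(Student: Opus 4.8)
The statement says: if $Q$ is a finitely generated abelian group and $P \triangleleft \mathbb{Z} Q$ is a prime with $B = \mathbb{Z} Q / P$ of Krull dimension $1$, then every nonzero ideal $I$ of $B$ has finite index. The plan is to argue that $B/I$ has Krull dimension $0$, is Noetherian, and has all residue fields finite; then a standard structure argument forces $B/I$ to be finite. First I would record that $B$ is a finitely generated commutative ring (as a quotient of $\mathbb{Z} Q = \mathbb{Z}[t_1^{\pm 1}, \dots, t_k^{\pm 1}]$), hence Noetherian, and it is an integral domain since $P$ is prime; its Krull dimension is $1$ by hypothesis. For a nonzero ideal $I$, the quotient ring $\bar B = B/I$ is again a finitely generated commutative Noetherian ring, and $\dim \bar B \le \dim B - 1 = 0$ because $I$ contains a nonzero element of the domain $B$, so no chain of primes of $B$ meeting $I$ trivially survives; concretely, any prime of $B$ containing $I$ is nonzero, hence (by $\dim B = 1$ and $B$ a domain) maximal, so all primes of $\bar B$ are maximal and $\dim \bar B = 0$.

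Next I would use that a finitely generated commutative ring that is Artinian (equivalently, Noetherian of dimension $0$) is a finite product of local Artinian rings $\prod_j \bar B_{\mathfrak m_j}$, so it suffices to show each factor is finite, and for that it suffices to show each residue field $\bar B / \mathfrak m_j$ is finite and that each local factor has finite length. The key input is a form of the Nullstellensatz over $\mathbb{Z}$: in a finitely generated commutative ring, the residue field at any maximal ideal is a finite field (this is the arithmetic Nullstellensatz — a finitely generated ring which is a field is finite). Applying this to $\bar B$ at each $\mathfrak m_j$ gives finite residue fields. Since $\bar B$ is Noetherian of dimension $0$ it is Artinian, so each $\bar B_{\mathfrak m_j}$ has a finite filtration with successive quotients equal to the residue field, hence is finite; therefore $\bar B = B/I$ is finite, as claimed.

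**Main obstacle.** The only nontrivial ingredient is the arithmetic Nullstellensatz statement that a finitely generated $\mathbb{Z}$-algebra which is a field must be finite — equivalently, that maximal ideals of finitely generated commutative rings have finite residue fields. This is classical (it follows from the general Nullstellensatz / Artin–Tate lemma applied over $\mathbb{Z}$: if the residue field had characteristic $0$ it would be a finitely generated $\mathbb{Q}$-algebra that is a field, hence a finite extension of $\mathbb{Q}$, but then $\mathbb{Z}$ would map onto something forcing a prime to be inverted — impossible; in characteristic $p$ it is a finitely generated $\mathbb{F}_p$-algebra that is a field, hence a finite extension of $\mathbb{F}_p$), and since the excerpt allows citing \cite{eisenbud} or \cite{algebrabook} for commutative algebra background I would invoke it from there. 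Everything else is routine: reducing to the local Artinian factors, using that dimension drops on quotienting a domain by a nonzero ideal, and that Artinian local rings with finite residue field are finite.

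**Alternative shorter route.** One can also argue directly: pick $0 \ne b \in I$; then $B/bB$ is a finitely generated $\mathbb{Z}$-algebra of Krull dimension $0$, hence finite by the above, and $B/I$ is a quotient of $B/bB$, so finite. This collapses the first two paragraphs into one step, with the same single nontrivial ingredient. I expect the paper to take essentially this route.
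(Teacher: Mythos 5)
Your proof is correct and reaches the same conclusion by essentially the same strategy as the paper: observe that any prime of $B$ containing the nonzero ideal $I$ must be maximal (since $B$ is a domain of Krull dimension $1$), that such maximal ideals have finite residue fields, and then conclude finiteness of $B/I$ via a finite filtration. The paper packages the last step differently: it passes through the radical $\sqrt{I}$, writes it as a finite intersection of maximal ideals $P_1 \cap \cdots \cap P_k$, shows $B/\sqrt{I}$ is finite by embedding it into the finite product $\prod_i B/P_i$, uses Noetherianness to get $\sqrt{I}^s \subseteq I$ for some $s$, and then inspects the successive quotients $\sqrt{I}^{i-1}/\sqrt{I}^i$ as finitely generated modules over the finite ring $B/\sqrt{I}$. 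You instead invoke the structure theorem for Artinian rings (a Noetherian ring of dimension $0$ is a finite product of local Artinian rings) and argue factor by factor; this is a standard equivalent repackaging. For the key finiteness of residue fields, the paper cites Roseblade's theorem that simple modules over group algebras of polycyclic groups are finite, which is the exact form needed for $\mathbb{Z}Q$, while you appeal to the arithmetic Nullstellensatz for finitely generated $\mathbb{Z}$-algebras; since $\mathbb{Z}Q$ is a finitely generated commutative ring, both citations give the same fact here, and your more general formulation is also fine. Your "alternative shorter route" at the end (quotient by a single nonzero element $b$ to immediately drop dimension, then cite finiteness of zero-dimensional finitely generated rings) is a clean streamlining of the same idea.
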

\begin{proof} Indeed consider the radical ideal of $I$ defined by 
    $$
    \sqrt{I} = \{ r \in B \mid \hbox{ there is } k \geq 1 \hbox{ such that } r^k \in I \}.
    $$
    There are only finitely many minimal prime ideals of $B$ above any fixed ideal, in particular above $I$. Let $P_1, \ldots, P_k$ be the minimal prime ideals above the ideal $I$. Then
    $$
    \sqrt{I} = P_1 \cap P_2 \cap \ldots \cap P_k.
    $$
    Since $B$ has Krull dimension 1 and $B$ is a domain we deduce $0 \subset P_i$ is  maximal  chain of prime ideals, hence  each $P_i$ is a maximal ideal in $B$. Since every maximal ideal in $\mathbb{Z} Q$ has finite index ( actually this holds in more general situation : simple  modules over group algebra of polycyclic groups are finite \cite{Jim}), we obtain that $B/ P_i$ is finite for every $i$. Then  $B / \sqrt{I}$ embeds in the finite ring $B/ P_1 \times B/ P_2 \times \ldots \times B/ P_k$, in particular
    \begin{equation} \label{finite}  B/ \sqrt{I} \hbox{ is finite}. \end{equation}
   Since $B$ is a Noetherian ring $\sqrt{I}$ is finitely generated as an ideal, say by $b_1, \ldots, b_m$. Then there is a positive integer $k$ such that $b_i^k \in \sqrt{I}$ for all $ 1 \leq i \leq m$, hence 
   \begin{equation} \label{inclusion}
   \sqrt{I}^s \subseteq I \hbox{ for } s = m(k-1) + 1.
   \end{equation}
   Observe that there is a filtration of ideals in $B$ 
   $$
   \sqrt{I}^s \subseteq \sqrt{I}^{s-1} \subseteq \ldots \subseteq \sqrt{I}^{i} \subseteq \sqrt{I}^{i-1} \subseteq \ldots \subseteq \sqrt{I}.
   $$ 
By Noetherianess each quotient $\sqrt{I}^{i-1}/ \sqrt{I}^i$ is a finitely generated $B$-module, where $\sqrt{I}$ acts as 0 i.e. is a finitely generated $B/ \sqrt{I}$-module. This together with (\ref{finite}) implies that  each quotient $\sqrt{I}^{i-1}/ \sqrt{I}^i$ is finite. Hence 
\begin{equation} \label{finite2}
\sqrt{I} /\sqrt{I}^s \hbox{ is finite.}
\end{equation}
By (\ref{finite}) and (\ref{finite2}) $B/ \sqrt{I}^s$ is finite. This together with (\ref{inclusion}) implies that $B/ I$ is finite.
   \end{proof}

\begin{theorem} \label{thm-general} Let $Q$ be a finitely generated abelian group and  $B$ be a finitely generated $\mathbb{Z} Q$-module such that $C_Q(B) = \{ q \in Q \mid B(q-1) = 0 \} = 1$. Let $P_1, \ldots, P_s$ be prime ideal  of $\mathbb{Z} Q$ such that 

1. there is a filtration of $\mathbb{Z} Q$-submodules of $B$
$$
B_0 = 0 \subset B_1 \subset B_2 \subset \ldots \subset B_i \subset B_{i+1} \subset \ldots \subset B_{r-1} \subset B_r = B,
$$
where $B_{i} / B_{i-1} \simeq \mathbb{Z} Q/ P_{\alpha_i}$ for  $1 \leq i \leq r$, where $\{ \alpha_1, \ldots, \alpha_{r} \} = \{ 1, 2, \ldots, s \}$;

2. there is some $d \in \{ 1, \ldots, r \}$  such that the domain $\mathbb{Z} Q/ P_i$ has Krull dimension  1 for every $1 \leq i \leq d$ and if $d < r$ the domain $\mathbb{Z} Q/ P_i$ has Krull dimension  0 (i.e. is finite) for every $d+1 \leq i \leq s$;

3. there is an element $\delta \in \mathbb{Z} Q \setminus (\cup_{ 1 \leq i \leq d} P_i)$ such that   the image  of $\delta$ in $\mathbb{Z} Q/P_i$ is not invertible for every $ 1 \leq i \leq d$.
  
 Then  $G = B \rtimes Q$ is a transitive self-similar group.
\end{theorem}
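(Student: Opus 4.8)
The plan is to realise $G$ by means of a simple virtual endomorphism, exactly as in the construction announced after Theorem~C and carried out for Theorems~A and~B. Fix $\delta$ as in condition~3 and set
$$H=(\delta B)\rtimes Q .$$
I would then try to establish three things: (i) $[G:H]<\infty$; (ii) the rule $f|_Q=\mathrm{id}_Q$, $f(\delta b)=b$ for $b\in B$, is a well-defined homomorphism $f\colon H\to G$; (iii) $f$ is simple. By the correspondence between simple virtual endomorphisms and faithful transitive state-closed tree actions recalled in the Preliminaries (formulas (\ref{form})--(\ref{product})), (i)--(iii) together say precisely that $G$ is a transitive self-similar group of degree $[G:H]$.

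\emph{Step 1 ($f$ well defined, and $[G:H]<\infty$).} From the filtration of condition~1 one reads off $Ass(B)\subseteq\{P_1,\dots,P_s\}$ (see \S\ref{comm}). Each domain $\mathbb{Z} Q/P_i$ with $i\le d$ is a finitely generated $\mathbb{Z}$-algebra of Krull dimension~$1$, hence has infinitely many maximal ideals; so by prime avoidance I may replace $\delta$ by an element still satisfying conditions~1--3 which moreover avoids every associated prime of $B$. Then multiplication by $\delta$ is injective on $B$, so $\delta\colon B\to\delta B$ is an isomorphism of $\mathbb{Z} Q$-modules; consequently $f=\delta^{-1}\rtimes\mathrm{id}_Q$ is well defined, is a homomorphism (as $\mathbb{Z} Q$ is commutative, $\delta^{-1}$ commutes with the $Q$-action), and is in fact an isomorphism of $H$ onto $G$. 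For finiteness of the index, apply the snake lemma to $0\to B_{i-1}\to B_i\to\mathbb{Z} Q/P_{\alpha_i}\to 0$ and induct up the filtration: it suffices to see that $\delta$ has finite cokernel on each factor $\mathbb{Z} Q/P_{\alpha_i}$, which is clear when $\alpha_i>d$ (the factor is finite by condition~2), and when $\alpha_i\le d$ is Lemma~\ref{Krull1domain} applied to the nonzero ideal $\bar\delta\,(\mathbb{Z} Q/P_{\alpha_i})$ of the one-dimensional domain $\mathbb{Z} Q/P_{\alpha_i}$ (nonzero by condition~3).

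\emph{Step 2 ($f$ simple).} Let $K\trianglelefteq G$ with $K\le H$ and $f(K)\subseteq K$, and put $M=K\cap B$, a $\mathbb{Z} Q$-submodule of $\delta B$. From $f(M)=\delta^{-1}M\subseteq M$ we get $M\subseteq\delta M\subseteq M$, so $M=\delta M=\dots=\delta^kM\subseteq\delta^kB$ for all $k$, and therefore $M\subseteq D:=\cap_{k\ge0}\,\delta^kB$. The crux is that $M=0$; this is the commutative-algebra counterpart of the computations $\cap_{s\ge1}I^s=0$ (proof of Theorem~A) and $m_i\in\cap_j(x-1)^j\mathbb{F}_p[x]=0$ (proof of Theorem~B). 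For the part of $B$ supported on $P_1,\dots,P_d$ it follows from condition~3: injectivity of $\delta$ gives $\delta D=D$, so any minimal prime of $Supp(D)$ is one of the $P_i$; if such a $P_i$ has $i\le d$, the non-invertibility of $\bar\delta$ in $\mathbb{Z} Q/P_i$ supplies a maximal ideal $\mathfrak{m}\supseteq P_i$ with $\delta\in\mathfrak{m}$, and since $\mathfrak{m}\in Supp(D)$ and $\delta D_{\mathfrak{m}}=D_{\mathfrak{m}}$, Nakayama forces $D_{\mathfrak{m}}=0$, a contradiction. Once $M=0$ the rest is easy: $[B,K]\subseteq B\cap K=0$, so the $Q$-components of the elements of $K$ lie in $C_Q(B)=1$, whence $K\subseteq B$ and $K=M=0$.

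I expect Step~2 to be the main obstacle. The commutator argument together with $C_Q(B)=1$ finishes things off cheaply once $M$ is pinned down, but pinning it down is delicate: on the finite part of $B$ — the submodules supported on the dimension-zero primes $P_{d+1},\dots,P_s$ — the element $\delta$ (having been chosen outside every associated prime) acts \emph{invertibly}, so $D$ need not vanish there, and one must invoke $C_Q(B)=1$ once more to see that $M$ is nevertheless trivial. This interplay of conditions~2--3 with the hypothesis $C_Q(B)=1$, on top of the Krull-type intersection argument above, is the technical heart of the theorem.
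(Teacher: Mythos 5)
Your overall strategy (simple virtual endomorphism via division by $\delta$) matches the paper, and your handling of the dimension-one primes is essentially correct, but there is a genuine gap exactly where you suspect one: the finite part of $B$.

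Concretely, suppose some $B_i/B_{i-1}$ is finite and, say, $B_1\simeq\mathbb{Z}Q/P_{\alpha_1}$ with $P_{\alpha_1}$ a dimension-zero (maximal) prime; this is allowed by conditions~1--2. After you replace $\delta$ by an element avoiding all associated primes of $B$, $\delta$ acts invertibly on the finite submodule $N:=B_1$, so $N=\delta N\subseteq\delta B=\widetilde{B}$ and $f(N)=\delta^{-1}N=N$. Thus $K:=N$ is a nontrivial normal subgroup of $G$ contained in $H=(\delta B)\rtimes Q$ with $f(K)\subseteq K$: your $f$ is provably \emph{not} simple. The hypothesis $C_Q(B)=1$ is of no help here, since it only says $Q$ acts faithfully on $B$; it does not forbid $B$ from having finite $\mathbb{Z}Q$-submodules, and such a submodule, once it survives inside $\bigcap_k\delta^k B$, gives a counterexample to simplicity. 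So "invoking $C_Q(B)=1$ once more" cannot close Step~2, and any choice of $\delta$ that makes $f$ injective on all of $B$ is doomed for the same reason.

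The missing idea is a preliminary replacement of $B$ by a finite-index submodule that has no finite layers. The paper first constructs (by induction on the length of the filtration, using residual finiteness of finitely generated metabelian groups) a $\mathbb{Z}Q$-submodule $C\le B$ of finite index such that $C\cap B_i=C\cap B_{i-1}$ whenever $B_i/B_{i-1}$ is finite. Then it sets $\widetilde{B}=C\delta$ and defines $f(c\delta)=c$ only for $c\in C$; well-definedness needs $\delta$ to be a nonzerodivisor \emph{on $C$} (not on all of $B$), which now follows because every infinite layer of $C$ sits with finite index in a one-dimensional domain $\mathbb{Z}Q/P_i$ avoided by $\delta$. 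With $C$ in place, the intersection $\bigcap_j C\delta^j$ really is $0$ (the localization-plus-Nakayama argument then runs layer by layer on the filtration of $C$), and simplicity follows. Without constructing $C$, the conclusion fails as shown above; so this construction, not a cleverer choice of $\delta$, is what makes the theorem go through.
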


\begin{proof}
 We will prove the existence of a homomorphism of $\mathbb{Z} Q$-modules
 \begin{equation} \label{f-hom}
 f : \widetilde{B} \to B
  \end{equation}
  such that $\widetilde{B}$ is a $\mathbb{Z} Q$-submodule of $B$ of finite index, i.e. $B/ \widetilde{B}$ is finite, and there is not a non-trivial  $\mathbb{Z} Q$-submodule $M$ of $\widetilde{B}$ such that $f(M) \subseteq M$. Then we set $H = \widetilde{B} \rtimes Q$ and extend $f$ to a virtual endomorphism of groups
  \begin{equation} \label{f-general}
  f : H \to G
   \end{equation}
   by defining $f\mid_Q = id_Q$.  Then it would follow that $f$ is a simple virtual endomorphism of $G$. For, suppose
   that $K$ is a normal subgroup of $G$ such that $f(K) \subseteq K$. Then $M = K \cap B = K \cap \widetilde{B}$ is a $\mathbb{Z} Q$-submodule of  $\widetilde{B}$ such that $f(M) \subseteq M$, hence $M = 0$.
   Then $B K \simeq B \times K$, hence $K$ acts trivially on $B$ via conjugation. Then for the canonical map $\pi : G \to G/ B = Q$ we have that $\pi(K) \subseteq C_{Q} (B) = \{ q \in Q \mid $ q acts trivially on $B \} = 1$.  Since $B \cap K = 0$ we deduce that $K \simeq \pi(K) = 1$, hence $f$ from (\ref{f-general})  is a simple endomorphism. Thus $G$ is a transitive self-similar group.

Note that for some $b_1, \ldots, b_r \in B$ we have
$$
B_i = b_1 \mathbb{Z} Q  + \ldots + b_i \mathbb{Z} Q \hbox{ for } 1 \leq i \leq r.
$$
 We claim that there is a $\mathbb{Z} Q$-submodule  $C$ of finite index in $B$ such that  if  $ B_{i} / B_{i-1} \simeq \mathbb{Z} Q/ P_{\alpha_i}$ is finite for some $ 1 \leq i \leq r$ then $C \cap B_{i} = C \cap B_{i-1}$. Since $C$ has finite index in $B$  whenever $B_{i}/ B_{i-1}$ is infinite, the quotient $(B_{i} \cap C)/ (B_{i-1} \cap C)$ is an ideal of finite index in 
 $B_{i}/ B_{i-1}$. To prove the existence of $C$ we induct on $r$, the case $ r = 1$ is obvious. By the inductive hypothesis there is a $\mathbb{Z} Q$-submodule $\widetilde{C}$ of $B_{r-1}$ such that if  $ B_{i} / B_{i-1} \simeq \mathbb{Z} Q/ P_{\alpha_i}$ is finite  for some $ 1 \leq i \leq r-1$ then $\widetilde{C} \cap B_{i} = \widetilde{C} \cap B_{i-1}$. If $B_r/ B_{r-1}$ is finite then we set $C = \widetilde{C}$. Now suppose that $B_r/ B_{r-1}$ is infinite. It suffices to find a $\mathbb{Z} Q$-submodule $C$ of finite index in $B$ such that $C \cap B_{r-1} \subseteq \widetilde{C}$. Consider the group $\widetilde{G} = (B/ \widetilde{C}) \rtimes Q$. Since every finitely generated  metabelian group is residually finite (actually by \cite{Hall} every finitely generated abelian-by-nilpotent group is residually finite) and $B_{r-1}/ \widetilde{C}$ is finite there is an epimorphism
 $$
 \theta : \widetilde{G} \to \Delta
 $$
 where $\Delta$ is a finite group  such that $Ker (\theta) \cap (B_{r-1} / \widetilde{C}) = 1$. Then $V = Ker (\theta) \cap (B / \widetilde{C})$ is a normal subgroup of $\widetilde{G}$ inside $B/ \widetilde{C}$, hence $V$ is a $\mathbb{Z} Q$-submodule of $B/ \widetilde{C}$.  Finally we define $C$ as the preimage of $V$ in $B$ under the canonical epimorphism $B \to B/ \widetilde{C}$.

Consider the virtual endomorphism
$$
f : \widetilde{B} =  C \delta \to B
$$
given by $f( c \delta ) = c$ for $c \in C$. Observe that $f$ is a homomorphism of $\mathbb{Z} Q$-module and $f$ is well defined since $\delta$ is not a zero divisor in every quotient $(C \cap B_i)/ (C \cap B_{i-1})$ for $ 1 \leq i \leq r$, hence $\delta$ is not a zero divisor in $C$. Indeed either $(C \cap B_i)/ (C \cap B_{i-1}) = 0$ or $(C \cap B_i)/ (C \cap B_{i-1})$ has finite index in $B_i/ B_{i-1}$ and $B_i/ B_{i-1}$ has Krull dimension 1. By assumption $\delta$ is not a zero divisor in $B_i/ B_{i-1}$ if $B_i / B_{i-1}$ has Krull dimension 1.

Observe that since  $(C \cap B_i)/ (C \cap B_{i-1}) = 0$ or $(C \cap B_i)/ (C \cap B_{i-1})$ has finite index in $B_i/ B_{i-1}$  and $B_i/ B_{i-1}$ has Krull dimension 1, there is a filtration of $C$ with quotients $M_j$ cyclic $\mathbb{Z} Q$-modules, each of Krull dimension at most 1. Then
by Lemma \ref{Krull1domain}   $M_{j}/ M_{j} \delta$ is finite for each $j$ and so $C/ C \delta$ is finite. Since $B/C$ is finite we deduce that $\widetilde{B} = C \delta$ has finite index in $B$.

Suppose that $M$ is a $\mathbb{Z} Q$-submodule of $\widetilde{B}$ such that $f(M) \subseteq M$. Then $M \subseteq \cap_{j \geq 1}  C \delta^j $ and
 $(M \cap B_i)/ (M \cap B_{i-1}) = (M \cap B_i \cap C)/ (M \cap B_{i-1} \cap C)$ is a $\mathbb{Z} Q$-submodule of $(B_i \cap C)/(B_{i-1} \cap C)$.
If $B_i/B_{i-1}$ is finite, we deduce that  $(B_i \cap C)/(B_{i-1} \cap C) =0$ and so $M \cap B_i = M \cap B_{i-1}$.

 From now on suppose that $B_i/ B_{i-1}$ is infinite, so it is a domain of Krull dimension 1.
We claim  that for the image $M_i$  of $M\cap B_i$ in $V_i = B_i  / B_{i-1}$ we have $$M_i \subseteq \cap_{j \geq 1}  V_i \delta^j.$$ Indeed to prove this observe that $\delta$ is not a zero divisor of $C / (C \cap B_i)$ since $C / (C \cap B_i)$ has filtration with quotients $(C \cap B_j)/ (C \cap B_{j-1})$ for $j \geq i+1$ that are either 0 or are finite index ideals in $V_j$ and $V_j$ has Krull dimension 1 and in this case by assumption $\delta$ is not a zero divisor. Now since $\delta$ is not a zero divisor of $C / (C \cap B_i)$ we have that $C \delta \cap B_i = (C \cap B_i) \delta$ and by induction on $j$ we have $C \delta^j \cap B_i = (C \cap B_i) \delta^j$. Thus
$$M \cap B_i \subseteq B_i \cap (\cap_{j \geq 1} C \delta^j) \subseteq \cap_{j \geq 1} (C \cap B_i) \delta^j \subseteq \cap_{j \geq 1} B_i \delta^ j.$$

Since the image of $\delta$ is not invertible in $V_i$ we get that $V_i \delta$ is a proper ideal of $V_i$.
Let $\widetilde{Q}_i$ be a maximal ideal of $V_i$ above $V_i \delta$. 
 Consider the multiplicatively closed set $S_i =  V_i \setminus \widetilde{Q}_i$ and the localization $V_i S_i^{-1}$ i.e. $V_i S_i^{-1}$ is a subring  of the field of fractions of $V_i$. Then $Q_{0,i} = \widetilde{Q}_i S_i^{-1}$ is the unique maximal ideal of $A_i = V_i S_i^{-1}$ i.e. $A_i$ is a local ring. Then the Jacobson radical $J(A_i) = Q_{0,i}$ and for $I_i = \cap_{j \geq 1} Q_{0,i}^j$ it is easy to see that $I_i = 0$, hence $\cap_{j \geq 1}  V_i \delta^j = 0$ and consequently  $0 = M_i = (M \cap B_i)/ (M \cap B_{i-1})$. Indeed if $I_i \not= 0$ then its radical $\sqrt{I_i}$ is the intersection of all primes ideals above it. Since $A_i$ has Krull dimension 1 any prime ideal above $\sqrt{I_i}$ is a maximal one, hence is $Q_{0,i}$ and $\sqrt{I_i} = Q_{0,i}$. By Noetherianess of $A_i$ there is an integer $k$ such that $Q_{0,i}^k = \sqrt{I_i}^k \subseteq I_i = \cap_{j \geq 1} Q_{0,i}^j$. Thus $Q_{0,i}^k = Q_{0,i}^{k+1} = I_i = I_i Q_{0,i}$ and by
  Nakayama lemma $I_i = 0$, a contradiction.

Note that we have proved that $M \cap B_j = M \cap B_{j-1}$ for every $1 \leq j \leq r$, hence $M  = 0$ and $f$ is a simple endomorphism.
\end{proof}

\begin{theorem}  \label{conditions-Krull} Let $Q$ be a finitely generated abelian group and  $B$ be a finitely generated $\mathbb{Z} Q$-module of Krull dimension 1  such that $C_Q(B) = \{ q \in Q \mid B(q-1) = 0 \} = 1$. Then $G = B \rtimes Q$ is a transitive self-similar group.
\end{theorem}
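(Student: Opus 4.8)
The plan is to deduce this from Theorem \ref{thm-general} by checking its three hypotheses for the module $B$. First I would record the prime filtration supplied by the commutative algebra preliminaries of Section \ref{comm}: since $\mathbb{Z} Q$ is Noetherian and $B$ is finitely generated, there is a chain $0 = B_0 \subset B_1 \subset \ldots \subset B_r = B$ of $\mathbb{Z} Q$-submodules with each quotient $B_i/B_{i-1} \cong \mathbb{Z} Q/P_{\alpha_i}$ for a prime ideal of $\mathbb{Z} Q$; letting $P_1, \ldots, P_s$ be the distinct primes that occur gives hypothesis~1. Every such $P_j$ contains $\mathrm{ann}_{\mathbb{Z} Q}(B)$, since an element annihilating $B$ annihilates the subquotient $B_i/B_{i-1} \cong \mathbb{Z} Q/P_{\alpha_i}$ and hence lies in $P_{\alpha_i}$; therefore $\mathbb{Z} Q/P_j$ is a quotient of $\mathbb{Z} Q/\mathrm{ann}_{\mathbb{Z} Q}(B)$ and its Krull dimension is at most the Krull dimension of $B$, which equals $1$. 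Moreover at least one $P_j$, namely a minimal prime over $\mathrm{ann}_{\mathbb{Z} Q}(B)$, has $\mathbb{Z} Q/P_j$ of Krull dimension exactly $1$. Relabelling the distinct primes so that those of quotient-dimension $1$ come first, say $P_1, \ldots, P_d$ with $d \ge 1$ and $P_{d+1}, \ldots, P_s$ of quotient-dimension $0$, gives hypothesis~2.

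The essential point is hypothesis~3: I must produce $\delta \in \mathbb{Z} Q \setminus \bigcup_{j=1}^d P_j$ whose image in each domain $\mathbb{Z} Q/P_j$, $1 \le j \le d$, is not invertible. For each such $j$, choose a maximal ideal $\mathfrak{m}_j$ of the ring $\mathbb{Z} Q/P_j$ and let $M_j$ be its preimage in $\mathbb{Z} Q$; then $\mathbb{Z} Q/M_j \cong (\mathbb{Z} Q/P_j)/\mathfrak{m}_j$ is a field, so $M_j$ is a maximal ideal of $\mathbb{Z} Q$ strictly containing $P_j$. I claim $\bigcap_{j=1}^d M_j \not\subseteq P_k$ for any $k \le d$: otherwise the product $M_1 \cdots M_d \subseteq \bigcap_{j=1}^d M_j$ lies in the prime $P_k$, so $M_j \subseteq P_k$ for some $j$, whence $M_j = P_k$ by maximality of $M_j$; but $\mathbb{Z} Q/M_j$ has Krull dimension $0$ while $\mathbb{Z} Q/P_k$ has Krull dimension $1$, a contradiction. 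By prime avoidance there is $\delta \in \bigcap_{j=1}^d M_j$ with $\delta \notin \bigcup_{k=1}^d P_k$; since $\delta \in M_j$ its image in $\mathbb{Z} Q/P_j$ lies in the proper ideal $\mathfrak{m}_j$ and is therefore a non-unit, for every $j \le d$. Thus hypothesis~3 holds, and Theorem \ref{thm-general} yields that $G = B \rtimes Q$ is a transitive self-similar group.

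I expect the verification of hypothesis~3 to be the only substantive step; it rests on the dimension-theoretic observation that a maximal ideal of $\mathbb{Z} Q$ cannot equal a prime $P$ with $\mathbb{Z} Q/P$ of Krull dimension $1$, which is precisely where the hypothesis that $B$ has Krull dimension exactly $1$ (so that $\mathbb{Z} Q/P_j$ is never a field for $j \le d$) enters. The construction of the prime filtration, the bound on the dimensions of the $\mathbb{Z} Q/P_j$, and prime avoidance are all standard, as recalled in Section \ref{comm}; the hypothesis $C_Q(B) = 1$ is simply passed along to Theorem \ref{thm-general}, where it ensures that the simple virtual endomorphism built from $\delta$ detects no nontrivial normal subgroup outside $B$.
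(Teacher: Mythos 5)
Your proposal is correct and follows the same overall strategy as the paper: reduce to Theorem \ref{thm-general}, and construct $\delta$ inside an intersection of maximal ideals lying over the dimension-one primes $P_1, \ldots, P_d$, then extract $\delta \notin \cup_k P_k$ by prime avoidance. However, your verification that $\bigcap_j M_j \not\subseteq P_k$ for each $k \le d$ is cleaner and more direct than the paper's. The paper supposes the negation of prime avoidance, insists on choosing the maximal ideals $Q_i$ pairwise distinct (which tacitly requires knowing that each dimension-one prime in the Jacobson ring $\mathbb{Z}Q$ has more than one maximal ideal above it), passes from $\cap_i Q_i$ to $\prod_i Q_i$ via coprimality, runs a minimal-counterexample argument to reduce to a single $\widetilde{Q} \subseteq \cup_i P_i$, and finally writes out prime avoidance by hand to land in $\widetilde{Q} \subseteq P_j$. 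Your version sidesteps all of this: since $\prod_j M_j \subseteq \cap_j M_j$, an inclusion $\cap_j M_j \subseteq P_k$ would give $M_j \subseteq P_k$ for some $j$ by primality, forcing $M_j = P_k$ by maximality and contradicting the Krull dimensions; no distinctness of the $M_j$ is needed. Both arguments ultimately rest on the same fact—a maximal ideal of $\mathbb{Z}Q$ cannot coincide with a prime whose quotient has Krull dimension one—and your handling of hypotheses 1 and 2 of Theorem \ref{thm-general} matches the paper's.
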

\begin{proof} By the preliminaries on commutative algebra, see Section \ref{comm} , 
there are prime ideals $P_1, \ldots, P_s$ of $\mathbb{Z} Q$ such that 
there is a filtration of $\mathbb{Z} Q$-submodules of $B$
$$
B_0 = 0 \subset B_1 \subset B_2 \subset \ldots \subset B_i \subset B_{i+1} \subset \ldots \subset B_{r-1} \subset B_r = B,
$$
where $B_{i} / B_{i-1} \simeq \mathbb{Z} Q/ P_{\alpha_i}$ for  $1 \leq i \leq r$, $\{ \alpha_1, \ldots, \alpha_{r} \} = \{ 1, 2, \ldots, s \}$ and  furthermore
\begin{equation} \label{ass}
Ass(B) \subseteq \{ P_1, \ldots, P_{s} \} \subseteq Supp (B),
\end{equation}
where the minimal elements of  the three sets of primes considered above are the same. Since $B$ has Krull dimension 1, each quotient $B_i/ B_{i-1} \simeq \mathbb{Z} Q/ P_{\alpha_i}$ has Krull dimension at most 1 i.e. either has Krull dimension 1 or has Krull dimension 0, in this case it is finite.

   Let $P_1, \ldots, P_d$ be the ideals among the elements of the set of prime ideals $\{ P_1, \ldots, P_s \}$ such that $\mathbb{Z} Q/ P_i$ has Krull dimension 1.
      Let $Q_i$ be a maximal ideal of $\mathbb{Z} Q$ such that ${P}_i \subset Q_i$ for $ 1 \leq i \leq d$. We claim that
      there is an element 
      \begin{equation} \label{delta1} 
      \delta \in \mathbb{Z} Q \setminus (\cup_{ 1 \leq i \leq d} {P}_i)
      \end{equation}  such that for every $ 1 \leq i \leq d$  the image  of $\delta$ in $\mathbb{Z} Q/{P}_i$ is not invertible. It suffices to choose 
      \begin{equation} \label{delta2}
      \delta \in \cap_{1 \leq i \leq d} Q_i
      \end{equation}
       to guarantee that 
         the image  of $\delta$ in $\mathbb{Z} Q/{P}_i$ is not invertible for $ 1 \leq i \leq d$. This will be impossible if
         \begin{equation} \label{max-prime}
          \cap_{1 \leq i \leq d} Q_i \subseteq 
          \cup_{ 1 \leq i \leq d} {P}_i.
          \end{equation}
					Suppose from now on that (\ref{max-prime}) holds.
 We can choose $Q_i \not= Q_j$ for $i \not= j$. Then for $i \not= j$ we have that  $Q_i$ and $Q_j$ are coprime ideals in the sense that $Q_i + Q_j = \mathbb{Z} Q$, hence $  \cap_{1 \leq i \leq d} Q_i = \prod_{1 \leq i \leq d} Q_i$ and so
 $$
 \prod_{1 \leq i \leq d} Q_i \subseteq 
 \cup_{1 \leq i \leq d} {P}_i.
 $$
 Assume that $j$ is the minimal number such that for some $1 \leq i_1 < \ldots < i_j \leq d$ we have
 \begin{equation} \label{pintura3}
 Q_{i_1} \ldots Q_{i_j} \subseteq  
  \cup_{1 \leq i \leq d} {P}_i.
 \end{equation}
 If $j \geq 2$ then there are elements $q_2 \in Q_{i_2}, \ldots, q_j \in Q_{i_j}$ such that $q_2 \ldots q_j \notin   \cup_{1 \leq i \leq d} {P}_i.$ Since
 $Q_{i_1} q_2 \ldots q_j \in  Q_{i_1} \ldots Q_{i_j} \subseteq    \cup_{1 \leq i \leq d} {P}_i$ and each $P_i$ is a prime ideal  we deduce that for $\widetilde{Q} = Q_{i_1}$  we have
 \begin{equation} \label{pintura4}
\widetilde{ Q}  \subseteq   \cup_{1 \leq i \leq d} {P}_i,
 \end{equation}
 a contradiction with the minimality of $j$. Hence $j = 1$
  and  (\ref{pintura4}) still  holds. 
  
  \noindent
 Let $J_i = \widetilde{Q} \cap P_i$, hence
 $$
 \widetilde{Q} = \cup_{1 \leq i \leq d} J_i.
 $$ 
 Choose $$q_j \in (\cap_{1 \leq i \not= j \leq d} J_i) \setminus J_j.$$ 
 Then 
 $$
 q = q_1 + \ldots + q_{d} \in \widetilde{Q} = J_1 \cup \ldots \cup J_{d}
 $$
 so for some $i$ we have  $q \in J_i$. Hence
 $
 q_i = q - (\sum_{j \not= i} q_j) \in J_i
 $, a contradiction. Then for some $ 1 \leq j \leq d$ we have $\cap_{1 \leq i \not= j \leq d} J_i \subseteq J_j$, 
hence $$\prod_{1 \leq i \not= j \leq d} J_i  \subseteq \cap_{1 \leq i \not= j \leq d} J_i  \subseteq J_j \subseteq P_j.$$ Since $P_j$ 
is a prime ideal, for some $i \not= j$ we have $$\widetilde{Q} P_i \subseteq \widetilde{Q} \cap P_i = J_i \subseteq P_j.$$ Using again that $P_j$ is a prime ideal and $P_i \subsetneq P_j$ for $1 \leq i \not= j \leq d$ we deduce that
 $$
\widetilde{ Q} \subseteq {P}_j.
 $$
 Thus the infinite ring (of Krull dimension 1) $\mathbb{Z} Q/ {P}_j$ is a quotient of the finite ring (of Krull dimension 0)  $\mathbb{Z} Q/ \widetilde{Q}$, a contradiction. 

 Then there is an element $\delta \in \mathbb{Z} Q$ such that
 (\ref{delta1}) and (\ref{delta2}) hold and we can apply the previous theorem.
 \end{proof}

\begin{cor} Let $G = B \rtimes Q$ be a  constructible group, where $B$ and $Q$ are abelian such that $C_Q(B) = 1$. Then $G$ is a transitive self-similar group.
\end{cor}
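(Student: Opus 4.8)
The plan is to obtain the Corollary as a direct consequence of Theorem~\ref{conditions-Krull}. Since $G$ is constructible it is finitely generated (indeed finitely presented), so $Q$ is a finitely generated abelian group, $B$ is a finitely generated $\mathbb{Z}Q$-module, and $C_Q(B)=1$ is assumed; hence the only point requiring argument is that the Krull dimension of $B$ as a $\mathbb{Z}Q$-module is at most $1$. Granting this, if the Krull dimension is $1$ we apply Theorem~\ref{conditions-Krull} verbatim; if it is $0$, then $B$ has finite length, hence is finite, so by $C_Q(B)=1$ the group $Q$ embeds into the finite group $\mathrm{Aut}(B)$ and $G$ is finite --- and a finite group is a transitive self-similar group via its regular permutation representation, i.e. the virtual endomorphism $\{1\}\to G$ is vacuously simple.

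So the heart of the matter is the lemma: \emph{a constructible metabelian group $G=B\rtimes Q$ has $B$ of Krull dimension $\leq 1$ over $\mathbb{Z}Q$.} The quickest route is to quote the Bieri--Strebel structure theory of constructible metabelian groups, which describes exactly the finitely generated $\mathbb{Z}Q$-modules $B$ that occur and in particular forces $B$ to be $m$-tame for every $m$ (equivalently, $0$ is not in the convex hull of $\Sigma^c_B(Q)$). I would then argue as follows. Suppose the Krull dimension of $B$ is $\geq 2$ and pick a prime $P$ minimal over $\mathrm{ann}_{\mathbb{Z}Q}(B)$ with $\dim(\mathbb{Z}Q/P)=\dim B\geq 2$; then $P$ is associated, so $\mathbb{Z}Q/P$ embeds into $B$ and $\Sigma^c_{\mathbb{Z}Q/P}(Q)\subseteq\Sigma^c_B(Q)$. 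If $\mathbb{Z}Q/P$ has positive characteristic $p$, its fraction field $K$ is a function field of transcendence degree $\geq 1$ over a finite field; the image of $Q$ in $K^{*}$ is infinite (it generates the infinite ring $\mathbb{Z}Q/P$), every valuation $v$ of $K$ that is non-trivial on $Q$ satisfies $\mathbb{Z}Q/P\not\subseteq\mathcal{O}_v$ and so contributes a class $[\chi_v]\in\Sigma^c_{\mathbb{Z}Q/P}(Q)$, and the product formula on $K$ then produces a non-trivial vanishing positive combination of such classes --- contradicting $m$-tameness of $B$. If $\mathbb{Z}Q/P$ has characteristic $0$, reducing modulo a generic prime $p$ and passing to a minimal prime produces a subquotient of $B$ isomorphic to some $\mathbb{F}_pQ/\mathfrak{p}$ of Krull dimension $\geq 1$, to which the characteristic-$p$ case applies. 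Either way we reach a contradiction, so the Krull dimension of $B$ is $\leq 1$.

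The main obstacle is the lemma, and inside it the product-formula step: one has to pin down which valuations of $K$ actually contribute to $\Sigma^c$ --- these are exactly the ones non-trivial on $Q$, because elements of $Q$ are units of $\mathbb{Z}Q/P$ --- take a little care with valuations of rank $>1$, and verify that the characteristic-zero reduction retains enough of $\Sigma^c$. If one simply quotes the Bieri--Strebel classification in the form \`\`$B$ has Krull dimension $\leq 1$'', the argument collapses to the reduction in the first paragraph.
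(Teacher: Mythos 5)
Your proposal is correct in outline and you land on the same essential reduction as the paper (get the Krull dimension of $B$ down to $\leq 1$ and invoke Theorem~\ref{conditions-Krull}), but the intermediate lemma you set up is attacked by a genuinely different and considerably heavier route. The paper's proof is one line: a constructible soluble group has finite Pr\"ufer rank, and for a finitely generated metabelian $G = B\rtimes Q$ finite Pr\"ufer rank means $\dim_{\mathbb{Q}}(B\otimes\mathbb{Q})<\infty$ and $\mathrm{tor}(B)$ finite. From this, Krull dimension $\leq 1$ drops out immediately: any associated prime $P$ with $\mathbb{Z}Q/P$ of positive characteristic gives a torsion submodule of $B$, hence a finite one, hence $\mathbb{Z}Q/P$ is finite (Krull dimension $0$); and in characteristic $0$, finite $\mathbb{Q}$-rank of $\mathbb{Z}Q/P$ forces the fraction field to have transcendence degree $0$ over $\mathbb{Q}$, so $\dim(\mathbb{Z}Q/P)\leq 1$. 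Your route instead passes through ``$m$-tame for all $m$'' and a product-formula argument on the function field of $\mathbb{Z}Q/P$, then a reduction modulo $p$ in characteristic $0$. This works (tameness passes to subquotients because $\Sigma^c_A = \Sigma^c_{A'}\cup\Sigma^c_{A''}$ for a short exact sequence $A'\to A\to A''$, and the valuation-theoretic argument is sound), but it is reproving, via $\Sigma$-theory, a coarser fact that the Pr\"ufer-rank characterization of constructible groups gives for free. One thing you do more carefully than the paper: you separate out the Krull dimension $0$ case explicitly (finite $B$, hence finite $Q$ by $C_Q(B)=1$, hence $G$ finite, hence self-similar via the regular representation); the paper's proof states the Krull dimension ``is 1'' and tacitly subsumes the degenerate case.
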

\begin{proof}
By definition constructible soluble groups are obtained from the trivial group using ascending HNN extensions and finite extensions, where the base and associated subgroups are already constructible. Thus  $G$ is of finite Pr\"ufer rank, hence  the Krull dimension of $B$ is 1.  Then we can apply Theorem \ref{conditions-Krull}.
\end{proof}

\section{A localised version of Dantas-Sidki example} \label{local}

In \cite{Alex-Said} Dantas and Sidki considered the following metabelian group
$$
G_{p,d} = C_p \wr \mathbb{Z}^d = \langle a \rangle \wr \langle x_1, \ldots, x_d \rangle,
$$
where $p$ is a prime number and $C_p$ is the cyclic group of order $p$.
Note that $G_{p,d}$ is finitely generated but not finitely presented.
The normal closure $A$ of $a$ in $G_{p,d}$ is $\mathbb{F}_p[Q]$, where $Q = \langle x_1, \ldots, x_d \rangle \simeq \mathbb{Z}^d$. Set
$$Q_0 = \langle x_1^p, x_2, \ldots, x_d \rangle \hbox{ and } H = G' Q_0,$$
where $G'$ is the commutator subgroup of $G$.
Thus $A_0 = A \cap H \simeq Aug(\mathbb{F}_p Q)$ is the augmentation ideal of the group algebra and
$$H = A_0 \rtimes Q_0.$$  Furthermore as shown in \cite{Alex-Said} there is a simple virtual endomorphism
$$
f : H \to G
$$ given by
$$ f(a^{x_1^i - 1}) = a^{i} \hbox{ for } 1 \leq i \leq p-1 \hbox{ and } f(a^{z-1}) = 1 \hbox{ for } z \in Q_0 $$
and
$$
f(x_1^p) = x_2, f(x_i) = x_{i+1} \hbox{ for } 2 \leq i \leq d-1, f(x_d) = x_1.
$$ 
In this section we show how localizing the above example we can construct transitive self-similar metabelian group $ \widetilde{A} \rtimes \widetilde{Q}$ that is finitely presented. In this example the Krull dimension of $\widetilde{A}$ is precisely $d$, thus showing that there are finitely presented examples where the Krull dimension is not 1.

Let $g \in \mathbb{F}_p[x] \setminus (\cup_{j \geq 0} \mathbb{F}_p x^j \cup (x-1)\mathbb{F}_p[x])$. Let 
$$
s_i = g(x_i) \in \mathbb{F}_p[x_i].
$$
Then
$$ 
S = \{ s_1^{z_1} \ldots s_d^{z_d} \mid z_i \geq 0, 1 \leq i \leq d \} \subseteq \mathbb{F}_p[x_1, \ldots, x_d]
$$
is a multiplicatively closed subset of $A \setminus \{ 0 \}$.
Define
$$\widetilde{G} = \widetilde{A} \rtimes \widetilde{Q},
\hbox{  where }\widetilde{A}\hbox{  is the localization }A S^{-1},$$  we view $A$ as right $\mathbb{F}_p Q$-module with $Q$ action (via conjugation) given by multiplication, $$\widetilde{Q} = Q \times Q_1, \hbox{ where }Q_1 = \langle y_1, \ldots, y_d \rangle$$ and each $y_i$ acts via conjugation (on the right) on $A S^{-1}$ by multiplication with $s_i \in S$.
Note that
$$
\widetilde{A} = A S^{-1} = \mathbb{F}_p[x_1^{\pm 1}, {1 \over g(x_1)}] \otimes \ldots \otimes \mathbb{F}_p[x_d^{\pm 1}, {1 \over g(x_d)}] .
$$   Set
$$
\widetilde{Q}_0 = Q_0 \times Q_{1,0},
$$
where  $Q_{1,0} = \langle y_1^p, y_2, \ldots, y_d \rangle$.
We extend first the virtual endomorhism $f$ to
$$
\tilde{f} : \widetilde{Q}_0 \to  \widetilde{Q}
$$
by $$\tilde{f}(y_1^p) = y_2, \tilde{f}(y_i) = y_{i+1} \hbox{ for } 2 \leq i \leq d - 1, \tilde{f}(y_d) = y_1.
$$
Define an isomorphism of groups
 $$
 \mu : Q \times \langle S \rangle \to \widetilde{Q}$$
 by $\mu(s_i) = y_i$, $\mu(x_i) = x_i$ for $ 1 \leq i \leq d$.
 We extend the virtual endomorhism $f$ to 
 $$
 \tilde{f} : \widetilde{H} = A_0 S^{-1} \rtimes \widetilde{Q}_0  \to  \widetilde{G}
 $$
 in the following way : 
for $a \in A_0, s \in S$ we choose $ s_1 \in S$ such that $\mu(s s_1) \in \widetilde{Q}_0$ define
$$
\tilde{f}(a/s) = f(a^{s_1})/ \mu^{-1}( \tilde{f}(\mu(s s_1))) \in A S^{-1}
$$
where by abuse of notation we write $a^{ s_1 }$ for $a^{\mu(s_1)}  \in A_0( \mathbb{F}_p Q) = A_0$. By abuse of notation we write $\tilde{f}(s s_1)$ for $\mu^{-1}( \tilde{f}(\mu(s s_1)))$, hence 
$$
\tilde{f}(a/s) = f(a^{s_1})/  \tilde{f}(s s_1) \in A S^{-1}.
$$

\begin{lemma}
$\tilde{f}$ is well-defined. 
\end{lemma}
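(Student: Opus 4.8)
The plan is to verify that the formula
$$
\tilde f(a/s) = f(a^{s_1})/\tilde f(s s_1)
$$
does not depend on the auxiliary choice of $s_1 \in S$ with $\mu(s s_1) \in \widetilde Q_0$, and that (given this) it genuinely defines a group homomorphism on $\widetilde H$. First I would record the structural facts that make the definition sensible at all: $\mu$ is an isomorphism $Q \times \langle S\rangle \to \widetilde Q$ carrying $s_i$ to $y_i$, so $\mu^{-1}(\widetilde Q_0)$ is a subgroup of finite index in $Q \times \langle S\rangle$, and for any $s \in S$ there indeed exists $s_1 \in S$ with $s s_1$ in that subgroup (one may take, for instance, a suitable power of $s$ so that the exponent of $s_1$ in the first coordinate becomes divisible by $p$). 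Also $A_0$ is an $\mathbb F_p Q$-module and $A_0 S^{-1} \rtimes \widetilde Q_0$ makes sense because $Q_{1,0}$ acts on $A_0 S^{-1}$ by the relevant multiplications. These observations are routine but should be stated so the well-definedness argument has a clean footing.

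The core of the argument is independence of the choice of $s_1$. Suppose $s_1, s_1' \in S$ both satisfy $\mu(s s_1), \mu(s s_1') \in \widetilde Q_0$. Then $\mu(s_1'/s_1) = \mu(s s_1')\mu(s s_1)^{-1} \in \widetilde Q_0$, i.e. writing $t = s_1'/s_1 \in \langle S\rangle$ we have $\mu(t) \in \widetilde Q_0$ and moreover $t$ lies in $\langle S\rangle$, so it is of the form $\prod s_i^{m_i}$ with the first exponent divisible by $p$ (up to reindexing under $\tilde f$). I would then compute both candidate values and show they agree:
$$
f(a^{s_1'})/\tilde f(s s_1') = f(a^{s_1 t})/\tilde f(s s_1 t) = \bigl(f(a^{s_1})\bigr)^{\tilde f(t)}\big/\bigl(\tilde f(s s_1)\,\tilde f(t)\bigr) = f(a^{s_1})/\tilde f(s s_1),
$$
where the middle equality uses that $f$ (on $H$) intertwines the $Q_0$-action with the $\widetilde Q$-action via $\tilde f$ restricted to $Q_0$ — precisely the compatibility built into the original Dantas--Sidki endomorphism $f: H \to G$ and into the extension $\tilde f$ on $\widetilde Q_0$ — and the last equality is the observation that multiplication by $\tilde f(t)$ in the $\widetilde A \rtimes \widetilde Q$ picture, conjugating $f(a^{s_1})$ and then dividing by the extra $\tilde f(t)$ factor, cancels. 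This hinges on checking that $t \mapsto \tilde f(t)$ is the same whether we regard $t$ as an element of $\langle S\rangle$ multiplying inside $A_0 S^{-1}$ or as the $\widetilde Q_0$-element $\mu(t)$; that is immediate from the definitions of $\mu$ and the extension $\tilde f$ on $\widetilde Q_0$.

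Having settled that $\tilde f$ is a well-defined map, I would then check it is a homomorphism of $\mathbb F_p Q$-modules on the $\widetilde A$-part — additivity $\tilde f(a/s + a'/s) = \tilde f(a/s) + \tilde f(a'/s)$ follows since we may use a common $s_1$, and compatibility with the original $f$ on $A_0$ follows by taking $s = e$, $s_1 = e$ — and that $\tilde f$ restricted to $\widetilde Q_0$ is the homomorphism already defined, so that the semidirect-product multiplication is respected. The main obstacle is the bookkeeping in the independence-of-$s_1$ step: one must be careful that the two notions of ``acting by $t$'' (module multiplication versus group conjugation through $\mu$) coincide and that $f$ really does commute with this action as an intertwiner $f(a^{z}) = f(a)^{\tilde f(z)}$ for $z \in Q_0$; once that identity is in hand, the rest is a short, essentially formal verification. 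No deep input from commutative algebra is needed here — this lemma is purely a consistency check that the localized $\tilde f$ extends the original $f$ coherently.
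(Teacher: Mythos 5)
Your argument is correct and takes essentially the same route as the paper: the paper cross-multiplies the two candidate fractions and chains equalities using that $f$ intertwines the relevant actions (and that $\tilde f|_{\widetilde Q_0}$ is a homomorphism), whereas you normalize by passing to $t = s_1'/s_1$ and cancel $\tilde f(t)$, but this is the same computation reorganized. Both your argument and the paper's quietly rely on the intertwining identity $f(b\,t) = f(b)\cdot\tilde f(t)$ for $t \in \langle S\rangle$ with $\mu(t) \in \widetilde Q_0$, which is not quite a formal consequence of $f$ being a $\mathbb Z Q_0$-module map but does hold here (using $f(bx_1)=f(b)$, $\mathbb F_p Q_0$-linearity, and $g(x_1)^p = g(x_1^p)$ over $\mathbb F_p$); it would strengthen the write-up to record that identity explicitly as the key ingredient.
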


\begin{proof}
Suppose that $s_1, s_2 \in S$ are such that
$\mu(s s_1) \in \widetilde{Q}_0$ and $\mu(s s_2) \in \widetilde{Q}_0$. Then
$$
f(a^{ s_1})^{ \tilde{f}(s s_2)} = f(a)^{f(s_1) \tilde{f} (s s_2)} =
f(a)^{\tilde{f} (s_1 s s_2)} = f(a)^{f(s_2) \tilde{f} (s s_2)} =
f(a^{s_2})^{\tilde{f} (s s_1)}. 
$$
\end{proof}

\begin{lemma} \label{vir12}  $\tilde{f}$ is a simple, virtual endomorphism.
\end{lemma}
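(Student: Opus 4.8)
My plan is to establish separately that $\tilde f$ is a virtual endomorphism, i.e. a group homomorphism defined on a subgroup of finite index, and that it is simple.

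For the first part I would compute $[\widetilde G:\widetilde H]=[\widetilde A:A_0S^{-1}]\cdot[\widetilde Q:\widetilde Q_0]$. Here $[\widetilde Q:\widetilde Q_0]=p^2$, arising from the generators $x_1^p$ of $Q_0$ and $y_1^p$ of $Q_{1,0}$. Since localization is exact, $\widetilde A/A_0S^{-1}\cong(A/A_0)S^{-1}$; but $A/A_0\cong\mathbb{F}_p$ via the augmentation and every element of $S$ has nonzero image there, because $g(1)\neq 0$ (as $g\notin(x-1)\mathbb{F}_p[x]$), so $(A/A_0)S^{-1}=\mathbb{F}_p$ and $[\widetilde G:\widetilde H]=p^3<\infty$. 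That $\tilde f$ is a homomorphism I would then verify directly: $\tilde f|_{\widetilde Q_0}$ is a homomorphism since $\widetilde Q_0=Q_0\times Q_{1,0}$ is free abelian on the listed generators, whose images lie in the abelian group $\widetilde Q$, and $\tilde f|_{Q_0}=f|_{Q_0}$ is a homomorphism by \cite{Alex-Said}; additivity of $\tilde f$ on $A_0S^{-1}$ and the twisted equivariance $\tilde f(a^q)=\tilde f(a)^{\tilde f(q)}$ reduce, after clearing denominators, to the corresponding properties of the Dantas--Sidki map $f$ and to the previous lemma. I would carry out this bookkeeping but not reproduce it in full.

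For simplicity, let $K\trianglelefteq\widetilde G$ with $K\subseteq\widetilde H$ and $\tilde f(K)\subseteq K$, and set $M=K\cap\widetilde A$. Since $\widetilde H\cap\widetilde A=A_0S^{-1}$ and $\tilde f(A_0S^{-1})\subseteq\widetilde A$, the subgroup $M$ is a $\mathbb{Z}\widetilde Q$-submodule of $A_0S^{-1}$ with $\tilde f(M)\subseteq M$, and — as the elements of $\widetilde Q$ act on $\widetilde A$ as multiplication by a set of ring generators of $\widetilde A$ — $M$ is an ideal of $\widetilde A$. The plan is to show $M=0$, after which one concludes as follows: $\widetilde A$ and $K$ are normal in $\widetilde G$ with trivial intersection, hence commute, so $K$ centralizes $\widetilde A$ and its image in $\widetilde G/\widetilde A\cong\widetilde Q$ lies in $C_{\widetilde Q}(\widetilde A)$. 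But $C_{\widetilde Q}(\widetilde A)=1$: an element $\prod_i x_i^{a_i}\prod_i y_i^{b_i}$ acts on $\widetilde A$ as multiplication by $\prod_i x_i^{a_i}g(x_i)^{b_i}$, and since $g$ is not a monomial it has an irreducible factor distinct from $x$, so unique factorization in $\mathbb{F}_p[x]$, applied inside each tensor factor $\mathbb{F}_p[x_i^{\pm1},g(x_i)^{-1}]$ of $\widetilde A$, forces all $a_i=b_i=0$ whenever that product equals $1$. Hence $K$ embeds in $C_{\widetilde Q}(\widetilde A)=1$, so $K=1$.

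To prove $M=0$ I would reduce to the Dantas--Sidki example. Put $N=M\cap A$. Since $A_0$ is a prime ideal of $A$ meeting $S$ trivially, $A_0S^{-1}\cap A=A_0$, so $N\subseteq A_0$; and since every ideal of the localization $\widetilde A$ is the extension of its contraction, $M=NS^{-1}$, so $M\neq 0$ forces $N\neq 0$. Now $N$ is a $\mathbb{Z}Q$-submodule of $A$, and the key point is that $\tilde f|_{A_0}=f|_{A_0}$ — in the defining formula for $\tilde f$ one may take the auxiliary element $s_1=1$ when $s=1$ — so $f(N)=\tilde f(N)\subseteq\tilde f(M)\cap A\subseteq M\cap A=N$. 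Thus the multiplicative copy $\dot N$ is a normal subgroup of $G_{p,d}$ contained in $H$ with $f(\dot N)\subseteq\dot N$, and simplicity of $f$, proved in \cite{Alex-Said}, yields $\dot N=1$, i.e. $N=0$ and $M=0$. The step I expect to require the most care is exactly this passage between the localized ring and the original module — matching $\tilde f|_{A_0}$ with $f$ and the contraction/extension facts $A_0S^{-1}\cap A=A_0$ and $M=(M\cap A)S^{-1}$ — since it is what makes the already-known simplicity of the Dantas--Sidki endomorphism applicable; the verification that $\tilde f$ is a homomorphism and the faithfulness of $\widetilde Q$ on $\widetilde A$, though longer, are routine.
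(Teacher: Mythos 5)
Your proof is correct and follows essentially the same route as the paper: contract the $\tilde{f}$-invariant ideal $M$ of $\widetilde{A}$ back to $N = M \cap A \subseteq A_0$, apply simplicity of the Dantas--Sidki endomorphism $f$ to conclude $N = 0$ hence $M = 0$, and finish via the faithfulness of the $\widetilde{Q}$-action on $\widetilde{A}$. The additional explicit details you supply---the index $[\widetilde{G}:\widetilde{H}] = p^3$ and the unique-factorization argument that $C_{\widetilde{Q}}(\widetilde{A}) = 1$---are sound and merely make more precise what the paper leaves brief.
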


\begin{proof} Note that since $g \not\in (x-1) \mathbb{F}_p[x]$
we have that $A_0 S^{-1} \not= A S^{-1}$, where $A_0 = Aug(\mathbb{F}_p Q)$, otherwise $(A/ A_0) S^{-1} = 0$ and since $A = \mathbb{F}_p + A_0$ there is $s \in S \cap A_0$. By construction $s = s_1^{z_1} \ldots s_d^{z_d} = g(x_1)^{z_1} \ldots g(x_d)^{z_d}$ and then for $x_1 = x_2 = \ldots = x_d = 1$  we get $s(1, \ldots, 1) = g(1)^{z_1 + \ldots + z_d}  \in \mathbb{F}_p \setminus \{ 0 \}$ but on the other hand since $s \in  A_0$ we have $s(1, \ldots, 1)  = 0$, a contradiction.

Let $K$ be a normal subgroup of $\widetilde{G}$ contained in $\widetilde{H} = A_0 S^{-1} \rtimes \widetilde{Q}_0$  such that $\tilde{f}(K) \subseteq K$. Since $K$ is normal in $\widetilde{G}$ we deduce that $K \cap A_0 S^{-1} = K \cap A S^{-1}$ is an ideal in $A S^{-1}$, hence $K \cap A S^{-1} = I S^{-1}$ for some ideal $I$ in $A$ contained in $A_0$. Let
$$
J =  I S^{-1} \cap A = K \cap A_0S^{-1} \cap A  = K \cap A_0 \subseteq A_0.$$
Then
$$
J S^{-1} = I S^{-1} \ \  \hbox{ and }  \ \  J S^{-1} \cap A = I S^{-1} \cap A = J.
$$
Furthermore
$$f(J) =
\tilde{f}(J) \subseteq \tilde{f}(J S^{-1}) = \tilde{f}(I S^{-1}) \subseteq \tilde{f}(K) \subseteq K \ \ 
\hbox{ and } \ \ 
f(J) \subseteq f(A_0) \subseteq A.
$$
Thus
$$f(J) \subseteq A \cap K = A \cap AS^{-1} \cap K = A \cap (K \cap A S^{-1}) = A \cap I S^{-1} = J.
$$
Since $f$ is simple this implies that $J = 0 $ and so $K \cap A_0 S^{-1} = K \cap A S^{-1} = I S^{-1} = J S^{-1} =  0$.
Finally since every non-trivial element of $\widetilde{Q}$ acts non-trivially on  $A S^{-1}$ we deduce that $\tilde{f}$ is simple.
\end{proof}
Lemma \ref{vir12} implies immediately the following corollary.

\begin{cor} $\widetilde{G}$ is a transitive self-similar group.
\end{cor}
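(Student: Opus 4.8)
The plan is to obtain the statement as an immediate consequence of Lemma~\ref{vir12} together with the general correspondence recorded in the Preliminaries: a simple virtual endomorphism $f:H\to G$ from a finite-index subgroup $H$ produces a faithful, state-closed action of $G$ on $\mathcal{T}_m$, $m=[G:H]$, that is transitive on the first level of the tree, via the recursive description (\ref{form})--(\ref{product}). Lemma~\ref{vir12} already supplies the simplicity of $\tilde f:\widetilde H\to\widetilde G$, so the only remaining point is that $\widetilde H=A_0S^{-1}\rtimes\widetilde Q_0$ has finite index in $\widetilde G=\widetilde A\rtimes\widetilde Q$.

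To check this, first note that $A_0S^{-1}$ is $\widetilde Q$-invariant (it is an $\mathbb{F}_pQS^{-1}$-submodule and each $y_i$ acts by multiplication by the unit $s_i$), so that $\widetilde H$ is genuinely a subgroup and, via the chain $\widetilde H\le \widetilde A\rtimes\widetilde Q_0\le\widetilde G$, one has $[\widetilde G:\widetilde H]=[\widetilde A:A_0S^{-1}]\cdot[\widetilde Q:\widetilde Q_0]$. Since localization is exact and $A=\mathbb{F}_p+A_0$, we get $\widetilde A/A_0S^{-1}\cong(A/A_0)S^{-1}$; and the proof of Lemma~\ref{vir12} shows $S\cap A_0=\emptyset$, so every element of $S$ is already a unit in $A/A_0\cong\mathbb{F}_p$, whence $[\widetilde A:A_0S^{-1}]=p$. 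By the definitions of $Q_0$ and $Q_{1,0}$ we have $[\widetilde Q:\widetilde Q_0]=[Q:Q_0]\,[Q_1:Q_{1,0}]=p\cdot p=p^2$. Therefore $[\widetilde G:\widetilde H]=p^3<\infty$.

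Feeding $\tilde f$ into the recursive construction of the Preliminaries then yields a faithful, state-closed, first-level-transitive action of $\widetilde G$ on $\mathcal{T}_{p^3}$, i.e. $\widetilde G$ is a transitive self-similar group, as claimed. I do not anticipate any real obstacle in this last step: the substantive content — well-definedness of $\tilde f$ and the triviality of every $\tilde f$-invariant $\mathbb{Z}\widetilde Q$-submodule of $\widetilde H$, which ultimately rests on the simplicity of the Dantas--Sidki endomorphism $f$ and on the faithfulness of the $\widetilde Q$-action on $\widetilde A$ — is exactly what Lemma~\ref{vir12} has already established; only the index computation above is new, and it is routine.
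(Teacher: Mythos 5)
Your argument is correct and follows the same route as the paper, which derives the corollary directly from Lemma~\ref{vir12} together with the general correspondence between simple virtual endomorphisms and transitive self-similar actions described in the Preliminaries. The one addition you make — the explicit verification that $[\widetilde G:\widetilde H]=p^3<\infty$, which the paper leaves implicit in the phrase ``virtual endomorphism'' of Lemma~\ref{vir12} — is accurate and a reasonable thing to spell out.
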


\begin{lemma} With the appropriate choice of polynomial $g$ the group $\widetilde{G}$ is finitely presented.
\end{lemma}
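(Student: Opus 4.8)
The plan is to derive finite presentability from the Bieri--Strebel classification (Theorem \ref{cond-met}). To begin, I would fix $g$ of a convenient shape, namely \emph{irreducible of degree $\geq 2$}. Every such $g$ lies in $\mathbb{F}_p[x]\setminus(\cup_{j\geq 0}\mathbb{F}_p x^j\cup(x-1)\mathbb{F}_p[x])$, so Theorem D applies; moreover $g$ is then automatically coprime to $x$, i.e. $g(0)\neq 0$. Since $\widetilde{Q}\cong\mathbb{Z}^{2d}$ is finitely generated and each $y_i$ acts on $\widetilde{A}$ as multiplication by the \emph{unit} $g(x_i)$, the module $\widetilde{A}$ is cyclic over $\mathbb{Z}\widetilde{Q}$ (it equals $\mathbb{Z}\widetilde{Q}\cdot 1$); hence $\widetilde{G}=\widetilde{A}\rtimes\widetilde{Q}$ is a finitely generated metabelian group and Theorem \ref{cond-met} is available. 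Thus it suffices to prove that $\widetilde{A}$ is $2$-tame as a $\mathbb{Z}\widetilde{Q}$-module, equivalently that $\Sigma^c_{\widetilde{A}}(\widetilde{Q})$ contains no antipodal pair $[\chi],[-\chi]$.

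Next I would use the valuation-theoretic description of $\Sigma^c$, just as in the proof of Theorem \ref{metabelian0}. Here $\widetilde{A}=\mathbb{Z}\widetilde{Q}/\mathfrak{p}$ with $\mathfrak{p}=(p,\,y_1-g(x_1),\dots,y_d-g(x_d))$ a prime ideal, a domain with field of fractions $K=\mathbb{F}_p(x_1,\dots,x_d)$; by the link between $\Sigma^c$ and valuation theory \cite{BG}, $\Sigma^c_{\widetilde{A}}(\widetilde{Q})$ is the closure of the set of classes $[\chi_v]$ with $\chi_v\neq 0$, where $v$ runs over the real valuations of $K$ and $\chi_v\in Hom(\widetilde{Q},\mathbb{R})$ is given by $\chi_v(x_i)=v(x_i)$ and $\chi_v(y_i)=v(g(x_i))$. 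The technical heart is the following dictionary, valid for every valuation $v$ of $K$ and every $i$: (a) if $v(x_i)\geq 0$ then $g(x_i)$ lies in the valuation ring, so $v(g(x_i))\geq 0$; (b) if $v(x_i)<0$ then $j\mapsto j\,v(x_i)$ is strictly decreasing, so the top-degree monomial is the unique monomial of $g(x_i)$ of smallest value and $v(g(x_i))=\deg(g)\cdot v(x_i)<0$; (c) if $v(x_i)>0$ then the constant term of $g$ is a nonzero scalar of value $0$ (here we use that $x$ does not divide $g$) while every other monomial of $g(x_i)$ has positive value, so $v(g(x_i))=0$.

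Granting this, ruling out antipodal pairs is a short case analysis. Suppose $v,w$ are valuations of $K$ and $\lambda>0$ with $\chi_w=-\lambda\chi_v\neq 0$, so $w(x_i)=-\lambda v(x_i)$ and $w(g(x_i))=-\lambda v(g(x_i))$ for all $i$. If $v(x_i)>0$ for some $i$, then (c) gives $v(g(x_i))=0$, hence $w(g(x_i))=0$; but $w(x_i)<0$, so (b) gives $w(g(x_i))=\deg(g)\,w(x_i)<0$, a contradiction. So $v(x_i)\leq 0$ for all $i$, and running the same argument with the roles of $v$ and $w$ exchanged (and $\lambda$ replaced by $1/\lambda$) gives $v(x_i)\geq 0$ for all $i$; hence $v(x_i)=0$ for every $i$. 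Then by (a) each $\chi_v(y_i)=v(g(x_i))\geq 0$, and since $\chi_v\neq 0$ we have $v(g(x_i))>0$ for some $i$; but then $w(g(x_i))=-\lambda v(g(x_i))<0$, whereas $w(x_i)=0$ forces $w(g(x_i))\geq 0$ by (a), again a contradiction. Hence $\Sigma^c_{\widetilde{A}}(\widetilde{Q})$ has no antipodal pair, $\widetilde{A}$ is $2$-tame, and Theorem \ref{cond-met} yields that $\widetilde{G}$ is finitely presented.

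I expect the real obstacle to lie in the two preliminary reductions rather than in this computation: namely confirming that $\widetilde{A}$ is genuinely a finitely generated (indeed cyclic) $\mathbb{Z}\widetilde{Q}$-module, so that Theorem \ref{cond-met} applies, and isolating the precise valuation-theoretic description of $\Sigma^c_{\widetilde{A}}(\widetilde{Q})$ so that the localisation already built into $\widetilde{A}$ introduces no extra classes; once these are secured, (a)--(c) finish everything. I would also record, in agreement with the closing remark of the introduction though not needed here, that $\widetilde{A}$ is never $3$-tame: for each $i$, the valuations $v_{x_i}$, $v_{g(x_i)}$ and the one with $v(x_i)=-1$, $v(x_j)=0$ $(j\neq i)$ give $\chi$-vectors $e_{x_i}$, $e_{y_i}$ and $-e_{x_i}-\deg(g)\,e_{y_i}$ in the dual basis of $Hom(\widetilde{Q},\mathbb{R})$, whose positive span contains $0$.
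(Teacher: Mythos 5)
Your argument is correct and follows the same route as the paper: choose $g$ irreducible coprime to $x$ and $x-1$, observe $\widetilde A$ is a cyclic $\mathbb{Z}\widetilde Q$-module so Theorem \ref{cond-met} applies, and verify $2$-tameness — which the paper simply asserts is ``easy to see'' and which you establish via the Bieri--Groves valuation description of $\Sigma^c$ and the case analysis (a)--(c). Your write-up supplies the details the paper omits, and the concluding observation that $\widetilde A$ is never $3$-tame matches the remark after Theorem D.
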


\begin{proof}  It suffices to choose $g$ an irreducible polynomial coprime to $x$ such that $g$ is not divisible by $x-1$. It is easy to see that in this case $\widetilde{A}$ is 2-tame as $\mathbb{Z} \widetilde{Q}$-module  and then we can apply Theorem \ref{cond-met}.
\end{proof}

\end{document}